\title[Cohomologies of certain solvable groups]
{Central theorems for cohomologies of certain solvable groups}
\author{Hisashi Kasuya}
\theoremstyle{plain}
\theoremstyle{plain}
\theoremstyle{plain}
\theoremstyle{plain}
\newtheorem{theorem}{Theorem}[section] 
\theoremstyle{remark}
\newtheorem{remark}{Remark}
\theoremstyle{remark}
\newtheorem{Important note}[theorem]{Important note}
\theoremstyle{Main result}
\newtheorem{main result}{Main result}
\theoremstyle{lemma}
\newtheorem{lemma}[theorem]{Lemma}
\theoremstyle{definition}
\newtheorem{definition}[theorem]{Definition}
\theoremstyle{proposition}
\newtheorem{proposition}[theorem]{Proposition}
\theoremstyle{corollary}
\newtheorem{corollary}[theorem]{Corollary}
\theoremstyle{remark}
\address[Hisashi Kasuya]{Department of Mathematics, Tokyo Institute of Technology, 1-12-1, O-okayama, Meguro, Tokyo 152-8551, Japan}
\email{kasuya@math.titech.ac.jp}
\keywords{Group cohomology of torsion-free virtually polycyclic group, Continuous cohomology of simply connected solvable Lie group, Rational cohomology of algebraic group, de Rham cohomology of solvmanifold}
\subjclass[2010]{Primary:20F16, 20G10 ,20J06, 22E41 Secondary:22E25, 17B56, 57T15}
\newcommand{\C}{\mathbb{C}}
\newcommand{\R}{\mathbb{R}}
\newcommand{\Q}{\mathbb{Q}}
\newcommand{\Z}{\mathbb{Z}}
\newcommand{\g}{\frak{g}}
\newcommand{\n}{\frak{n}}
\begin{document} 

\maketitle
\begin{abstract}
We show that the group cohomology of torsion-free virtually polycyclic groups and the continuous cohomology of simply connected solvable Lie groups can be computed by the rational cohomology of algebraic groups.
Our results are generalizations of certain results on the cohomology of solvmanifolds and infra-solvmanifolds.
Moreover as an application of our results, we give a new proof of the  surprising   cohomology vanishing theorem given by Dekimpe-Igodt.
\end{abstract}

\section{Main theorems}
For a set $M$ and a group $G$, we suppose that  $G$ acts on $M$.
Then  we denote by $M^{G}$ the set of the $G$-invariant elements.
Let $k$ be a sub-field of $\C$. 
A $k$-defined algebraic group $\mathcal G$ is a Zariski-closed subgroup of $GL_{n}(\C)$ which is defined by polynomials with coefficients in $k$.
For a $k$-defined algebraic group $\mathcal G$,  we denote by ${\mathcal  U}({\mathcal G})$ the unipotent  radical of  $\mathcal G$ and we denote by ${\mathcal G}(k)$ the group of $k$-points.
In this paper,   a  $k$-vector space $V$ is a vector space over $\C$ with a $k$-structure $V_{k}$ as in \cite{Bor}.
Let $V$ be a  $k$-vector space.
We consider the following cohomologies.
\begin{enumerate}

\item For a group $\Gamma$, assuming that $V_{k}$ is a $\Gamma$-module, we define the group cohomology $H^{\ast}(\Gamma,V_{k})={\rm Ext}^{\ast}_{\Gamma}(k,V_{k})$ as in \cite{Bro}.

\item For a connected Lie  group $G$, assuming  that $k= \R$ or $\C$  and $V_{k}$ is a topological $G$-module ,
we define the continuous cohomology $H^{\ast}_{c}(G,V_{k})={\rm Ext}^{\ast}_{G}(k,V_{k})$ as in \cite[Section IX]{BW}.

\item For a $k$-defined algebraic  group $\mathcal G$, assuming that $V$ is a $k$-rational  $\mathcal G$-module, we define the rational cohomology $H^{\ast}_{r}(\mathcal G,V_{k})={\rm Ext}^{\ast}_{\mathcal G(k)}(k,V_{k})$ as in  \cite{Hoc} and \cite{Jan}.

\end{enumerate}
We suppose that we have an inclusion $\Gamma\subset {\mathcal G}(k)$ (resp. $G\subset {\mathcal G}(k)$) as a subgroup (resp. Lie-subgroup).
Let $V$ be a $k$-rational $\mathcal G$-module.
 Then the inclusion induces  the homomorphism $H^{\ast}_{r}({\mathcal G},V_{k})\to H^{\ast}(\Gamma,V_{k})$ (resp. $H^{\ast}_{r}({\mathcal G},V_{k})\to H^{\ast}_{c}(G,V_{k})$).

A group $\Gamma$ is polycyclic if it admits a sequence 
\[\Gamma=\Gamma_{0}\supset \Gamma_{1}\supset \cdot \cdot \cdot \supset \Gamma_{k}=\{ e \}\]
of subgroups such that each $\Gamma_{i}$ is normal in $\Gamma_{i-1}$ and $\Gamma_{i-1}/\Gamma_{i}$ is cyclic.
We denote ${\rm rank}\,\Gamma=\sum_{i=1}^{i=k}{\rm rank}\, \Gamma_{i-1}/\Gamma_{i}$.

In this paper we study the group cohomology $H^{\ast}(\Gamma,V_{k})$ of a torsion-free virtually polycyclic group $\Gamma$, the continuous cohomology $H^{\ast}_{c}(G,V)$ of a simply connected solvable Lie group $G$ and the rational cohomology $H_{r}^{\ast}(\mathcal G,V_{k})$ of an algebraic group $\mathcal G$ which contains $\Gamma$ or $G$ as a Zariski-dense subgroup.
We have:
\begin{lemma}{\rm(\cite[Lemma 4.36.]{R})}\label{hut}
Let $\Gamma$ (resp. $G$) be a torsion-free virtually polycyclic group (simply connected solvable Lie group).
 For a finite-dimensional representation $\rho:\Gamma\to GL_{n}(\C)$ (resp. $\rho:G\to GL_{n}(\C)$), denoting by $\mathcal G$ the Zariski-closure of $\rho(\Gamma)$ (resp. $\rho(G)$)  in $ GL_{n}(\C)$, we have $\dim \mathcal U(\mathcal G)\le {\rm rank}\, \Gamma$ (resp. $\dim \mathcal U(\mathcal G)\le\dim G$),
where we denote by ${\mathcal  U}({\mathcal G})$ the unipotent radical of  $\mathcal G$.
\end{lemma}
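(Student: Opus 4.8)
The plan is to prove both assertions simultaneously by induction, on $r:={\rm rank}\,\Gamma$ in the discrete case and on $d:=\dim G$ in the Lie case; throughout write $\mathcal G=\overline{\rho(\Gamma)}$ (resp. $\overline{\rho(G)}$).

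\emph{Reductions and base case.} In the discrete case I would first replace $\Gamma$ by a finite-index poly-(infinite cyclic) subgroup, which is classical to exist for a virtually polycyclic group: this changes neither ${\rm rank}\,\Gamma$ (a finite-index invariant) nor $\mathcal G^{0}$ (a closed subgroup of finite index contains the identity component), hence not $\mathcal U(\mathcal G)=\mathcal U(\mathcal G^{0})$. So I may assume $\Gamma$ has a normal subgroup $N$ with $\Gamma/N\cong\Z$ and ${\rm rank}\,N=r-1$, and, $\Z$ being free, $\Gamma=N\rtimes\langle t\rangle$ for a suitable $t$. In the Lie case, $G$ is simply connected solvable and, if nontrivial, $G/[G,G]\cong\R^{k}$ with $k\ge1$, so there is a surjection $G\to\R$; its kernel $N$ is, by the homotopy exact sequence of $N\to G\to\R$, connected and simply connected, hence a simply connected solvable Lie group with $\dim N=d-1$, and choosing $X\in\g$ mapping to a generator of ${\rm Lie}(\R)$ gives a splitting $G=N\rtimes\R$. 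The base cases $r=0$ (resp. $d=0$) are immediate, since then the group, hence $\mathcal G$, is trivial.

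\emph{Inductive step.} Set $\mathcal H:=\overline{\rho(N)}$. Since $\rho(N)$ is normal in $\rho(\Gamma)$ (resp. $\rho(G)$) and conjugation is a Zariski automorphism, $\mathcal H$ is normalized by a Zariski-dense subgroup, so $\mathcal H\triangleleft\mathcal G$; by induction $\dim\mathcal U(\mathcal H)\le r-1$ (resp. $\le d-1$). The unipotent radical $\mathcal U(\mathcal H)$ is characteristic in $\mathcal H\triangleleft\mathcal G$, hence a connected normal unipotent subgroup of $\mathcal G$, so $\mathcal U(\mathcal H)\subseteq\mathcal U(\mathcal G)$. Now project $q:\mathcal G\to\bar{\mathcal G}:=\mathcal G/\mathcal U(\mathcal H)$. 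Then $q(\mathcal U(\mathcal G))$ is connected, unipotent and normal, hence $q(\mathcal U(\mathcal G))\subseteq\mathcal U(\bar{\mathcal G})$, while $\ker(q|_{\mathcal U(\mathcal G)})=\mathcal U(\mathcal G)\cap\mathcal U(\mathcal H)=\mathcal U(\mathcal H)$; therefore $\dim\mathcal U(\mathcal G)=\dim\mathcal U(\mathcal H)+\dim q(\mathcal U(\mathcal G))\le\dim\mathcal U(\mathcal H)+\dim\mathcal U(\bar{\mathcal G})$. It remains to show $\dim\mathcal U(\bar{\mathcal G})\le1$. In $\bar{\mathcal G}$ the subgroup $\mathcal H/\mathcal U(\mathcal H)$ is reductive and normal, and in characteristic $0$ a reductive group contains no nontrivial connected normal unipotent subgroup, so $\mathcal U(\bar{\mathcal G})\cap\big(\mathcal H/\mathcal U(\mathcal H)\big)=\{1\}$; hence $\bar{\mathcal G}\to\mathcal G/\mathcal H$ embeds $\mathcal U(\bar{\mathcal G})$ into $\mathcal G/\mathcal H$ as a connected normal unipotent subgroup, i.e. inside $\mathcal U(\mathcal G/\mathcal H)$. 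But $\mathcal G/\mathcal H$ is the Zariski-closure of the cyclic group generated by the image of $\rho(t)$ (resp. of the one-parameter subgroup $u\mapsto\rho(\exp(uX))$), and the Zariski-closure of such a group decomposes as the direct product of a diagonalizable group with a unipotent group of dimension $\le1$ (the closures of the semisimple and unipotent parts in the Jordan decomposition); so $\dim\mathcal U(\mathcal G/\mathcal H)\le1$. Combining, $\dim\mathcal U(\mathcal G)\le\dim\mathcal U(\mathcal H)+1\le r$ (resp. $\le d$), which completes the induction.

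\emph{Main obstacle.} The delicate point is the control of the torus part: a naive dimension estimate fails because $\mathcal G/\mathcal H$, the Zariski-closure of one cyclic group (resp. one one-parameter subgroup), can be two-dimensional, and $\mathcal H$ itself carries a maximal torus that a priori might enlarge the unipotent radical. The device that resolves this is to quotient out $\mathcal U(\mathcal H)$ first, so that the new unipotent radical is detected faithfully inside $\mathcal G/\mathcal H$, where only the at most one-dimensional additive part of the closure of a cyclic or one-parameter subgroup can survive. The remaining ingredients—existence of finite-index poly-$\Z$ subgroups and finite-index invariance of rank in the discrete case, simple connectedness of the kernel $N$ in the Lie case, the unipotent radical being characteristic, and the structure of Zariski-closures of cyclic or one-parameter subgroups—are standard.
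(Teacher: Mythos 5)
Your argument is correct. Note that the paper itself gives no proof of this lemma --- it is quoted verbatim from Raghunathan \cite[Lemma 4.36]{R} --- so there is nothing internal to compare against; your induction on ${\rm rank}\,\Gamma$ (resp. $\dim G$), reducing to a normal subgroup with quotient $\Z$ (resp. $\R$), passing to $\bar{\mathcal G}=\mathcal G/\mathcal U(\mathcal H)$ so that $\mathcal U(\bar{\mathcal G})$ injects into $\mathcal G/\mathcal H$, and then using that the Zariski closure of a cyclic or one-parameter group has unipotent part of dimension at most one, is essentially the classical proof found in \cite{R}. All the steps check out (including the implicit use of the fact that, in characteristic zero, a closed subgroup consisting of unipotent elements is connected, which is what makes $\mathcal U(\bar{\mathcal G})\cap(\mathcal H/\mathcal U(\mathcal H))$ trivial), so there is nothing to add.
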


By this lemma, we consider the following definition.
\begin{definition}
Let $\Gamma$ (resp. $G$) be a torsion-free virtually polycyclic group  (resp. simply connected solvable Lie group). 
 For an algebraic group $\mathcal G$, a  representation $\rho:\Gamma\to \mathcal G$ (resp. $\rho:G\to \mathcal G$)   is called {\em full} if the image $\rho(\Gamma)$ (resp. $\rho(G)$) is Zariski-dense in $ \mathcal G$ and we have $\dim \mathcal U(\mathcal G)= {\rm rank}\, \Gamma$ (resp. $\dim \mathcal U(\mathcal G)=\dim G$).
\end{definition}

In this paper, we prove the following theorem.
\begin{theorem}\label{poly1}
Let $\Gamma$ be a torsion-free virtually polycyclic group.
We suppose that for a subfield $k\subset \C$ we have a $k$-defined algebraic group $\mathcal G$ and an inclusion $\Gamma\subset {\mathcal G}(k)$ which is a full representation.
Let $V$ be a $k$-rational $\mathcal G$-module.
Then  the inclusion $\Gamma\subset {\mathcal G}(k)$ induces a cohomology isomorphism
\[H^{\ast}_{r}({\mathcal G},V_{k})\cong H^{\ast}(\Gamma,V_{k}).
\]
\end{theorem}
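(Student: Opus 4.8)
The plan is to prove the isomorphism by induction on ${\rm rank}\,\Gamma=\dim\mathcal{U}(\mathcal{G})$ (these coincide by fullness), after normalising the situation. Since $\mathbb{C}$ is faithfully flat over $k$ and both $H^{\ast}_{r}(\mathcal{G},-)$ and $H^{\ast}(\Gamma,-)$ (the latter because a torsion-free virtually polycyclic group is of type $FP_{\infty}$) commute with $-\otimes_{k}\mathbb{C}$ compatibly with the restriction map, we may assume $k=\mathbb{C}$. Next, choose a finite-index normal subgroup $\Delta\subset\Gamma$ contained in $\Gamma\cap\mathcal{G}^{\circ}$ which admits a subnormal series with infinite cyclic quotients (such $\Delta$ exists, e.g.\ as the normal core of a finite-index poly-$\mathbb{Z}$ subgroup). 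Then $\Delta$ is Zariski-dense in $\mathcal{G}^{\circ}$ and again full, $H^{\ast}(\Gamma,V)=H^{\ast}(\Delta,V)^{\Gamma/\Delta}$ and $H^{\ast}_{r}(\mathcal{G},V)=H^{\ast}_{r}(\mathcal{G}^{\circ},V)^{\mathcal{G}/\mathcal{G}^{\circ}}$, and the restriction map is equivariant for these finite group actions because restriction commutes with conjugation and the image of $\Gamma$ in $\mathcal{G}/\mathcal{G}^{\circ}$ is everything. So we may assume in addition that $\mathcal{G}$ is connected and $\Gamma$ carries a subnormal series with infinite cyclic quotients.

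If ${\rm rank}\,\Gamma=0$, then $\Gamma$ and $\mathcal{G}$ are trivial. Suppose ${\rm rank}\,\Gamma\ge1$ and let $\Gamma'\subset\Gamma$ be the first proper term of such a series, so $\Gamma/\Gamma'\cong\mathbb{Z}$, $\Gamma'$ is normal in $\Gamma$, torsion-free, again carries such a series, and ${\rm rank}\,\Gamma'={\rm rank}\,\Gamma-1$. Put $\mathcal{G}'=\overline{\Gamma'}$, a normal subgroup of $\mathcal{G}$, and $\mathcal{Q}=\mathcal{G}/\mathcal{G}'$; since $\mathcal{G}$ is connected $\mathcal{Q}$ is connected, and since $\mathbb{Z}\cong\Gamma/\Gamma'$ is Zariski-dense in $\mathcal{Q}$ the group $\mathcal{Q}$ is commutative, hence $\mathcal{Q}\cong\mathbb{G}_{a}^{\,d}\times\mathcal{T}$ with $\mathcal{T}$ a torus and $d=\dim\mathcal{U}(\mathcal{Q})$. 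The unipotent radical of $\mathcal{G}$ intersects $\mathcal{G}'$ in a normal unipotent subgroup of $\mathcal{G}'$ and projects to a normal unipotent subgroup of $\mathcal{Q}$, whence $\dim\mathcal{U}(\mathcal{G})\le\dim\mathcal{U}(\mathcal{G}')+\dim\mathcal{U}(\mathcal{Q})$; combined with Lemma~\ref{hut} applied to $\Gamma'\subset\mathcal{G}'$ and to $\mathbb{Z}\subset\mathcal{Q}$, and with fullness of $\Gamma\subset\mathcal{G}$, this chain of inequalities collapses, forcing $\dim\mathcal{U}(\mathcal{G}')={\rm rank}\,\Gamma'$ — so $\Gamma'\subset\mathcal{G}'$ is full — and $d=1$, i.e.\ $\mathcal{Q}\cong\mathbb{G}_{a}\times\mathcal{T}$ with a one-dimensional $\mathbb{G}_{a}$.

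The inclusions $\Gamma'\subset\mathcal{G}'$ and $\mathbb{Z}\subset\mathcal{Q}$ form a morphism of group extensions, so restriction gives a morphism from the Hochschild--Serre spectral sequence of $1\to\mathcal{G}'\to\mathcal{G}\to\mathcal{Q}\to1$ for rational cohomology to that of $1\to\Gamma'\to\Gamma\to\mathbb{Z}\to1$ for group cohomology. On the algebraic side $H^{p}_{r}(\mathcal{Q},-)=0$ for $p\ge2$: via Hochschild--Serre for $1\to\mathbb{G}_{a}\to\mathcal{Q}\to\mathcal{T}\to1$, a torus over $\mathbb{C}$ is linearly reductive so $H^{>0}_{r}(\mathcal{T},-)=0$, and $H^{\ge2}_{r}(\mathbb{G}_{a},-)=0$ in characteristic zero; on the group side $H^{p}(\mathbb{Z},-)=0$ for $p\ge2$. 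Both spectral sequences therefore degenerate into short exact sequences $0\to H^{1}(-,H^{n-1})\to H^{n}\to H^{0}(-,H^{n})\to0$. By the inductive hypothesis, restriction induces an isomorphism $H^{q}_{r}(\mathcal{G}',V)\xrightarrow{\ \sim\ }H^{q}(\Gamma',V)$ compatible with the $\mathcal{Q}$- and $\mathbb{Z}$-module structures (via $\mathbb{Z}\to\mathcal{Q}$), again because restriction commutes with conjugation. Granting that restriction induces an isomorphism $H^{\ast}_{r}(\mathcal{Q},W)\cong H^{\ast}(\mathbb{Z},W)$ for every rational $\mathcal{Q}$-module $W$ — which is simultaneously the base case ${\rm rank}\,\Gamma=1$ — the outer terms of the two short exact sequences are matched by restriction and the five lemma completes the induction.

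It remains to analyse $\mathbb{Z}\subset\mathcal{Q}\cong\mathbb{G}_{a}\times\mathcal{T}$. A generator $t$ of $\mathbb{Z}$ maps to a pair $(a,s)\in\mathbb{G}_{a}\times\mathcal{T}$ with $a\ne0$ — this is the single point at which fullness is used, the image being Zariski-dense in the \emph{nontrivial} factor $\mathbb{G}_{a}$ — and writing the $\mathbb{G}_{a}$-action on $W$ as $x\mapsto\exp(xN)$ with $N$ nilpotent, and noting $\mathcal{T}$ acts semisimply, $t$ acts on $W$ by the commuting product $\exp(aN)\,\rho(s)$, which is its Jordan decomposition. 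Both cohomologies vanish above degree $1$; decomposing $W$ into $\rho(s)$-eigenspaces, using $\mathcal{T}=\overline{\langle s\rangle}$ (so $W^{\mathcal{T}}$ is the eigenvalue-$1$ subspace) and $\ker(\exp(aN)-1)=\ker N$, $\operatorname{coker}(\exp(aN)-1)=\operatorname{coker}N$ for $a\ne0$, one checks $H^{0}_{r}(\mathcal{Q},W)=\ker N\cap W^{\mathcal{T}}=H^{0}(\mathbb{Z},W)$, and that $H^{1}_{r}(\mathcal{Q},W)=(W/NW)^{\mathcal{T}}$ and $H^{1}(\mathbb{Z},W)=W/(\exp(aN)\rho(s)-1)W$ are both identified with $W^{\mathcal{T}}/NW^{\mathcal{T}}$, with restriction realising this identification up to a nonzero scalar. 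The main obstacle is precisely this rank-one comparison: for a merely Zariski-dense subgroup the restriction map is \emph{not} an isomorphism in general — this is the phenomenon behind Mostow's theorem for solvmanifolds — and fullness is exactly the hypothesis that repairs it, by ensuring that the $\mathbb{Z}$-generators arising in the induction have nontrivial unipotent part. The surrounding work is bookkeeping: that fullness propagates to $\Gamma'\subset\mathcal{G}'$ and $\mathbb{Z}\subset\mathcal{Q}$ (the dimension count in the second paragraph), and that the transfer argument and the comparison of Hochschild--Serre spectral sequences are strictly natural with respect to the restriction maps.
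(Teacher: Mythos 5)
Your proposal is correct and follows essentially the same route as the paper: induction on ${\rm rank}\,\Gamma$, a dimension count with Lemma \ref{hut} to propagate fullness to the kernel and the cyclic quotient, a comparison of the Hochschild--Serre spectral sequences of the group extension and the algebraic group extension, a rank-one base case where fullness forces the generator to have nontrivial unipotent part, and a passage to invariants under the finite quotient (which you perform at the start rather than at the end). The only substantive points the paper treats that you elide are: (i) the verification that $\Gamma\cap\overline{\Gamma'}=\Gamma'$, so that $\Gamma/\Gamma'\cong\mathbb{Z}$ genuinely injects into $\mathcal{Q}$ and your rank-one lemma applies --- this is the paper's ``$\Delta=\Gamma''$'' step, and it does follow from your own dimension count since a Zariski-dense cyclic image in a group with one-dimensional unipotent radical must be infinite; and (ii) the reduction to finite-dimensional modules via direct limits, which is needed in the rank-one computation because $V$ and the coefficient modules $H^{q}_{r}(\mathcal{G}',V)$ appearing at $E_{2}$ need not be finite-dimensional.
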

We also  prove the following theorem.
\begin{theorem}\label{lie1}
Let $G$ be a  simply connected solvable Lie group.
We suppose that for a field $k= \R$ or $\C$ we have a $k$-defined algebraic group $\mathcal G$ and an inclusion $G\subset {\mathcal G}(k)$ which is a full representation.
Let $V$ be a  rational $\mathcal G$-module.
Then  the inclusion $G\subset {\mathcal G}(k)$ induces a cohomology isomorphism
\[H^{\ast}_{r}({\mathcal G},V_{k})\cong H^{\ast}_{c}(G,V_{k}).
\]
\end{theorem}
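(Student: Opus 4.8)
\emph{Sketch of an approach.} The plan is to replace both cohomologies by finite-dimensional cochain complexes and to check that the comparison map between them is a quasi-isomorphism. On the Lie-group side, since $G$ is simply connected solvable it is diffeomorphic to a Euclidean space, so its maximal compact subgroup is trivial and van Est's theorem (as in \cite[Section IX]{BW}) gives $H^{\ast}_{c}(G,V_{k})\cong H^{\ast}(\g,V_{k})$, the cohomology of the Chevalley--Eilenberg complex $\bigwedge^{\bullet}\g^{\ast}\otimes V_{k}$ with $\g=\mathrm{Lie}(G)$. On the algebraic side, $\mathcal G$ is the Zariski closure of the connected solvable group $\rho(G)$, hence connected solvable, so $\mathcal G=\mathcal U(\mathcal G)\rtimes\mathcal T$ for a maximal torus $\mathcal T$ defined over $k$; write $\frak u=\mathrm{Lie}(\mathcal U(\mathcal G))$ and $\frak t=\mathrm{Lie}(\mathcal T)$. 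By Hochschild's description of the rational cohomology of a unipotent group in characteristic zero, $H^{\ast}_{r}(\mathcal U(\mathcal G),V_{k})\cong H^{\ast}(\frak u,V_{k})$, and since a torus is linearly reductive ($H^{>0}_{r}(\mathcal T,-)=0$ and the functor of $\mathcal T$-invariants is exact) the Hochschild--Serre spectral sequence of $1\to\mathcal U(\mathcal G)\to\mathcal G\to\mathcal T\to 1$ collapses and gives
\[H^{\ast}_{r}(\mathcal G,V_{k})\cong H^{\ast}(\frak u,V_{k})^{\mathcal T}=H^{\ast}\!\left(\Big(\bigwedge^{\bullet}\frak u^{\ast}\otimes V_{k}\Big)^{\mathcal T}\right),\]
i.e.\ the relative Lie-algebra cohomology $H^{\ast}(\mathrm{Lie}(\mathcal G),\frak t;V_{k})$; under these identifications the map of the statement is restriction along $d\rho\colon\g\to\mathrm{Lie}(\mathcal G)$ followed by the projection $\mathrm{Lie}(\mathcal G)\to\mathrm{Lie}(\mathcal G)/\frak t\cong\frak u$.

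It remains to compare $\big(\bigwedge^{\bullet}\frak u^{\ast}\otimes V_{k}\big)^{\mathcal T}$ with $\bigwedge^{\bullet}\g^{\ast}\otimes V_{k}$. Here fullness of $\rho$ is the crucial ingredient: combined with Zariski density it forces the composite $\g\xrightarrow{d\rho}\mathrm{Lie}(\mathcal G)\to\mathrm{Lie}(\mathcal G)/\frak t\cong\frak u$ to be a linear isomorphism — this is the equality case of the estimate in Lemma~\ref{hut}, whose content is exactly that $\dim\mathcal U(\mathcal G)$ equals the rank of this map (infinitesimally it is the differential of the Mostow-type diffeomorphism $G\xrightarrow{\sim}\mathcal U(\mathcal G)(\R)$). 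Transporting the bracket of $\g$ to $\frak u$ along this isomorphism, the two Lie-algebra structures on the common underlying space differ only by a term valued in the $\frak t$-action on $\frak u$, which is precisely Mostow's splitting of the adjoint action of $\g/\n$ (with $\n$ the nilradical) into a semisimple part — recorded by $\mathcal T$ — and a unipotent one. Filtering both complexes by $\n$ and applying Hochschild--Serre once more reduces the comparison to the induced actions of the abelian quotient on $H^{\ast}(\n,V_{k})$, and the key point — a version for the coefficient system $V_{k}$ of the Nomizu--Hattori--Dixmier results on cohomology of solvmanifolds — is that the relevant action of $G$ on $H^{\ast}(\n,V_{k})$ is semisimple and factors through the $\mathcal T$-action via the Zariski-dense homomorphism $\g/\n\to\frak t$, the unipotent part acting trivially on cohomology. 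This yields the desired quasi-isomorphism, compatible with the map of the previous paragraph, and hence the isomorphism $H^{\ast}_{r}(\mathcal G,V_{k})\cong H^{\ast}_{c}(G,V_{k})$.

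I expect the main obstacle to be this last step: proving that the unipotent (``shearing'') part of the adjoint action acts trivially on $H^{\ast}(\n,V_{k})$, so that the comparison is genuinely a quasi-isomorphism and not merely a map, together with the bookkeeping when $k=\R$ — there $\mathcal T$ need not be split and $\rho(G)$ is only Zariski-dense, not open, in $\mathcal G(\R)$, so the weight-space arguments have to be carried out over $\Q$ or $\R$ rather than over $\C$.
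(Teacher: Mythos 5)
Your reduction of both sides to finite--dimensional cochain complexes is sound and genuinely different from the paper's argument: the paper proves Theorem \ref{lie1} by induction on $\dim G$, choosing a normal subgroup $G''$ with $G/G''\cong\R$, identifying $G\cap\mathcal H$ with $G''$ via the equality case of Lemma \ref{hut}, and comparing the Hochschild--Serre spectral sequences of the Lie--group and algebraic--group extensions; it never passes through the Chevalley--Eilenberg complex. Your use of van Est and of Theorem \ref{hocun} is fine (modulo a direct--limit argument for infinite--dimensional rational $V$, as in Lemma \ref{ra1}), and the statement you reduce to, $H^{\ast}(\g,V_{k})\cong H^{\ast}(\frak u_{k},V_{k})^{\mathcal T}$, is indeed true --- it is Corollary \ref{pppis}.

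However, the step you flag as the main obstacle is not merely difficult: it is false as stated. Take $\g=\R X\ltimes_{A}\R^{2}$ with $[X,v]=Av$ and $A=\left(\begin{smallmatrix}1&1\\0&1\end{smallmatrix}\right)$, and $V=\R$ the trivial module. Then $\n=\R^{2}$, and the abelian quotient $\g/\n=\R X$ acts on $H^{1}(\n,\R)=(\R^{2})^{\ast}$ by $-A^{T}$, which has a nontrivial Jordan block: the unipotent (shearing) part of the adjoint action acts \emph{non}trivially on $H^{\ast}(\n,V_{k})$, and the action is not semisimple and does not factor through $\mathcal T$. So the proposed reduction ``filter by $\n$ and use semisimplicity of the $G$-action on $H^{\ast}(\n,V_{k})$'' already breaks down for a three--dimensional completely solvable $\g$ with trivial coefficients. (The theorem itself survives this example --- both sides have Betti numbers $1,1,0,0$ --- but not by your mechanism.) A correct way to finish along your lines is different: by Zariski density of $G$ in $\mathcal G$, the subspace $\g\otimes\C$ is an \emph{ideal} of ${\rm Lie}(\mathcal G)=\frak t\ltimes\frak u$, and the equality case of Lemma \ref{hut} makes it a complement to $\frak t$; since inner derivations act on $H^{\ast}(\g\otimes\C,V)$ by nullhomotopic operators, the connected group $G$ acts trivially there, hence by Zariski density so does $\mathcal G$ and in particular $\mathcal T$. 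Therefore the two Hochschild--Serre spectral sequences of the reductive subalgebra $\frak t$ (for the two complements $\frak u$ and $\g\otimes\C$) both degenerate and identify $H^{\ast}(\frak u,V)^{\frak t}$ and $H^{\ast}(\g\otimes\C,V)=H^{\ast}(\g\otimes\C,V)^{\frak t}$ with the relative cohomology $H^{\ast}({\rm Lie}(\mathcal G),\frak t;V)$. As written, though, your argument has a genuine gap at its central step.
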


By Theorem \ref{poly1}, for any torsion-free polycyclic group $\Gamma$ and any finite-dimensional representation $\rho:\Gamma\to GL(V_{k}) $, we can describe the group  cohomology $H^{\ast}(\Gamma,V_{k})$ as a subspace of the cohomology of certain nilpotent Lie algebra determined by $\Gamma$ (Corollary \ref{pppis}, see also Theorem \ref{BEXT}).
The above main theorems are generalizations of the classical  results on the cohomology of nilmanifolds and solvmanifolds given by Nomizu and Mostow and Baues' recent result on the cohomology of compact aspherical manifolds with torsion-free virtually polycyclic fundamental groups. 
We see that the theorems imply such results as corollaries
(Section \ref{cor}).

Moreover, as an application of our theorem, we will give a new proof of the  surprising   cohomology vanishing theorem given by Dekimpe-Igodt in \cite{Deki}
(Section \ref{DDEKimp}).
For constructing bounded polynomial  crystallographic actions of  polycyclic groups Dekimpe and Igodt prove the vanishing of certain cohomology.
 By our proof, we can regard such vanishing as a conclusion of a fundamental lemma on  cohomology of algebraic groups.

 {\bf  Acknowledgements.} 

The author  would  like to thank   Karel Dekimpe for  helpful comments.
This research is supported by JSPS Grant-in-Aid for Research Activity start-up.

\section{Proof of Main Theorems}\label{prf}

\subsection{First step}

\begin{lemma}\label{ra1}
Let $\Gamma=\Z$.
We suppose that for a subfield $k\subset \C$ we have a $k$-defined algebraic group $\mathcal G$ and an inclusion $\Gamma\subset {\mathcal G}(k)$ which is a full representation.
Let $V$ be a $k$-rational $\mathcal G$-module.
Then  the inclusion $\Gamma\subset {\mathcal G}(k)$ induces a cohomology isomorphism
\[H^{\ast}_{r}({\mathcal G},V_{k})\cong H^{\ast}(\Gamma,V_{k}).
\]
\end{lemma}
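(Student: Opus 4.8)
The plan is to determine the structure of $\mathcal{G}$, reduce both cohomologies to a weight-space decomposition over the diagonalizable part of $\mathcal{G}$, and then compare the two sides on the trivial weight. Since $\Gamma=\mathbb{Z}$ is cyclic, say $\Gamma=\langle g\rangle$, the Zariski closure $\mathcal{G}=\overline{\langle g\rangle}$ is a commutative $k$-defined algebraic group. Let $g=g_{s}g_{u}$ be the Jordan decomposition (with $g_{s},g_{u}\in\mathcal{G}(k)$, since $k$ is perfect), set $\mathcal{T}=\overline{\langle g_{s}\rangle}$, which is diagonalizable, and note that $\overline{\langle g_{u}\rangle}$ is connected unipotent. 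By the structure theory of commutative affine algebraic groups in characteristic $0$ one has $\mathcal{G}=\mathcal{T}\times\overline{\langle g_{u}\rangle}$ with $\mathcal{U}(\mathcal{G})=\overline{\langle g_{u}\rangle}$. Fullness forces $\dim\mathcal{U}(\mathcal{G})={\rm rank}\,\Gamma=1$, hence $g_{u}\neq e$ and $\mathcal{U}(\mathcal{G})\cong\mathbb{G}_{a}$; write $N=\log g_{u}\neq 0$, a nilpotent operator, so that $\mathbb{G}_{a}$ acts on any rational module by $t\mapsto\exp(tN)$.

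Now decompose $V=\bigoplus_{\chi}V_{\chi}$ into $\mathcal{T}$-weight spaces. Each $V_{\chi}$ is a $\mathcal{G}$-submodule, since $\mathbb{G}_{a}$ commutes with $\mathcal{T}$, hence a $\Gamma$-submodule, and $H^{\ast}_{r}(\mathcal{G},-)$, $H^{\ast}(\Gamma,-)$ and the comparison map are all additive, so we may treat one $\chi$ at a time. For $\chi\neq 1$ the element $g=g_{s}g_{u}$ acts on $V_{\chi}$ with $g_{s}$ the nonzero scalar $\chi(g_{s})\neq 1$ and $g_{u}$ unipotent, so $g-1$ is invertible and $H^{\ast}(\Gamma,V_{\chi})=0$; on the rational side the Hochschild--Serre spectral sequence of $1\to\mathbb{G}_{a}\to\mathcal{G}\to\mathcal{T}\to 1$ together with $H^{>0}_{r}(\mathcal{T},-)=0$ (rational modules over a diagonalizable group are semisimple) gives $H^{\ast}_{r}(\mathcal{G},V_{\chi})=H^{\ast}_{r}(\mathbb{G}_{a},V_{\chi})^{\mathcal{T}}$, which is $0$ because $\mathcal{T}$ acts through $\chi\neq 1$ on the two-term complex $V_{\chi}\xrightarrow{N}V_{\chi}$ computing $H^{\ast}_{r}(\mathbb{G}_{a},V_{\chi})$. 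Thus everything reduces to the trivial weight $U:=V^{\mathcal{T}}$, on which $\mathcal{G}$ acts through $\mathbb{G}_{a}$ by $t\mapsto\exp(tN)$ and $\Gamma$ acts through the Zariski-dense copy $\langle g_{u}\rangle=\langle\exp N\rangle\subset\mathbb{G}_{a}$.

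It then remains to show that restriction induces an isomorphism $H^{\ast}_{r}(\mathbb{G}_{a},U)\xrightarrow{\sim}H^{\ast}(\mathbb{Z},U)$. In characteristic $0$, $H^{\ast}_{r}(\mathbb{G}_{a},U)$ is Lie algebra cohomology, computed by the complex $0\to U\xrightarrow{N}U\to 0$, whereas $H^{\ast}(\mathbb{Z},U)$ is computed by $0\to U\xrightarrow{\exp N-1}U\to 0$. Since $N$ is nilpotent, $\exp N-1=N\varphi$ with $\varphi=\sum_{j\geq 0}N^{j}/(j+1)!$ invertible, so the two complexes have the same kernels and images; in particular both cohomologies vanish for $\ast\geq 2$, agree in degree $0$ (there the comparison map is the inclusion of invariants, an isomorphism because $\mathbb{Z}$ is Zariski-dense), and have equal finite dimension in degree $1$. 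To see that the comparison map is itself an isomorphism in degree $1$, I would run a d\'evissage along $0\to NU\to U\to U/NU\to 0$: the comparison map is a morphism of the associated long exact sequences, so induction on the nilpotency length of $N$ reduces to the case $N=0$, where $H^{1}_{r}(\mathbb{G}_{a},k)\cong k$ (spanned by the identity morphism $\mathbb{G}_{a}\to\mathbb{G}_{a}$) restricts isomorphically onto $H^{1}(\mathbb{Z},k)={\rm Hom}(\mathbb{Z},k)\cong k$, and the five lemma completes the induction. The main obstacle is not any single computation but the bookkeeping that makes these reductions valid for the \emph{specific} map induced by $\Gamma\subset\mathcal{G}(k)$: that it respects the weight decomposition, that on $U$ it coincides with restriction along $\langle g_{u}\rangle\hookrightarrow\mathbb{G}_{a}$ (using that $g$ and $g_{u}$ induce the same operator on $U$), and that it commutes with connecting homomorphisms. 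The structural facts about $\mathcal{G}$ and the vanishing statements for $\mathcal{T}$ and $\mathbb{G}_{a}$ are standard, appearing in the cited work of Hochschild and Jantzen.
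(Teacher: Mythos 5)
Your argument is, in substance, the paper's own: you split the abelian group $\mathcal{G}=\overline{\langle g\rangle}$ into its semisimple and unipotent parts, split $V$ accordingly (your weight decomposition over $\mathcal{T}$ refines the paper's $V=V^{0}\oplus V^{1}$), kill the nontrivial-weight part on both sides by the same two observations (invertibility of $g-1$ on the group side, vanishing of $\mathcal{T}$-invariants in $H^{\ast}(\mathfrak{u},V_{\chi})$ on the rational side), and compare the trivial-weight part through the one-dimensional unipotent radical, ending at the computation $H^{1}_{r}(\mathbb{G}_{a},k)\cong k\cong H^{1}(\mathbb{Z},k)$ that the paper takes from Jantzen, Part I, Lemma 4.21. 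The only structural difference is cosmetic: on the trivial-weight part you run the d\'evissage along $0\to NU\to U\to U/NU\to 0$ by induction on the nilpotency length of $N$, where the paper filters by a composition series with one-dimensional trivial quotients; both reduce to the same base case and both need the naturality of the comparison map in long exact sequences.

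There is, however, one genuine omission. The lemma is stated for an arbitrary $k$-rational $\mathcal{G}$-module $V$, which means a locally finite but possibly infinite-dimensional module, and the paper really uses that generality later: the Dekimpe--Igodt vanishing theorem (Corollary \ref{VANNN}) is obtained by applying the main theorems with coefficients $k[\mathcal{U}]\otimes V_{k}$, which is infinite-dimensional. Your proof tacitly assumes $\dim V<\infty$: the phrase ``equal finite dimension in degree $1$'' and, more seriously, the induction on the nilpotency length of $N$ both break down when $N$ is only locally nilpotent, as it is on a general rational module. The paper disposes of this at the outset by writing $V=\bigcup_{i}V^{i}$ as a filtered union of finite-dimensional rational submodules and invoking the compatibility of both $H^{\ast}(\Gamma,-)$ and $H^{\ast}_{r}(\mathcal{G},-)$ with such direct limits (Brown for group cohomology; Jantzen, Part I, Lemma 4.17 for rational cohomology). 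Adding that one reduction at the start closes the gap; the rest of your argument then goes through as written.
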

\begin{proof}
For a rational $\mathcal G$-module, we have $V=\bigcup_{i} V^{i}$ for finite-dimensional rational $k$-submodules $V^{i}$.
Then by \cite{Brf} we have \[H^{\ast}(\Gamma,\bigcup_{i} {V^{i}}_{k})\cong \varinjlim H^{\ast}(\Gamma,  {V^{i}}_{k})
\]
and by \cite[Part 1. Lemma 4.17]{Jan} we have 
\[H^{\ast}_{r}(\mathcal G, \bigcup_{i} {V^{i}}_{k})\cong \varinjlim H^{\ast}_{r}(\mathcal G, {V^i}_{k}).
\]
Hence we can assume that the rational $\mathcal G$-module is finite-dimensional.

For $\ast\not=0,1$ we have $H^{\ast}(\Gamma,V_{k})=0$ and  $H^{\ast}_{r}(\mathcal G, V_{k})=0$.
Since $\Gamma$ is a Zariski-dense subgroup in $\mathcal G$,
we have $H^{0}(\Gamma,V_{k})=V^{\Gamma}=V_{k}^{\mathcal G(k)}=H^{0}(\mathcal G,V_{k})$.
Hence it is sufficient to show that the map $H^{1}_{r}(\mathcal G,V_{k})\to H^{1}(\Gamma, V_{k})$ is an isomorphism.

Since $\Gamma$ is Abelian, $\mathcal G$ is also Abelian and hence we have $\mathcal G=\mathcal G_{s}\times {\mathcal U}(\mathcal G)$ where $\mathcal G_{s}=\{ g\in {\mathcal G}\vert\, g\,\, {\rm  is}\,\,{\rm semi-simple}\} $ (see \cite[Theorem 4.7]{Bor}).
For the representation $\phi:\mathcal G\to GL(V)$, the decomposition $\mathcal G=\mathcal G_{s}\times {\mathcal U}(\mathcal G)$ gives
the  decomposition $\phi=\phi_{s}\times \phi_{u}$.  
Let $V^{0}=\{v\in V\vert \forall g\in \mathcal G\,\,\, \phi_{s}(g)v=v \}$. 
Then $V^{0}$ is a $k$-rational sub-module of $V$ for  both the representations $\phi_{s}$ and $\phi$.
 We can take  a complement $V^{1}$ such that we have  a direct sum $V=V^{0}\oplus V^{1}$ of sub-module for  both $\phi_{s}$ and $\phi$.

We first show $H^{1}_{r}(\mathcal G,  V^{1}_{k})=0$ and $H^{1}(\Gamma, V^{1}_{k})=0$.
It is sufficient to show the case $k=\C$.
Then we have the sequence $V^{1}=V^{1,0}\supset V^{1,1}\supset \dots\supset V^{1, m}$ of sub-modules such that $\dim V_{1,i}/V_{1, i+1}=1$.
By induction, It is sufficient to show the case $V^{1}$ is a non-trivial $1$-dimensional module.
In this case it is easy to show 
 $H^{1}(\Gamma, V^{1}_{k})=0$.
Since $V^{1}$ is $1$-dimensional, the unipotent radical ${\mathcal U}(\mathcal G)$ acts trivially on $V^{1}$.
Let ${\frak u}_{k}$ be the $k$-Lie algebra of ${\mathcal U}(\mathcal G)$.
Then we have 
\[H^{1}_{r}(\mathcal G,  V^{1}_{k})\cong H^{1}({\frak u}_{k},  V^{1}_{k})^{\mathcal G_{s}(k)},
\]
see Theorem \ref{hocun}.
Since $\mathcal G_{s}$ acts non-trivially on $V^{1}$,  ${\mathcal U}(\mathcal G)$ acts trivially on $V^{1}$ and the decomposition $\mathcal G=\mathcal G_{s}\times {\mathcal U}(\mathcal G)$ is a direct product,
we have 
\[H^{1}({\frak u}_{k},  V^{1}_{k})^{\mathcal G_{s}(k)}={\frak u}_{k}^{\ast}\otimes (V^{1}_{k})^{\mathcal G_{s}(k)}=0.\]

We show that  the inclusion $\Gamma\subset {\mathcal G}(k)$ induces a cohomology isomorphism
\[H^{1}_{r}({\mathcal G},V^{0}_{k})\cong H^{1}(\Gamma,V^{0}_{k}).
\]
Since $\phi$ is a unipotent representation on $V^{0}$, we have 
$V^{0}=V^{0,0}\supset V^{0,1}\supset \dots\supset V^{0,m}$ of sub-modules such that $ V^{0,i}/V^{0, i+1}$ is the $1$-dimensional trivial module.
By induction, it is sufficient to show the case $V^{0}_{k}=k$ as the trivial module.
By the projection $\mathcal G=\mathcal G_{s}\times {\mathcal U}(\mathcal G) \to {\mathcal U}(\mathcal G)$, we have the Zariski-dense inclusion $\Gamma\subset {\mathcal U}(\mathcal G)(k)$.
By $\Gamma=\Z$, we have ${\mathcal U}(\mathcal G)(k)=k$ as the additive group and $\Gamma$ is embedded as $a\Z$ for some $a\in k$.
We can easily show that the inclusion $\Gamma\subset {\mathcal U}(\mathcal G)$ induces  a cohomology isomorphism
\[H_{r}^{1}(\mathcal U(\mathcal G),k)\cong H^{1}(\Gamma,k)
\]
see \cite[Part I. 4.21 Lemma]{Jan}.
By the direct product $\mathcal G=\mathcal G_{s}\times {\mathcal U}(\mathcal G)$, we have 
\[H_{r}^{1}(\mathcal G,k)\cong H_{r}^{1}(\mathcal U(\mathcal G),k)^{\mathcal G_{s}(k)}=H_{r}^{1}(\mathcal U(\mathcal G),k),\]
see Theorem \ref{unirediso}.
By this isomorphism, 
we can prove that 
the inclusion $\Gamma\subset {\mathcal G}(k)$ induces a cohomology isomorphism
\[H^{1}_{r}({\mathcal G},V^{0}_{k})\cong H^{1}(\Gamma,V^{0}_{k})
\]
and the lemma follows.
\end{proof}

By a similar proof, we have the following lemma.
(See \cite[Chapter IX]{BW} for the commutativity of inductive limits for the continuous cohomology of Lie groups.)
\begin{lemma}\label{d1r}
Let $G=\R$.
We suppose that for a field $k= \R$ or $\C$ we have a $k$-defined algebraic group $\mathcal G$ and an inclusion $G\subset {\mathcal G}(k)$ which is a full representation.
Let $V$ be a  $k$-rational $\mathcal G$-module.
Then  the inclusion $G\subset {\mathcal G}(k)$ induces a cohomology isomorphism
\[H^{\ast}_{r}({\mathcal G},V_{k})\cong H^{\ast}_{c}(G,V_{k}).
\]
\end{lemma}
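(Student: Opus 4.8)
The plan is to imitate the proof of Lemma~\ref{ra1}, systematically replacing $\Z$ by $\R$ and group cohomology by continuous cohomology; the only substantive change is that the role played there by the freeness of $\Z$ and the explicit computation of $H^{\ast}(\Z,-)$ is played here by the standard comparison between the continuous cohomology of the simply connected group $G=\R$ and the cohomology of its (one-dimensional, abelian) Lie algebra ${\frak g}$ (van Est; see \cite[Chapter IX]{BW}), or, if one prefers, by a direct computation of continuous crossed homomorphisms of $\R$. First, writing a rational $\mathcal G$-module as $V=\bigcup_{i}V^{i}$ with the $V^{i}$ finite-dimensional rational submodules, the commutativity of inductive limits for $H^{\ast}_{c}$ (\cite[Chapter IX]{BW}) together with \cite[Part I. Lemma 4.17]{Jan} reduces us to $\dim V<\infty$. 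For $\ast\not=0,1$ we get $H^{\ast}_{c}(\R,V_{k})=0$ (since $\dim{\frak g}=1$) and $H^{\ast}_{r}(\mathcal G,V_{k})=0$ (by Theorem~\ref{hocun}, since $\dim\mathcal U(\mathcal G)=\dim G=1$), and for $\ast=0$ Zariski density gives $H^{0}_{c}(\R,V_{k})=V_{k}^{G}=V_{k}^{\mathcal G(k)}=H^{0}_{r}(\mathcal G,V_{k})$. Thus only the degree-one case remains.

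Since $\R$ is abelian and Zariski-dense, $\mathcal G$ is abelian and connected, so $\mathcal G=\mathcal G_{s}\times\mathcal U(\mathcal G)$ by \cite[Theorem 4.7]{Bor} with $\mathcal G_{s}$ a torus. Decompose $V=V^{0}\oplus V^{1}$ as in Lemma~\ref{ra1}, with $V^{0}$ the largest submodule on which $\mathcal G_{s}$ acts trivially. The equality $H^{1}_{r}(\mathcal G,V^{1}_{k})=0$ is proved word for word as there: after reducing to $k=\C$ and filtering, one is left with a one-dimensional module on which $\mathcal U(\mathcal G)$ acts trivially and $\mathcal G_{s}$ nontrivially, and Theorem~\ref{hocun} gives $H^{1}_{r}(\mathcal G,V^{1}_{k})\cong H^{1}({\frak u}_{k},V^{1}_{k})^{\mathcal G_{s}(k)}={\frak u}_{k}^{\ast}\otimes (V^{1}_{k})^{\mathcal G_{s}(k)}=0$. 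For the continuous side, van Est identifies $H^{\ast}_{c}(\R,V^{1}_{k})$ with the Lie algebra cohomology $H^{\ast}({\frak g},V^{1}_{k})$; after the same filtration, on the resulting one-dimensional module the projection of $\rho(\R)$ into $\mathcal G_{s}$ — which is Zariski-dense — acts through a nontrivial character $t\mapsto e^{ct}$ with $c\not=0$, so the generator of ${\frak g}$ acts by the nonzero scalar $c$, whence $H^{1}({\frak g},V^{1}_{k})=0$ and $H^{1}_{c}(\R,V^{1}_{k})=0$.

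It remains to show that $\R\subset\mathcal G(k)$ induces an isomorphism $H^{1}_{r}(\mathcal G,V^{0}_{k})\cong H^{1}_{c}(\R,V^{0}_{k})$. On $V^{0}$ the group $\mathcal G$ acts through $\mathcal U(\mathcal G)$, hence unipotently; filtering $V^{0}$ by submodules with one-dimensional trivial quotients and inducting reduces to the trivial module $V^{0}_{k}=k$. Projecting $\mathcal G=\mathcal G_{s}\times\mathcal U(\mathcal G)\to\mathcal U(\mathcal G)$ and using that $\mathcal U(\mathcal G)$ is a one-dimensional connected unipotent group, hence isomorphic to $\mathbb{G}_{a}$ with $\mathcal U(\mathcal G)(k)=(k,+)$, the Lie group $\R$ maps to $(k,+)$ with Zariski-dense, hence nonzero, hence injective image. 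The $\R$-analogue of \cite[Part I. 4.21 Lemma]{Jan} (applying van Est on the left-hand side) then shows that this inclusion induces an isomorphism $H^{1}_{r}(\mathcal U(\mathcal G),k)\cong H^{1}_{c}(\R,k)$ — both sides are one-dimensional, and the class of the coordinate cocycle on $\mathbb{G}_{a}$ pulls back to a nonzero cocycle on $\R$. Finally the direct product decomposition and Theorem~\ref{unirediso} give $H^{1}_{r}(\mathcal G,k)\cong H^{1}_{r}(\mathcal U(\mathcal G),k)^{\mathcal G_{s}(k)}=H^{1}_{r}(\mathcal U(\mathcal G),k)$, and composing these identifications yields $H^{1}_{r}(\mathcal G,V^{0}_{k})\cong H^{1}_{c}(\R,V^{0}_{k})$; the lemma follows.

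I expect the main obstacle to be arranging the bottom-level computations so that the continuous/Lie-algebra comparison does the work done by the explicit $\Z$-cocycle computation in Lemma~\ref{ra1}: one must verify that van Est's isomorphism $H^{\ast}_{c}(\R,M)\cong H^{\ast}({\frak g},M)$ is available for the finite-dimensional modules at hand (it is, $\R$ being simply connected with trivial maximal compact subgroup), and then identify the relevant action of ${\frak g}$ as a nonzero scalar in the $V^{1}$-case (killing $H^{1}$) and as zero in the trivial-module case (producing a one-dimensional $H^{1}$ that matches the rational side); the other delicate point is the reduction to finite dimension, which relies on the cited continuity properties of $H^{\ast}_{c}$.
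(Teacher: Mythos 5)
Your proposal is correct and is essentially the proof the paper intends: the paper proves Lemma~\ref{d1r} only by the remark ``by a similar proof'' (referring to Lemma~\ref{ra1}, with the cited commutativity of inductive limits for $H^{\ast}_{c}$), and your write-up carries out exactly that substitution, with the van Est identification $H^{\ast}_{c}(\R,V)\cong H^{\ast}(\frak g,V)$ correctly standing in for the explicit $\Z$-cocycle computations in both the $V^{1}$-vanishing and the trivial-module step.
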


\subsection{Proof of Main Theorems}

\begin{proof}[Proof of Theorem \ref{poly1}]
We proceed by induction on ${\rm rank}\, \Gamma$.
By Lemma \ref{ra1}, in the case ${\rm rank}\, \Gamma=1$ the statement follows.

$\Gamma$ admits a  finite index normal polycyclic subgroup $\Gamma^{\prime}$ such that $\Gamma^{\prime}$ admits a normal subgroup $\Gamma^{\prime\prime}$ such that $\Gamma^{\prime}/\Gamma^{\prime\prime}$ is  infinite cyclic (see \cite{R}).
Denote by $\mathcal G^{\prime}$ and $\mathcal H$ the Zariski-closures of $\Gamma^{\prime}$ and $\Gamma^{\prime\prime}$ respectively in $\mathcal G$.
We notice that $\Gamma^{\prime}/\Gamma^{\prime\prime}\to \mathcal G^{\prime}/\mathcal H$ has a Zariski-dense image.

Since $\Gamma^{\prime}$ is a finite index subgroup of $\Gamma$, we have ${\rm rank}\, \Gamma={\rm rank}\,\Gamma^{\prime}$.
By the fullness, we have $\dim \mathcal U(\mathcal G)={\rm rank}\, \Gamma$.
Since $\Gamma/\Gamma^{\prime}$ is finite, $\mathcal G/\mathcal G^{\prime}$ is also finite and so we have
$\mathcal U(\mathcal G)=\mathcal U(\mathcal G^{\prime})$.
Hence we have $\dim \mathcal U(\mathcal G^{\prime})={\rm rank}\,\Gamma^{\prime}$ and so $\Gamma^{\prime}\to \mathcal G^{\prime}$ is also full.

We have 
\[{\rm rank}\,\Gamma^{\prime}={\rm rank}\,\Gamma^{\prime\prime}+{\rm rank}\,\Gamma^{\prime}/\Gamma^{\prime\prime}\] and 
\[\dim  \mathcal U(\mathcal G^{\prime})=\dim \mathcal U(\mathcal H)+ \dim \mathcal U(\mathcal G^{\prime}/\mathcal H).\]
By Lemma \ref{hut}, we have $\dim \mathcal U(\mathcal H)\le {\rm rank}\,\Gamma^{\prime\prime}$ and $\dim \mathcal U(\mathcal G^{\prime}/\mathcal H)\le {\rm rank}\,\Gamma^{\prime}/\Gamma^{\prime\prime}$.
By $\dim \mathcal U(\mathcal G^{\prime})={\rm rank}\,\Gamma^{\prime}$, these relations imply
$\dim \mathcal U(\mathcal H)= {\rm rank}\,\Gamma^{\prime\prime}$ and $\dim \mathcal U(\mathcal G^{\prime}/\mathcal H)= {\rm rank}\,\Gamma^{\prime}/\Gamma^{\prime\prime}$.
Hence  both representations $\Gamma^{\prime\prime}\to \mathcal H$ and $\Gamma^{\prime}/\Gamma^{\prime\prime} \to \mathcal G^{\prime}/\mathcal H$ are full.

Take $\Delta=\Gamma^{\prime}\cap \mathcal H$.
By $\Gamma^{\prime\prime}\subset\Delta$, we have
${\rm rank}\,\Gamma^{\prime}/\Delta \le {\rm rank}\,\Gamma^{\prime}/\Gamma^{\prime\prime}$.
Since $\Gamma^{\prime }/\Delta$ is Zariski-dense in $\mathcal G^{\prime}/\mathcal H$, by Lemma \ref{hut}, we have $ \dim \mathcal U(\mathcal G^{\prime}/\mathcal H)\le {\rm rank}  \Gamma^{\prime }/\Delta$.
By $\dim \mathcal U(\mathcal G^{\prime}/\mathcal H)={\rm rank}\,\Gamma^{\prime}/\Gamma^{\prime\prime}$,
we obtain ${\rm rank}\,\Gamma^{\prime}/\Delta ={\rm rank}\,\Gamma^{\prime}/\Gamma^{\prime\prime}$.
Since we have $\Gamma^{\prime}/\Delta\cong (\Gamma^{\prime}/\Gamma^{\prime\prime})/(\Delta/\Gamma^{\prime\prime})$
and $\Gamma^{\prime}/\Gamma^{\prime\prime}$ is infinite cyclic, we have $\Delta/\Gamma^{\prime\prime}=1$ and so $\Delta=\Gamma^{\prime\prime}$.

Now  we have the commutative diagram
\[\xymatrix{
1\ar[r]& \Delta\ar[r]\ar[d]&\Gamma^{\prime}\ar[r]\ar[d]&\Gamma^{\prime}/\Delta\ar[r]\ar[d]&1,\\
1\ar[r]& \mathcal H\ar[r]&\mathcal G^{\prime}\ar[r]&\mathcal G^{\prime}/\mathcal H \ar[r]&1
 }\]
such that $\Delta$,  $\Gamma^{\prime}$ and $\Gamma^{\prime}/\Delta$ are Zariski-dense subgroups in $\mathcal H$, $\mathcal G^{\prime}$ and $\mathcal G^{\prime}/\mathcal H$ respectively.
The groups $\Delta$,  $\Gamma^{\prime}$ and $\Gamma^{\prime}/\Delta$ are torsion-free polycyclic and the inclusions $\Delta\subset \mathcal H$,  $\Gamma^{\prime}\subset \mathcal G^{\prime}$ and $\Gamma^{\prime}/\Delta\subset \mathcal G^{\prime}/\mathcal H$ are full representations.

Consider the spectral sequence
$E_{\ast}^{\ast,\ast}(\Gamma^{\prime},\Delta,V_{k})$
of the group extension
\[\xymatrix{
1\ar[r]& \Delta\ar[r]&\Gamma^{\prime}\ar[r]&\Gamma^{\prime}/\Delta\ar[r]&1,}\]
as in \cite{HS} and the spectral sequence $\, _{r}E^{\ast,\ast}_{\ast}(\mathcal G^{\prime},\mathcal H,V_{k})$ of the $k$-defined  algebraic group extension
\[\xymatrix{
1\ar[r]& \mathcal H\ar[r]&\mathcal G^{\prime}\ar[r]&\mathcal G^{\prime}/\mathcal H \ar[r]&1
 }\]
as in \cite[Proposition 6.6]{Jan}.
Then  the commutative diagram
\[\xymatrix{
1\ar[r]&\Delta\ar[r]\ar[d]&\Gamma^{\prime}\ar[r]\ar[d]&\Gamma^{\prime}/\Delta\ar[r]\ar[d]&1,\\
1\ar[r]& \mathcal H\ar[r]&\mathcal G^{\prime}\ar[r]&\mathcal G^{\prime}/\mathcal H \ar[r]&1
 }\]
 induces
 a homomorphism $\, _{r}E^{\ast,\ast}_{\ast}(\mathcal G^{\prime},\mathcal H,V_{k})\to E_{\ast}^{\ast,\ast}(\Gamma^{\prime},\Delta,V_{k})$
such that we have the commutative diagram
\[\xymatrix{
\, _{r}E^{\ast,\ast}_{2}(\mathcal G^{\prime},\mathcal H,V)\ar[r]\ar^{\cong}[d]& E_{2}^{\ast,\ast}(\Gamma^{\prime},\Delta,V_{k})\ar^{\cong}[d] \\
H^{\ast}_{r}\left(\mathcal G^{\prime}/\mathcal H, H^{\ast}_{r}(\mathcal H,V_{k})\right)\ar[r] &H^{\ast}\left(\Gamma^{\prime}/\Delta, H^{\ast}(\Delta,V_{k})\right).
 }\]

By induction hypothesis,  the inclusion $\Delta\subset \mathcal H$ induces a cohomology isomorphism $H^{\ast}_{r}(\mathcal H,V_{k})\cong H^{\ast}(\Delta,V_{k})$  and the inclusion $\Gamma^{\prime}/\Delta\subset \mathcal G^{\prime}/\mathcal H$  induces a cohomology isomorphism $H^{\ast}_{r}\left(\mathcal G/\mathcal H, H^{\ast}_{r}(\mathcal H,V_{k})\right)\cong H^{\ast}\left(\Gamma/\Delta, H^{\ast}_{r}(\mathcal H,V_{k})\right)$.
Hence the homomorphism $\, _{r}E^{\ast,\ast}_{2}(\mathcal G^{\prime},\mathcal H,V_{k})\to E_{2}^{\ast,\ast}(\Gamma^{\prime},\Delta,V_{k})$ is an isomorphism.
 By \cite[Theorem 3.5]{Mc}, we can show the isomorphism 
\[H^{\ast}_{r}(\mathcal G^{\prime},V_{k})\cong H^{\ast}( \Gamma^{\prime},V_{k}).
\]
Since $\Gamma/\Gamma^{\prime}$ is a finite group,  the map $\Gamma/\Gamma^{\prime}\to  \mathcal G/\mathcal G^{\prime}$ is surjective by the Zariski-density.
Hence $\mathcal G/\mathcal G^{\prime}$ is also finite.
We have isomorphisms 
$H^{\ast}_{r}(\mathcal G,V_{k})\cong H^{\ast}_{r}(\mathcal G^{\prime},V_{k})^{\mathcal G/\mathcal G^{\prime}(k)}$
and $H^{\ast}( \Gamma^{\prime},V_{k})^{\Gamma/\Gamma^{\prime}}\cong H^{\ast}( \Gamma,V_{k})$
 and the induced map \[H^{\ast}_{r}(\mathcal G,V_{k})\to  H^{\ast}( \Gamma,V_{k})\] is identified with the map
\[ H^{\ast}_{r}(\mathcal G^{\prime},V_{k})^{\mathcal G/\mathcal G^{\prime}(k)}\to  H^{\ast}( \Gamma^{\prime},V_{k})^{\Gamma/\Gamma^{\prime}}
\]
associated with the maps $\Gamma^{\prime}\to \mathcal G^{\prime}$ and $\Gamma/\Gamma^{\prime}\to  \mathcal G/\mathcal G^{\prime}$.
Since the map  $\Gamma/\Gamma^{\prime}\to  \mathcal G/\mathcal G^{\prime}$ is surjective and we have the isomorphism 
\[H^{\ast}_{r}(\mathcal G^{\prime},V_{k})\cong H^{\ast}( \Gamma^{\prime},V_{k})
\]
as above, we can show that  the map 
\[ H^{\ast}_{r}(\mathcal G^{\prime},V_{k})^{\mathcal G/\mathcal G^{\prime}(k)}\to  H^{\ast}( \Gamma^{\prime},V_{k})^{\Gamma/\Gamma^{\prime}}
\]
is an isomorphism.
The injectivity  is obvious.
By the surjectivity of   $\Gamma/\Gamma^{\prime}\to  \mathcal G/\mathcal G^{\prime}$,
we can easily check that if $\iota (a)\in H^{\ast}( \Gamma^{\prime},V_{k})^{\Gamma/\Gamma^{\prime}}
$ for $a\in H^{\ast}_{r}(\mathcal G^{\prime},V_{k})$, then we have $a\in H^{\ast}_{r}(\mathcal G^{\prime},V_{k})^{\mathcal G/\mathcal G^{\prime}(k)}$ 
where we denote by $\iota$ the isomorphism $H^{\ast}_{r}(\mathcal G^{\prime},V_{k})\cong H^{\ast}( \Gamma^{\prime},V_{k})$.
This implies the surjectivity of  $ H^{\ast}_{r}(\mathcal G^{\prime},V_{k})^{\mathcal G/\mathcal G^{\prime}(k)}\to  H^{\ast}( \Gamma^{\prime},V_{k})^{\Gamma/\Gamma^{\prime}}$.
Hence the theorem  follows.
\end{proof}

\begin{proof}[Proof of Theorem \ref{lie1}]
We proceed by induction on $\dim G$.
By Lemma \ref{d1r}, in the case ${\rm dim}\, G=1$ the statement follows. 

We have a normal subgroup $G^{\prime\prime}\subset G$ such that $G/ G^{\prime\prime}=\R$.
Denote by $\mathcal H$ the Zariski-closure of  $G^{\prime\prime}$  in $\mathcal G$.
We have 
\[\dim G=\dim G^{\prime\prime}+\dim G/G^{\prime\prime}
\]
and 
\[\dim \mathcal U(\mathcal G)=\dim \mathcal U(\mathcal H)+\dim \mathcal U(\mathcal G/\mathcal H).
\]
By Lemma \ref{hut}, we have $\dim \mathcal U(\mathcal H)\le \dim G^{\prime\prime}$ and $\dim \mathcal U(\mathcal G/\mathcal H)\le \dim G/G^{\prime\prime}$.
By the fullness, we have $\dim G=\dim \mathcal U(\mathcal G)$.
These relations imply  $\dim \mathcal U(\mathcal H)= \dim G^{\prime\prime}$ and $\dim \mathcal U(\mathcal G/\mathcal H)= \dim G/G^{\prime\prime}$.
Hence both representations $G^{\prime\prime}\to \mathcal H$ and $G/G^{\prime\prime} \to \mathcal G/\mathcal H$ are full.

Take $H=G\cap \mathcal H$.
Then $H$ is a closed normal subgroup in $G$ and hence $H$ and $G/H$ are  simply connected (see \cite{OV2}).
By $G^{\prime\prime}\subset H$, we have $\dim G/H\le \dim G/G^{\prime\prime}$.
Since  $G/H$ is Zariski-dense in $\mathcal G/\mathcal H$,
by  Lemma \ref{hut}, we have $\dim \mathcal U(\mathcal G/\mathcal H)\le \dim G/H$.
Hence, by $\dim \mathcal U(\mathcal G/\mathcal H)= \dim G/G^{\prime\prime}$, we obtain $\dim G/H=\dim G/G^{\prime\prime}$.
Thus $\dim H=\dim G^{\prime\prime}$ and so $H=G^{\prime\prime}$.

 We have the commutative diagram
\[\xymatrix{
1\ar[r]& H\ar[r]\ar[d]&G\ar[r]\ar[d]&G/H\ar[r]\ar[d]&1,\\
1\ar[r]& \mathcal H\ar[r]&\mathcal G\ar[r]&\mathcal G/\mathcal H \ar[r]&1
 }\]
such that $H$, $G$ and $G/H$ are Zariski-dense subgroups in $\mathcal H$, $\mathcal G$ and $\mathcal G/\mathcal H$ respectively.
The groups  $H$,  $G$ and $G/H$ are simply connected solvable Lie groups and the inclusions $H\subset \mathcal H$,  $G\subset \mathcal G$ and $G/H\subset \mathcal G/\mathcal H$ are full representations.

Consider the spectral sequence 
$\,_{c}E_{\ast}^{\ast,\ast}(G,H,V_{k})$
of the Lie  group extension
\[\xymatrix{
1\ar[r]& H\ar[r]&G\ar[r]&G/H\ar[r]&1,}\]
as in \cite[Chapter IX. Theorem 4.3]{BW} or \cite[Proof of Theorem 7.1]{HM} and the spectral sequence $\, _{r}E^{\ast,\ast}_{\ast}(\mathcal G,\mathcal H,V_{k})$ of the $k$-defined algebraic group extension
\[\xymatrix{
1\ar[r]& \mathcal H\ar[r]&\mathcal G\ar[r]&\mathcal G/\mathcal H \ar[r]&1
 }\]
as in \cite[Proposition 6.6]{Jan}.
Then the commutative diagram
\[\xymatrix{
1\ar[r]&H\ar[r]\ar[d]&G\ar[r]\ar[d]&G/H\ar[r]\ar[d]&1,\\
1\ar[r]& \mathcal H\ar[r]&\mathcal G\ar[r]&\mathcal G/\mathcal H \ar[r]&1
 }\]
 induces
 a homomorphism $\, _{r}E^{\ast,\ast}_{\ast}(\mathcal G,\mathcal H,V_{k})\to \,_{c}E_{\ast}^{\ast,\ast}(G,H,V_{k})$
such that we have the commutative diagram
\[\xymatrix{
\, _{r}E^{\ast,\ast}_{2}(\mathcal G,\mathcal H,V_{k})\ar[r]\ar^{\cong}[d]& \,_{c}E_{2}^{\ast,\ast}(G,H,V_{k})\ar^{\cong}[d] \\
H^{\ast}_{r}\left(\mathcal G/\mathcal H, H^{\ast}_{r}(\mathcal H,V_{k})\right)\ar[r] &H^{\ast}_{c}\left(G/H, H_{c}^{\ast}(H,V_{k})\right).
 }\]

By induction hypothesis,  the inclusion $H\subset \mathcal H$ induces a cohomology isomorphism $H^{\ast}_{r}(\mathcal H,V)\cong H^{\ast}_{c}(H,V)$  and the inclusion $G/H\subset \mathcal G/\mathcal H$  induces a cohomology isomorphism $H^{\ast}_{r}\left(\mathcal G/\mathcal H, H^{\ast}_{r}(\mathcal H,V_{k})\right)\cong H^{\ast}_{c}\left(G/H, H^{\ast}_{r}(\mathcal H,V_{k})\right)$.
Hence the homomorphism $\, _{r}E^{\ast,\ast}_{2}(\mathcal G,\mathcal H,V_{k})\to \,_{c}E_{2}^{\ast,\ast}(G,H,V_{k})$ is an isomorphism.
 By \cite[Theorem 3.5]{Mc}, we have 
\[H^{\ast}_{r}(\mathcal G,V_{k})\cong H^{\ast}_{c}( G,V_{k}).
\]
\end{proof}

\section{Full representations and algebraic hulls}\label{sefull}

An algebraic group $\mathcal G$ is {\it minimal} if the centralizer $Z_{\mathcal G}({\mathcal U}(\mathcal G))$ of ${\mathcal  U}(\mathcal G)$ is contained in  ${\mathcal  U}({\mathcal G})$ (see \cite[Section 1]{AN} for  the meaning of minimal).

Let $\Gamma$ (resp. $G$) be a torsion-free virtually polycyclic group (resp. simply connected solvable Lie group).
 \begin{theorem} \rm (\cite[Theorem A.1, Corollary A.3]{B}
\cite[Proposition 4.40, Lemma 4.41]{R})
There exists a $\Q$-defined (resp. $\R$-defined) minimal algebraic group $\mathcal G$ with an inclusion $\Gamma\subset {\mathcal G}(\Q)$ (resp. $G\subset {\mathcal G}(\R)$) which is a full representation.

Moreover such  $\mathcal G$ is unique up to isomorphism of  $\Q$-defined (resp. $\R$-defined)  algebraic groups.
\end{theorem}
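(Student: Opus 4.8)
We argue uniqueness first, since it feeds into the construction, and write out the virtually polycyclic case; the simply connected solvable case is verbatim the same, replacing $\Q$ by $\R$, $\Gamma$ by $G$, ``subgroup'' by ``Lie subgroup'', and ${\rm rank}$ by $\dim$.

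\emph{Uniqueness.} Let $\Gamma\subset{\mathcal G}_{1}(\Q)$ and $\Gamma\subset{\mathcal G}_{2}(\Q)$ be two minimal full representations. Embed $\Gamma$ diagonally in $({\mathcal G}_{1}\times{\mathcal G}_{2})(\Q)$ and let ${\mathcal G}_{12}$ be the Zariski-closure of its image; since $\Q$ is perfect, ${\mathcal G}_{12}$ is $\Q$-defined, and the projections $\pi_{i}\colon{\mathcal G}_{12}\to{\mathcal G}_{i}$ are surjective, their images being closed and containing the Zariski-dense subgroup $\Gamma$. The subgroup $\pi_{i}(\mathcal U({\mathcal G}_{12}))$ is normal and unipotent in ${\mathcal G}_{i}$ and contains $\mathcal U({\mathcal G}_{i})$ (since ${\mathcal G}_{i}/\pi_{i}(\mathcal U({\mathcal G}_{12}))$ is a quotient of the reductive group ${\mathcal G}_{12}/\mathcal U({\mathcal G}_{12})$, hence reductive), so $\pi_{i}(\mathcal U({\mathcal G}_{12}))=\mathcal U({\mathcal G}_{i})$. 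Hence $\dim\mathcal U({\mathcal G}_{12})\ge\dim\mathcal U({\mathcal G}_{i})={\rm rank}\,\Gamma$ by fullness, while $\dim\mathcal U({\mathcal G}_{12})\le{\rm rank}\,\Gamma$ by Lemma \ref{hut}; so $\pi_{i}$ restricts to a surjection of connected unipotent groups of equal dimension, with finite, hence trivial, kernel (a unipotent group in characteristic zero has no nontrivial finite subgroup), i.e.\ to an isomorphism $\mathcal U({\mathcal G}_{12})\cong\mathcal U({\mathcal G}_{i})$. In particular $\ker\pi_{1}\cap\mathcal U({\mathcal G}_{12})=\{1\}$; as $\ker\pi_{1}$ and $\mathcal U({\mathcal G}_{12})$ are both normal, $[\ker\pi_{1},\mathcal U({\mathcal G}_{12})]=\{1\}$, i.e.\ $\ker\pi_{1}\subset Z_{{\mathcal G}_{12}}(\mathcal U({\mathcal G}_{12}))$. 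Applying $\pi_{2}$ and using minimality of ${\mathcal G}_{2}$, $\pi_{2}(\ker\pi_{1})\subset Z_{{\mathcal G}_{2}}(\mathcal U({\mathcal G}_{2}))\subset\mathcal U({\mathcal G}_{2})$, and since $\pi_{2}$ is injective on $\ker\pi_{1}$ (because $\ker\pi_{1}\cap\ker\pi_{2}=\{1\}$), the group $\ker\pi_{1}$ is connected, normal and unipotent in ${\mathcal G}_{12}$, hence contained in $\mathcal U({\mathcal G}_{12})$, whence $\ker\pi_{1}=\{1\}$. Symmetrically $\ker\pi_{2}=\{1\}$, so $\pi_{2}\circ\pi_{1}^{-1}\colon{\mathcal G}_{1}\to{\mathcal G}_{2}$ is a $\Q$-isomorphism restricting to the identity on $\Gamma$. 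Applying the same argument to an isomorphism between two such groups shows that the hull is functorial: any such isomorphism extends uniquely to an isomorphism of hulls.

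\emph{Existence.} We induct on ${\rm rank}\,\Gamma$, paralleling the proofs of Theorems \ref{poly1} and \ref{lie1}. For the base case $\Gamma=\Z$, take ${\mathcal G}=\mathbb{G}_{a}$ over $\Q$ with $\Z\subset\Q$: it is Zariski-dense, $\dim\mathcal U(\mathbb{G}_{a})=1={\rm rank}\,\Z$, and $Z_{\mathbb{G}_{a}}(\mathbb{G}_{a})=\mathbb{G}_{a}=\mathcal U(\mathbb{G}_{a})$, so it is minimal. For the inductive step, choose (as in the earlier proofs) a finite-index normal polycyclic $\Gamma'\trianglelefteq\Gamma$ and $\Gamma''\trianglelefteq\Gamma'$ with $\Gamma'/\Gamma''\cong\Z$. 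By induction $\Gamma''$ has a minimal full $\Q$-hull $\mathcal H$, and by the functoriality above a generator of $\Gamma'/\Gamma''$ acts on $\mathcal H$ by a $\Q$-defined automorphism $\sigma$, with $\Q$-rational Jordan decomposition $\sigma=\sigma_{s}\sigma_{u}$. One now forms an algebraic group $\mathcal G'=\mathcal H\rtimes\mathcal D$ for a suitable abelian $\Q$-group $\mathcal D$ --- a one-dimensional unipotent group absorbing $\sigma_{u}$ (and, if $\sigma_{u}=1$, supplying a fresh $\Z$-direction), together with a diagonalizable group realizing $\sigma_{s}$ --- with a lift $\hat t\in\mathcal G'(\Q)$ of the generator acting on $\mathcal H$ as $\sigma$; one arranges this so that $\mathcal G'$ is minimal, and then $\dim\mathcal U(\mathcal G')=\dim\mathcal U(\mathcal H)+1={\rm rank}\,\Gamma''+1={\rm rank}\,\Gamma'$, so $\Gamma'\subset\mathcal G'$ is full. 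Finally one passes from $\Gamma'$ to $\Gamma$ by forming the algebraic extension of $\mathcal G'$ by the finite group $\Gamma/\Gamma'$ prescribed by the conjugation action of $\Gamma$ on $\Gamma'$ and by the class of $1\to\Gamma'\to\Gamma\to\Gamma/\Gamma'\to 1$; a finite extension leaves the unipotent radical unchanged, so fullness persists, and minimality is checked directly. The simply connected solvable case is the same induction on $\dim G$, using $1\to G''\to G\to\R\to 1$, without the last step.

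\emph{Expected obstacle.} The uniqueness part is essentially formal once Lemma \ref{hut} is available. The real work is the inductive construction of $\mathcal G'$: one must realize the semisimple part $\sigma_{s}$ of the generator inside an algebraic group \emph{defined over} $\Q$ (resp.\ $\R$) and do so --- together with the finite extension by $\Gamma/\Gamma'$ --- without enlarging $Z_{\mathcal G'}(\mathcal U(\mathcal G'))$ beyond $\mathcal U(\mathcal G')$, i.e.\ while keeping the group minimal. This is precisely what the semisimple-splitting constructions of Auslander--Green and of \cite[Proposition 4.40, Lemma 4.41]{R} and \cite[Theorem A.1, Corollary A.3]{B} accomplish, and we refer there for the remaining details.
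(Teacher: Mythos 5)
The paper does not actually prove this statement: it is imported wholesale from \cite[Theorem A.1, Corollary A.3]{B} and \cite[Proposition 4.40, Lemma 4.41]{R}, so there is no internal argument to compare yours against. Judged on its own merits, your \emph{uniqueness} half is complete and correct, and is essentially the standard argument from those references: diagonal embedding, equality $\pi_{i}(\mathcal U(\mathcal G_{12}))=\mathcal U(\mathcal G_{i})$, the dimension squeeze via Lemma \ref{hut} and fullness, and then minimality of $\mathcal G_{2}$ to force $\ker\pi_{1}$ into $\mathcal U(\mathcal G_{12})$ and hence to be trivial. Each step (closedness of images, triviality of finite subgroups of unipotent groups in characteristic zero, $[\ker\pi_{1},\mathcal U(\mathcal G_{12})]=1$ from normality and trivial intersection) is sound, and the derived functoriality is exactly what the inductive existence step needs.

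The \emph{existence} half, however, has a genuine gap, and it sits exactly where you flag it. The construction of $\mathcal G'=\mathcal H\rtimes\mathcal D$ is asserted, not performed: you must (i) show that the semisimple part $\sigma_{s}$ of the extending automorphism generates a $\Q$-defined (resp.\ $\R$-defined) diagonalizable subgroup of ${\rm Aut}(\mathcal H)$ --- rational Jordan decomposition in ${\rm Aut}(\mathcal H)$ rather than in $GL_{n}$ --- (ii) choose the lift $\hat t$ so that $\Gamma'$ actually embeds with Zariski-dense image and so that the extra $\mathbb{G}_{a}$-factor lands in, and enlarges, the unipotent radical by exactly one dimension, and (iii) verify $Z_{\mathcal G'}(\mathcal U(\mathcal G'))\subset\mathcal U(\mathcal G')$, which is not automatic and is the whole point of the ``semisimple splitting'' machinery. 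The final passage from $\Gamma'$ to $\Gamma$ is also nontrivial: one must show the abstract extension $1\to\Gamma'\to\Gamma\to\Gamma/\Gamma'\to 1$ is realized by an extension of \emph{algebraic} groups (this uses the vanishing/comparison of the relevant $H^{2}$ and the functoriality of the hull), and that minimality survives --- this is precisely where torsion-freeness of $\Gamma$ is used, since a nontrivial finite subgroup centralizing $\mathcal U$ would violate minimality. Since all of this is deferred to \cite{B} and \cite{R}, your text is an honest reduction to the cited results rather than a proof; as such it does not go beyond what the paper already does by citation, except for the uniqueness argument, which you do supply in full.
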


We call the  $\Q$-defined (resp. $\R$-defined) algebraic group  as in this theorem  the {\it algebraic hull} of $\Gamma$ (resp. $G$).
We denote it by ${\mathcal A}_{\Gamma} $ (resp. ${\mathcal A}_G$).

Let $k$ be a  subfield $k\subset \C$ (resp. $k= \R$ or $\C$).  Let $\rho :\Gamma\to GL(V_{k})$ (resp. $\rho :G\to GL(V_{k})$) be a finite-dimensional representation.
\begin{proposition}{\rm(\cite[Proposition A.6]{B})}\label{miniex}
$\rho$ can be extended to a $k$-rational representation of  ${\mathcal A}_{\Gamma}$ (resp. ${\mathcal A}_{G}$) if the Zariski-closure of  the image of $\rho$ is  minimal. 
\end{proposition}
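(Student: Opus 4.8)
The plan is to realise the algebraic hull $\mathcal A_{\Gamma}$, together with the desired extension of $\rho$, as the Zariski closure of a graph. Write $\mathcal H\subset GL(V)$ for the Zariski closure of $\rho(\Gamma)$; since $\rho(\Gamma)\subset GL(V_{k})$ and $k$ is perfect, $\mathcal H$ is $k$-defined, and by hypothesis it is minimal. Using the inclusion $\Gamma\subset\mathcal A_{\Gamma}(\Q)$, consider the homomorphism $\Gamma\to\mathcal A_{\Gamma}\times\mathcal H$ sending $\gamma$ to $(\gamma,\rho(\gamma))$, and let $\mathcal G$ be the Zariski closure of its image; as $\Q\subset k$, the group $\mathcal G$ is $k$-defined. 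The two coordinate projections restrict to $k$-defined homomorphisms $p_{1}\colon\mathcal G\to\mathcal A_{\Gamma}$ and $p_{2}\colon\mathcal G\to\mathcal H$, each surjective because the image of a morphism of algebraic groups is closed and here contains a Zariski-dense subgroup ($\Gamma$, resp. $\rho(\Gamma)$). If $p_{1}$ is an isomorphism, then $p_{2}\circ p_{1}^{-1}\colon\mathcal A_{\Gamma}\to\mathcal H\subset GL(V)$ is a $k$-rational representation restricting to $\rho$ on $\Gamma$, which is the assertion. The case of a simply connected solvable Lie group $G$ is entirely analogous, with $\Q$ replaced by $\R$ and ${\rm rank}\,\Gamma$ by $\dim G$.

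First I would prove that $p_{1}$ is injective. Since $p_{1}$ is surjective, $p_{1}(\mathcal U(\mathcal G))$ is a normal unipotent subgroup of $\mathcal A_{\Gamma}$ and $\mathcal A_{\Gamma}/p_{1}(\mathcal U(\mathcal G))$, being a quotient of the reductive group $\mathcal G/\mathcal U(\mathcal G)$, is reductive; hence $p_{1}(\mathcal U(\mathcal G))=\mathcal U(\mathcal A_{\Gamma})$, and likewise $p_{2}(\mathcal U(\mathcal G))=\mathcal U(\mathcal H)$. Now $\dim\mathcal U(\mathcal G)\ge\dim\mathcal U(\mathcal A_{\Gamma})={\rm rank}\,\Gamma$ by fullness of $\Gamma\subset\mathcal A_{\Gamma}$, while $\dim\mathcal U(\mathcal G)\le{\rm rank}\,\Gamma$ by Lemma \ref{hut} applied to $\gamma\mapsto(\gamma,\rho(\gamma))$; thus $\dim\mathcal U(\mathcal G)={\rm rank}\,\Gamma$ and $p_{1}$ restricts to a surjection $\mathcal U(\mathcal G)\to\mathcal U(\mathcal A_{\Gamma})$ of connected unipotent groups of equal dimension, which in characteristic $0$ is an isomorphism; in particular $\ker p_{1}\cap\mathcal U(\mathcal G)=\{1\}$. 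Because $\ker p_{1}$ is normal in $\mathcal G$ we get $[\ker p_{1},\mathcal U(\mathcal G)]\subset\ker p_{1}\cap\mathcal U(\mathcal G)=\{1\}$, that is $\ker p_{1}\subset Z_{\mathcal G}(\mathcal U(\mathcal G))$; applying $p_{2}$ and using $p_{2}(\mathcal U(\mathcal G))=\mathcal U(\mathcal H)$ yields $p_{2}(\ker p_{1})\subset Z_{\mathcal H}(\mathcal U(\mathcal H))\subset\mathcal U(\mathcal H)$, the last inclusion being precisely the minimality of $\mathcal H$. Since $\ker p_{1}\subset\{1\}\times\mathcal H$, the homomorphism $p_{2}$ is injective on $\ker p_{1}$, so $\ker p_{1}$ is isomorphic to a closed subgroup of the unipotent group $\mathcal U(\mathcal H)$ and is therefore unipotent; being normal in $\mathcal G$ it lies in $\mathcal U(\mathcal G)$, and so $\ker p_{1}=\{1\}$.

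It then remains to note that the bijective $k$-defined homomorphism $p_{1}$ is an isomorphism of algebraic groups (a bijective homomorphism of algebraic groups in characteristic $0$ is an isomorphism), and that its inverse is again $k$-defined (a $k$-morphism which becomes an isomorphism after scalar extension to $\C$ is an isomorphism over $k$). Composing with $p_{2}$ gives the required $k$-rational extension of $\rho$ to $\mathcal A_{\Gamma}$.

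I expect the main obstacle — and the only point where minimality of $\mathcal H$ really enters — to be the step "$\ker p_{1}$ centralises $\mathcal U(\mathcal G)$" $\Rightarrow$ "$\ker p_{1}$ is unipotent": without minimality there is no reason for a subgroup centralising the unipotent radical to be unipotent, and the statement genuinely fails then (take $\rho$ with image in a torus). The only other delicate point, keeping track of fields of definition, is handled by the perfectness of $k$ together with the descent remark above.
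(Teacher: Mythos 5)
Your argument is correct: the graph construction $\mathcal G\subset\mathcal A_{\Gamma}\times\mathcal H$, the dimension count $\dim\mathcal U(\mathcal G)={\rm rank}\,\Gamma$ forcing $p_{1}\vert_{\mathcal U(\mathcal G)}$ to be an isomorphism, and the use of minimality to show that the central (hence, after projecting by $p_{2}$, unipotent) normal subgroup $\ker p_{1}$ is trivial all go through, with the fields-of-definition issues handled correctly in characteristic $0$. The paper itself gives no proof of this proposition but simply cites \cite[Proposition A.6]{B}, and your proof is essentially the standard rigidity argument for the algebraic hull that appears there, so nothing further is needed.
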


In general,  a finite-dimensional representation $\rho$ can not be extended to a $k$-rational representation of   ${\mathcal A}_{\Gamma}$.
In this paper we consider the following construction.
\begin{definition}
Consider the inclusion $i_{\Gamma}:\Gamma\hookrightarrow \mathcal A_{\Gamma}(\Q)$ (resp. $i_{G}:G\hookrightarrow \mathcal A_{G}(\R)$)
and the representation $i_{\Gamma}\oplus \rho :\Gamma\to \mathcal A_{\Gamma}(k)\times GL(V_{k})$ (resp. $i_{G}\oplus \rho :G\to \mathcal A_{G}(k)\times GL(V_{k})$).
Then we denote by $\mathcal G_{\Gamma}^{\rho}$ (resp. $\mathcal G_{G}^{\rho}$) the Zariski-closure of the image of $i_{\Gamma}\oplus \rho$ (resp. $i_{G}\oplus \rho$)
and we call it the $\rho$-{\it relative algebraic hull} of $\Gamma$ (resp. $G$).
\end{definition}

We can easily show the following properties.
\begin{proposition}\label{relproa}
\begin{itemize}

\item $\rho :\Gamma\to GL(V_{k})$  (resp. $\rho :G\to GL(V_{k})$) can be extended to a $k$-rational representation $\mathcal G_{\Gamma}^{\rho}\to GL(V)$ (resp. $\mathcal G_{G}^{\rho}\to GL(V)$).

\item We have an isomorphism $\mathcal U(\mathcal G_{\Gamma}^{\rho})\cong \mathcal U(\mathcal A_{\Gamma}^{\rho})$ of $k$-defined algebraic groups.

\item $i_{\Gamma}\oplus \rho:\Gamma\to \mathcal G_{\Gamma}^{\rho}$ (resp. $i_{G}\oplus \rho: G\to \mathcal G_{G}^{\rho}$)  is an injective full representation.

\item If the Zariski-closure of $\rho(\Gamma)$ (resp. $\rho(G)$ ) is minimal,
then $\mathcal G_{\Gamma}^{\rho}= \mathcal A_{\Gamma}$.
(resp. $\mathcal G_{G}^{\rho}= \mathcal A_{G}$.)
\end{itemize}

\end{proposition}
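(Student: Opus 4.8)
The plan is to establish the four bullets in turn, working with a torsion-free virtually polycyclic group $\Gamma$; the case of a simply connected solvable Lie group $G$ is entirely parallel, replacing ${\rm rank}\,\Gamma$ by $\dim G$ and using Lemma \ref{hut} and Proposition \ref{miniex} in their Lie-group forms. Two of the bullets are immediate from the construction. By definition $\mathcal G_{\Gamma}^{\rho}$ is the Zariski-closure of $(i_{\Gamma}\oplus\rho)(\Gamma)$ in $\mathcal A_{\Gamma}\times GL(V)$, and since $i_{\Gamma}(\Gamma)\subset \mathcal A_{\Gamma}(\Q)$ and $\rho(\Gamma)\subset GL(V_{k})$ this subgroup consists of $k$-points, so $\mathcal G_{\Gamma}^{\rho}$ is $k$-defined (we are in characteristic zero); the second projection $\mathcal A_{\Gamma}\times GL(V)\to GL(V)$ then restricts to a $k$-rational homomorphism $\mathcal G_{\Gamma}^{\rho}\to GL(V)$ extending $\rho$, which is the first bullet. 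For the third bullet, injectivity of $i_{\Gamma}\oplus\rho$ is clear since $i_{\Gamma}$ is already injective, and Zariski-density of the image is the definition of $\mathcal G_{\Gamma}^{\rho}$; fullness is the dimension equality coming from the second bullet, which I treat next.

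The one point with genuine content is the second bullet. Let $p\colon \mathcal G_{\Gamma}^{\rho}\to \mathcal A_{\Gamma}$ be the first projection; its image is Zariski-closed (a homomorphic image of an algebraic group) and contains the Zariski-dense set $i_{\Gamma}(\Gamma)$, so $p$ is a surjective $k$-defined homomorphism. Over a field of characteristic zero such a homomorphism carries the unipotent radical \emph{onto} the unipotent radical: $p(\mathcal U(\mathcal G_{\Gamma}^{\rho}))$ is a connected normal unipotent subgroup of $\mathcal A_{\Gamma}$, hence lies in $\mathcal U(\mathcal A_{\Gamma})$, while $\mathcal A_{\Gamma}/p(\mathcal U(\mathcal G_{\Gamma}^{\rho}))$ is a quotient of the reductive group $\mathcal G_{\Gamma}^{\rho}/\mathcal U(\mathcal G_{\Gamma}^{\rho})$, hence reductive, which forces the reverse inclusion. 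So $p$ restricts to a surjection $\mathcal U(\mathcal G_{\Gamma}^{\rho})\to \mathcal U(\mathcal A_{\Gamma})$, and by fullness of $i_{\Gamma}$ we get $\dim \mathcal U(\mathcal G_{\Gamma}^{\rho})\ge \dim \mathcal U(\mathcal A_{\Gamma})={\rm rank}\,\Gamma$. On the other hand $(i_{\Gamma}\oplus\rho)(\Gamma)$ is Zariski-dense in $\mathcal G_{\Gamma}^{\rho}$, so Lemma \ref{hut} gives the reverse inequality $\dim \mathcal U(\mathcal G_{\Gamma}^{\rho})\le {\rm rank}\,\Gamma$. Hence the two dimensions coincide; the kernel of $\mathcal U(\mathcal G_{\Gamma}^{\rho})\to \mathcal U(\mathcal A_{\Gamma})$ is then a finite subgroup of a unipotent group in characteristic zero, so it is trivial, and a bijective homomorphism of algebraic groups in characteristic zero is an isomorphism. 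This gives $\mathcal U(\mathcal G_{\Gamma}^{\rho})\cong \mathcal U(\mathcal A_{\Gamma})$ over $k$, and the equality $\dim \mathcal U(\mathcal G_{\Gamma}^{\rho})={\rm rank}\,\Gamma$ finishes the fullness part of the third bullet.

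For the fourth bullet, assume the Zariski-closure of $\rho(\Gamma)$ is minimal. By Proposition \ref{miniex}, $\rho$ extends to a $k$-rational representation $\tilde\rho\colon \mathcal A_{\Gamma}\to GL(V)$ with $\tilde\rho\circ i_{\Gamma}=\rho$. The graph morphism $g\mapsto(g,\tilde\rho(g))$ is a $k$-defined closed immersion $\mathcal A_{\Gamma}\hookrightarrow\mathcal A_{\Gamma}\times GL(V)$, since it has the left inverse $p$, and its image is a closed $k$-subgroup carried isomorphically onto $\mathcal A_{\Gamma}$ by $p$. For $\gamma\in\Gamma$ we have $(i_{\Gamma}\oplus\rho)(\gamma)=(i_{\Gamma}(\gamma),\tilde\rho(i_{\Gamma}(\gamma)))$, which lies on this graph; since $i_{\Gamma}(\Gamma)$ is Zariski-dense in $\mathcal A_{\Gamma}$, passing to Zariski-closures identifies $\mathcal G_{\Gamma}^{\rho}$ with the graph, that is, $\mathcal G_{\Gamma}^{\rho}=\mathcal A_{\Gamma}$. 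I expect essentially all of the difficulty to sit in the second bullet, where the argument succeeds only because the lower bound produced by surjectivity of $p$ on unipotent radicals and the upper bound produced by Lemma \ref{hut} agree; granting that, the other bullets are formal consequences of the construction and of Proposition \ref{miniex}.
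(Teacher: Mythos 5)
Your proposal is correct and follows essentially the same route as the paper: the first and fourth bullets come from the projection to $GL(V)$ and Proposition \ref{miniex} respectively, and the key second bullet is obtained exactly as in the paper by combining surjectivity of $\mathcal U(\mathcal G_{\Gamma}^{\rho})\to\mathcal U(\mathcal A_{\Gamma})$ with the squeeze ${\rm rank}\,\Gamma\ge\dim\mathcal U(\mathcal G_{\Gamma}^{\rho})\ge\dim\mathcal U(\mathcal A_{\Gamma})={\rm rank}\,\Gamma$ from Lemma \ref{hut} and fullness of $i_{\Gamma}$. The only cosmetic difference is that you conclude the isomorphism by noting the kernel is a finite subgroup of a unipotent group, whereas the paper passes through the Lie algebra isomorphism and the exponential maps; both are valid.
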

\begin{proof}

The restriction $\mathcal G_{\Gamma}^{\rho}\to GL(V)$ of the projection $\mathcal A_{\Gamma}\times GL(V)\to  GL(V)$ gives the first assertion.

Consider the restriction $\mathcal G_{\Gamma}^{\rho}\to \mathcal A_{\Gamma}$ of the projection $\mathcal A_{\Gamma}\times GL(V)\to \mathcal A_{\Gamma}$.
Since the inclusion $i_{\Gamma}:\Gamma\to \mathcal A_{\Gamma}$ has the Zariski-dense image,
the map $\mathcal G_{\Gamma}^{\rho}\to \mathcal A_{\Gamma}$ is surjective and hence the restriction
${\mathcal U}(\mathcal G_{\Gamma}^{\rho})\to \mathcal U(\mathcal A_{\Gamma})$ is also surjective.
By Lemma \ref{hut}, we have 
\[{\rm rank}\,\Gamma\ge \dim {\mathcal U}(\mathcal G_{\Gamma}^{\rho})\ge\dim\mathcal U(\mathcal A_{\Gamma})={\rm rank}\,\Gamma
\]
and so $\dim {\mathcal U}(\mathcal G_{\Gamma}^{\rho})=\dim\mathcal U(\mathcal A_{\Gamma})$.
Hence the map ${\mathcal U}(\mathcal G_{\Gamma}^{\rho})\to \mathcal U(\mathcal A_{\Gamma})$ induces an isomorphism of Lie algebras.
By using the exponential maps, we can show that the map ${\mathcal U}(\mathcal G_{\Gamma}^{\rho})\to \mathcal U(\mathcal A_{\Gamma})$ is also an isomorphism.
Hence the second and third assertions hold.

The fourth  follows from Proposition \ref{miniex}.

\end{proof}

For a simply connected solvable Lie group $G$,
we can directly construct the algebraic hull $\mathcal A_{G}$ of $G$ by using the Lie algebra $\g$.
Let $\n$ be the nilradical (i.e. maximal nilpotent ideal) of $\g$.
There exists a subvector space (not necessarily Lie algebra) $V$ of $\g$ so that
$\g=V\oplus \n$ as the direct sum of vector spaces and for any  $A,B\in V$ $({\rm ad}_A)_{s}(B)=0$ where $(ad_A)_{s}$  is the semi-simple part of ${\rm ad}_{A}$ (see \cite[Proposition I\hspace{-.1em}I\hspace{-.1em}I.1.1] {DER}).
We define the  map ${\rm ad}_{s}:\g\to D(\g)$ as 
${\rm ad}_{sA+X}=({\rm ad}_{A})_{s}$ for $A\in V$ and $X\in \n$.
Then we have $[{\rm ad}_{s}(\g), {\rm ad}_{s}(\g)]=0$ and ${\rm ad}_{s}$ is linear (see \cite[Proposition I\hspace{-.1em}I\hspace{-.1em}I.1.1] {DER}).
Since we have $[\g,\g]\subset \n$,  the  map ${\rm ad}_{s}:\g\to D(\g)$ is a representation and the image ${\rm ad}_{s}(\g)$ is Abelian and consists of semi-simple elements.
We take the Lie group homomorphism ${\rm Ad}_{s}:G\to {\rm Aut}(\g)$ which corresponds to the Lie algebra homomorphism  ${\rm ad}_{s}$.
We define the nilpotent Lie algebra 
\[\frak u=\{X-{\rm ad}_{sX}\in {\rm Im}\, {\rm ad}_{s}\ltimes \g\}
\]
which satisfies ${\rm Im}\, {\rm ad}_{s}\ltimes \g={\rm Im}\, {\rm ad}_{s}\ltimes \frak u$.
By the exponential map, we have the unipotent $\R$-defined algebraic group $\mathcal U$ corresponding to the Lie algebra $\frak u$.
We have ${\rm Ad}_{s}(G)\subset {\rm Aut}(\mathcal U)$ and $ {\rm Aut}(\mathcal U)$ is an $\R$-defined algebraic group.
Take the Zariski-closure $\mathcal T$ of ${\rm Ad}_{s}(G)$ in ${\rm Aut}(\mathcal U)$.
\begin{proposition}{\rm (\cite[Proposition 2.4]{Kas})}\label{PAL} 
The $\R$-defined algebraic group $\mathcal T\ltimes \mathcal U$ is an algebraic hull of $G$.
\end{proposition}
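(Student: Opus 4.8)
The plan is to check that $\mathcal T\ltimes\mathcal U$ has the defining features of the algebraic hull of $G$: it is an $\R$-defined \emph{minimal} algebraic group admitting $G$ as a Zariski-dense full Lie subgroup; then the uniqueness statement in the theorem above identifies it with $\mathcal A_{G}$.

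First I would produce the inclusion. The embedding of $\g$ as the ideal of ${\rm Im}\,{\rm ad}_{s}\ltimes\g$, read through the equality ${\rm Im}\,{\rm ad}_{s}\ltimes\g={\rm Im}\,{\rm ad}_{s}\ltimes\frak u$ and the chain ${\rm Im}\,{\rm ad}_{s}\ltimes\frak u\subset{\rm Lie}(\mathcal T)\ltimes\frak u={\rm Lie}(\mathcal T\ltimes\mathcal U)$, realizes $\g$ as the Lie subalgebra $\iota(\g)=\{{\rm ad}_{sX}+(X-{\rm ad}_{sX}):X\in\g\}$. Since $G$ is simply connected, this monomorphism of Lie algebras integrates to an injective homomorphism of Lie groups $\psi\colon G\to(\mathcal T\ltimes\mathcal U)(\R)$, which I would check realizes $G$ as a closed Lie subgroup. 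Composing $\psi$ with the projection $\mathcal T\ltimes\mathcal U\to\mathcal T$ returns ${\rm Ad}_{s}\colon G\to{\rm Ad}_{s}(G)$, whose image is Zariski-dense in $\mathcal T$ by the definition of $\mathcal T$.

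Next, fullness. The group ${\rm Ad}_{s}(G)$ is connected, commutative, and consists of semisimple automorphisms of $\mathcal U$, hence is simultaneously diagonalizable, so its Zariski closure $\mathcal T$ is an algebraic torus; consequently $\mathcal U$ is the unipotent radical of $\mathcal T\ltimes\mathcal U$ and $\dim\mathcal U(\mathcal T\ltimes\mathcal U)=\dim\frak u=\dim\g=\dim G$. For the Zariski-density of $\psi(G)$, let $\bar G$ be its Zariski closure. Since $\bar G$ is algebraic, ${\rm Lie}(\bar G)$ is stable under Jordan decomposition; and for $A$ in the complement $V$ the summands ${\rm ad}_{sA}\in{\rm Lie}(\mathcal T)$ and $A-{\rm ad}_{sA}\in\frak u$ of $\iota(A)$ are respectively semisimple and nilpotent and commute, because $[{\rm ad}_{sA},A-{\rm ad}_{sA}]=({\rm ad}_{A})_{s}(A)=0$ — exactly the defining property of $V$ from \cite{DER}. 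Hence ${\rm ad}_{sA}+(A-{\rm ad}_{sA})$ is the Jordan decomposition of $\iota(A)$, so $A-{\rm ad}_{sA}\in{\rm Lie}(\bar G)$ for every $A\in V$; together with $\iota(\n)=\n\subset{\rm Lie}(\bar G)$ and the identity $X-{\rm ad}_{sX}=(A-{\rm ad}_{sA})+N$ for $X=A+N$ with $A\in V$, $N\in\n$, this forces $\frak u\subset{\rm Lie}(\bar G)$, i.e. $\mathcal U\subset\bar G$. As $\bar G$ also surjects onto $\mathcal T$, we get $\bar G=\mathcal T\ltimes\mathcal U$, so $\psi$ is a full representation.

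Finally, minimality and the conclusion. If $tu$ with $t\in\mathcal T$ and $u\in\mathcal U$ centralizes $\mathcal U$, then the automorphism of $\mathcal U$ induced by $t$ coincides with conjugation by $u^{-1}$; the latter is unipotent and the former semisimple, so both are trivial, whence $t=e$ (because $\mathcal T$ is embedded in ${\rm Aut}(\mathcal U)$) and $u\in Z(\mathcal U)\subset\mathcal U$. Thus $Z_{\mathcal T\ltimes\mathcal U}(\mathcal U)\subset\mathcal U$ and $\mathcal T\ltimes\mathcal U$ is minimal; being visibly $\R$-defined, it is an $\R$-defined minimal algebraic group carrying $G$ as a Zariski-dense full Lie subgroup, and uniqueness identifies it with $\mathcal A_{G}$. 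I expect the Zariski-density in the second step to be the real obstacle: a crude dimension count does not suffice, since the torus direction collapses on passing to $\psi(G)$, so one genuinely needs the Jordan-decomposition argument together with the structure \cite{DER} of the complement $V$.
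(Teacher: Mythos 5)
The paper does not prove Proposition \ref{PAL}; it quotes it from \cite[Proposition 2.4]{Kas}, so there is no internal proof to compare against. Your argument is the standard verification and is essentially correct: you check the three defining properties of the hull (Zariski-dense embedding, $\dim\mathcal U(\mathcal T\ltimes\mathcal U)=\dim G$, and minimality) and invoke uniqueness, which is exactly how this identification is made in the cited source and in the Mostow--Auslander--Dekimpe ``semisimple splitting'' tradition. The one genuinely delicate point --- Zariski-density of $\psi(G)$ --- you handle correctly: you restrict the Jordan-decomposition argument to $A\in V$, where $[{\rm ad}_{sA},A-{\rm ad}_{sA}]=({\rm ad}_A)_s(A)=0$ holds by the defining property of $V$ from \cite{DER}, and recover general $X-{\rm ad}_{sX}$ by adding $\iota(\n)=\n$; applying it to arbitrary $X\in\g$ would fail since $({\rm ad}_A)_s(N)$ need not vanish. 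Two steps are only gestured at and should be backed by standard references: that $\psi$ is injective with closed image (this follows from the fact, used later in the paper, that the projection $\mathcal T\ltimes\mathcal U\to\mathcal U$ restricts to a diffeomorphism of $G$ onto $\mathcal U(\R)$), and that the Lie algebra of an algebraic group is closed under Jordan decomposition (Chevalley; see \cite{Bor}). Also, a priori $\bar G$ only surjects onto a dense subgroup of $\mathcal T$; one concludes $\pi(\bar G)=\mathcal T$ because images of morphisms of algebraic groups are closed. With those citations supplied, the proof is complete.
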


Suppose $G$ admits a lattice (i.e. cocompact discrete subgroup) $\Gamma$.
A discrete subgroup $\Gamma$ of a simply connected solvable Lie group $G$ is torsion-free polycyclic with ${\rm rank} \, \Gamma\le \dim G$.
It is known that $\Gamma$ is a lattice  if and only if ${\rm rank} \, \Gamma= \dim G$ (see \cite[Lemma 3.4]{OB}).
It is known  that  for  the algebraic hull   $\mathcal A_{G}$ of $G$,
 the Zariski-closure of $\Gamma$ in $\mathcal A_{G}$  is isomorphic to  the algebraic hull $\mathcal A_{\Gamma}$ of $\Gamma$ as an $\R$-defined algebraic group (see \cite[Proof of Theorem 4.34]{R}).
But it does not coincide with  $\mathcal A_{G}$ in general.

Let $k= \R$ or $\C$ and $\rho :G\to GL(V_{k})$  a finite-dimensional representation.
Consider the $\rho$-relative algebraic hull  $\mathcal G_{G}^{\rho}$.
Since the Zariski-closure of $\Gamma$ in $\mathcal A_{G}$  is   the algebraic hull $\mathcal A_{\Gamma}$ of $\Gamma$ as an $\R$-defined algebraic group,
the Zariski-closure of $\Gamma$ in $\mathcal G_{G}^{\rho}$ is identified with the $\rho$-relative algebraic hull  $\mathcal G_{\Gamma}^{\rho}$.
Hence we have the commutative diagram
\[\xymatrix{
\mathcal G_{\Gamma}^{\rho}\ar[r]&\mathcal G_{G}^{\rho}\\
\Gamma\ar[u]^{i_{G}\oplus \rho}\ar[r]& G\ar[u]_{i_{G}\oplus \rho}
}
\]
and each map is an inclusion.
We have $\mathcal U(\mathcal G_{G}^{\rho})=\mathcal U(\mathcal G_{\Gamma}^{\rho})$ but we do not have  $\mathcal G_{\Gamma}^{\rho}=\mathcal G_{G}^{\rho}$ in general.
\begin{proposition}\label{mok}
The following two conditions are equivalent.
\begin{itemize}
\item $\mathcal G_{\Gamma}^{\rho}=\mathcal G_{G}^{\rho}$.

\item $\rho$ is $\Gamma$-admissible i.e.  for the representation $
 {\rm Ad} \oplus \rho :G\to {\rm Aut}(\g\otimes\C) \times  GL(V)$,  
$({\rm Ad}\oplus\rho)(G)$ and $({\rm Ad}\oplus\rho)(\Gamma)$ have the same Zariski-closure in $ {\rm Aut}(\g\otimes \C) \times  GL(V)$.
\end{itemize}
\end{proposition}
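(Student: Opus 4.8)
The idea is to show that both conditions are equivalent to a single third one: that $\Gamma$ and $G$ have the same Zariski closure inside a certain algebraic torus, namely the maximal reductive quotient of $\mathcal G_G^{\rho}$. We may extend scalars to $\C$; all the algebraic groups below are then complex, and (being Zariski closures of images of the solvable groups $\Gamma$, $G$) solvable --- those built from $G$ also being connected, those built from $\Gamma$ possibly not. For any finite-dimensional representation $\mu$ of $G$, the reasoning preceding Proposition \ref{mok} applies verbatim to $\mu$ in place of $\rho$ and gives $\mathcal G_{\Gamma}^{\mu}\subset\mathcal G_{G}^{\mu}$ and $\mathcal U(\mathcal G_{\Gamma}^{\mu})=\mathcal U(\mathcal G_{G}^{\mu})=:\mathcal U_{\mu}$. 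Since $\mathcal U_{\mu}\subset\mathcal G_{\Gamma}^{\mu}$, it follows at once that $\mathcal G_{\Gamma}^{\mu}=\mathcal G_{G}^{\mu}$ if and only if $\Gamma$ and $G$ have the same Zariski closure in the torus $\mathcal S^{\mu}:=\mathcal G_{G}^{\mu}/\mathcal U_{\mu}$.

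The next step is to identify $\mathcal S^{\mu}$. Put $\mathcal R_{\mu}:=\overline{\mu(G)}$ and let $\mu_{s}\colon G\to\mathcal R_{\mu}/\mathcal U(\mathcal R_{\mu})$ be the quotient composite, a homomorphism onto a torus. From $\mathcal G_{G}^{\mu}\subset\mathcal A_{G}\times\mathcal R_{\mu}$ and the fact that in a connected solvable algebraic group the unipotent elements form exactly the unipotent radical, one obtains $\mathcal U_{\mu}=\mathcal G_{G}^{\mu}\cap(\mathcal U(\mathcal A_{G})\times\mathcal U(\mathcal R_{\mu}))$; hence, using Proposition \ref{PAL} --- by which $\mathcal A_{G}/\mathcal U(\mathcal A_{G})=\mathcal T=\overline{{\rm Ad}_{s}(G)}$ with $G\hookrightarrow\mathcal A_{G}\twoheadrightarrow\mathcal T$ equal to ${\rm Ad}_{s}$ --- one gets an embedding $\mathcal S^{\mu}\hookrightarrow\mathcal T\times(\mathcal R_{\mu}/\mathcal U(\mathcal R_{\mu}))$ identifying $\mathcal S^{\mu}$ with $\overline{({\rm Ad}_{s}\times\mu_{s})(G)}$, and likewise $\mathcal S_{\Gamma}^{\mu}:=\mathcal G_{\Gamma}^{\mu}/\mathcal U_{\mu}$ with $\overline{({\rm Ad}_{s}\times\mu_{s})(\Gamma)}$ (the unipotent elements of the possibly disconnected group $\mathcal G_{\Gamma}^{\mu}$ still lying in its identity component). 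Taking $\mu=\rho$ and $\mu={\rm Ad}\oplus\rho$ yields the \emph{same} tori: the copy of ${\rm Ad}_{s}$ contributed by ${\rm Ad}$ duplicates the one already supplied by $\mathcal A_{G}$. Thus I write $\mathcal S:=\mathcal S^{\rho}=\mathcal S^{{\rm Ad}\oplus\rho}=\overline{({\rm Ad}_{s}\times\rho_{s})(G)}$ and $\mathcal S_{\Gamma}:=\mathcal S_{\Gamma}^{\rho}=\mathcal S_{\Gamma}^{{\rm Ad}\oplus\rho}=\overline{({\rm Ad}_{s}\times\rho_{s})(\Gamma)}$. The same computation with $\overline{{\rm Ad}(G)}$ in place of $\mathcal A_{G}$ shows, moreover, that the maximal reductive quotients of $\mathcal Q:=\overline{({\rm Ad}\oplus\rho)(G)}$ and $\mathcal Q_{\Gamma}:=\overline{({\rm Ad}\oplus\rho)(\Gamma)}$ are again $\mathcal S$ and $\mathcal S_{\Gamma}$, with compatible maps from $G$ and $\Gamma$.

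Assembling: the first two steps give $\mathcal G_{\Gamma}^{\rho}=\mathcal G_{G}^{\rho}\iff\mathcal S_{\Gamma}=\mathcal S\iff\mathcal G_{\Gamma}^{{\rm Ad}\oplus\rho}=\mathcal G_{G}^{{\rm Ad}\oplus\rho}$, and it remains to match the last equality with $\Gamma$-admissibility, i.e.\ with $\mathcal Q_{\Gamma}=\mathcal Q$. If $\mathcal G_{\Gamma}^{{\rm Ad}\oplus\rho}=\mathcal G_{G}^{{\rm Ad}\oplus\rho}$, then applying the projection that forgets the $\mathcal A_{G}$-coordinate --- a morphism of algebraic groups, whose image is therefore closed and equals $\overline{({\rm Ad}\oplus\rho)(\cdot)}$ on each side --- gives $\mathcal Q_{\Gamma}=\mathcal Q$. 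Conversely, if $\mathcal Q_{\Gamma}=\mathcal Q$, then their maximal reductive quotients coincide, i.e.\ $\mathcal S_{\Gamma}=\mathcal S$ by the previous step, whence $\mathcal G_{\Gamma}^{\rho}=\mathcal G_{G}^{\rho}$. This closes the chain of equivalences and proves the proposition.

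The step I expect to be the main obstacle is the identification of $\mathcal S^{\mu}$ in the second paragraph --- in particular, that $\mathcal A_{G}/\mathcal U(\mathcal A_{G})=\overline{{\rm Ad}_{s}(G)}$ with quotient map ${\rm Ad}_{s}$, together with the analogous statement for $\overline{{\rm Ad}(G)}$. This relies on the explicit construction of the algebraic hull through ${\rm ad}_{s}$ (Proposition \ref{PAL}, \cite{DER}) and requires care, because the natural action of $\mathcal A_{G}$ on its own Lie algebra does \emph{not} restrict to ${\rm Ad}_{G}$ on $G$; the torus quotient must be read off from the semidirect decomposition $\mathcal A_{G}=\mathcal T\ltimes\mathcal U$ itself rather than from an adjoint action. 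The remaining ingredients --- unipotent elements versus unipotent radicals in connected solvable groups, surjectivity of the projections, and the behaviour of reductive quotients under surjections and under equality --- are routine.
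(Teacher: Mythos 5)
Your proof is correct and follows essentially the same route as the paper's: both arguments reduce each condition, via the equality $\mathcal U(\overline{\sigma(G)})=\mathcal U(\overline{\sigma(\Gamma)})$ and the identification (through Proposition \ref{PAL} and the construction of ${\rm Ad}_{s}$) of the maximal torus parts of $\mathcal A_{G}$ and of $\overline{{\rm Ad}(G)}$ with $\overline{{\rm Ad}_{s}(G)}$, to the single condition that $({\rm Ad}_{s}\oplus\rho_{s})(\Gamma)$ and $({\rm Ad}_{s}\oplus\rho_{s})(G)$ have the same Zariski closure in a torus. The only cosmetic difference is that you phrase this via the maximal reductive quotient $\mathcal G/\mathcal U(\mathcal G)$ where the paper works with an internal maximal torus and the projection coming from a splitting $\mathcal G=\mathcal S\ltimes\mathcal U(\mathcal G)$.
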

\begin{proof}

Let $\mathcal F$ be the Zariski-closure of ${\rm Ad}(G)$ in $ {\rm Aut}(\g\otimes \C)$.
Then the Zariski-closure $\mathcal T$ of ${\rm Ad}_{s}(G)$ is a maximal torus of  $\mathcal F$ (see \cite[Proof of Proposition 3.2]{Kas}).
Hence, for a splitting $\mathcal F=\mathcal T\ltimes \mathcal U(\mathcal F)$ with the projection $p:\mathcal F\to \mathcal T$,
the map ${\rm Ad}_{s}:G\to \mathcal T$ is considered as
the composition
\[\xymatrix{
G\ar[r]^{\rm Ad}& \mathcal F\ar[r]^{p}&\mathcal T
}.
\]
On the other hand, by Proposition \ref{PAL}, $\mathcal T $
 is a maximal torus of the algebraic hull $\mathcal A_{G}=\mathcal T\ltimes \mathcal U$
and hence ${\rm Ad}_{s}$ is the 
composition
\[\xymatrix{
G\ar[r]& \mathcal A_{G}=\mathcal T\ltimes \mathcal U\ar[r]&\mathcal T
}.
\]
Let $\mathcal F^{\prime}$ be the  Zariski-closure of $\rho(G)$ in $GL(V)$.
Take a maximal torus $\mathcal T^{\prime}$ and a splitting 
$\mathcal F^{\prime}=\mathcal T^{\prime}\ltimes \mathcal U(\mathcal F^{\prime})$ with the projection $q:\mathcal F^{\prime}\to \mathcal T^{\prime}$.
Consider ${\rm Ad}_{s}\oplus q\circ \rho\to \mathcal T\times \mathcal T^{\prime}$.
Then the Zariski-closure of $({\rm Ad}_{s}\oplus q\circ \rho)(G)$ is a maximal torus of  both the Zariski-closures of  $({\rm Ad}\oplus  \rho)(G)$ and $(i_{G}\oplus \rho)(G)$.

Let $\sigma : G\to GL(V)$ a finite-dimensional representation.
Take the Zariski-closures $\mathcal G$ and $\mathcal G^{\prime}$ of $\sigma(G)$ and $\sigma(\Gamma)$.
Then we have $\mathcal U(\mathcal G)=\mathcal U(\mathcal G^{\prime})$ (see \cite[Theorem 3.2]{R}).
Hence, for a maximal torus $\mathcal S$ of $\mathcal G$, 
$\mathcal G=\mathcal G^{\prime}$ if and only if $\mathcal S\subset\mathcal G^{\prime}$.
Taking a splitting $\mathcal G=\mathcal S\ltimes \mathcal U(\mathcal G)$ with the projection $r: \mathcal G\to \mathcal S$,
$\mathcal G=\mathcal G^{\prime}$ if and only if
$r\circ \sigma(\Gamma)$ is Zariski-dense in $\mathcal S$.
By this argument,
the two conditions in the proposition are equivalent to condition that $({\rm Ad}_{s}\oplus  q\circ \rho)(G)$  and $({\rm Ad}_{s}\oplus q\circ   \rho)(\Gamma)$ have the same Zariski-closure.
Hence the proposition holds.
\end{proof}

\section{Cohomology of algebraic groups}
Let $k$ be a subfield of $\C$.
Let $\mathcal G$ be a $k$-defined algebraic group and $\mathcal H$ a normal subgroup of $\mathcal G$.
Consider the functor $V\to V^{G}$ from the category of $k$-rational $\mathcal G$-modules to the category of $k$-vector spaces.
Regarding this functor as the composition
$V\to V^{\mathcal H}\to (V^{\mathcal H})^{\mathcal G/\mathcal H}$, we can obtain the spectral sequence $E_{\ast}^{\ast,\ast}$ such that
$E_{2}^{\ast,\ast}=H^{\ast}_{r}(\mathcal G/\mathcal H, H^{\ast}_{r}(\mathcal H,V))$ and it converges to $H^{\ast}(\mathcal G,V)$ (see \cite[Proposition 6.6]{Jan}).
Considering the  spectral sequence for   the unipotent radical $\mathcal U(\mathcal G)$,
we have the following result see \cite{Haia}.
\begin{theorem}\label{unirediso}
For a $k$-rational $\mathcal G$-module $V$, we have an isomorphism
\[H^{\ast}_{r}({\mathcal G}, V_{k})\cong
 H^{\ast}_{r}({\mathcal U}({\mathcal G}), V_{k})^{\mathcal G/{\mathcal U}(k)}.
\]
\end{theorem}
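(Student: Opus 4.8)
The plan is to run the Hochschild--Serre spectral sequence of the extension
\[1\to \mathcal U(\mathcal G)\to \mathcal G\to \mathcal G/\mathcal U(\mathcal G)\to 1\]
and to observe that it collapses because $\mathcal G/\mathcal U(\mathcal G)$ is linearly reductive. First I would specialize the construction recalled just before the theorem to $\mathcal H=\mathcal U(\mathcal G)$: writing the fixed-point functor $V\mapsto V^{\mathcal G(k)}$ on $k$-rational $\mathcal G$-modules as the composite $V\mapsto V^{\mathcal U(\mathcal G)(k)}\mapsto \bigl(V^{\mathcal U(\mathcal G)(k)}\bigr)^{(\mathcal G/\mathcal U(\mathcal G))(k)}$, \cite[Proposition 6.6]{Jan} supplies a first-quadrant spectral sequence with
\[E_2^{p,q}=H^p_r\bigl(\mathcal G/\mathcal U(\mathcal G),\,H^q_r(\mathcal U(\mathcal G),V_k)\bigr)\ \Longrightarrow\ H^{p+q}_r(\mathcal G,V_k),\]
in which $H^q_r(\mathcal U(\mathcal G),V_k)$ carries its natural structure of $k$-rational $\mathcal G/\mathcal U(\mathcal G)$-module.

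The key input is then the following: $\mathcal G/\mathcal U(\mathcal G)$ is a reductive $k$-defined algebraic group, and since $\mathrm{char}\,k=0$ it is linearly reductive, so every $k$-rational $\mathcal G/\mathcal U(\mathcal G)$-module is a union of finite-dimensional semisimple submodules. Hence the fixed-point functor on such modules is exact, and $H^p_r(\mathcal G/\mathcal U(\mathcal G),W)=0$ for all $p>0$ and all $k$-rational $W$, while $H^0_r(\mathcal G/\mathcal U(\mathcal G),W)=W^{(\mathcal G/\mathcal U(\mathcal G))(k)}$ by the very definition $H^\ast_r(\cdot,\cdot)=\mathrm{Ext}^\ast_{\cdot(k)}(k,\cdot)$. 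Applying this with $W=H^q_r(\mathcal U(\mathcal G),V_k)$ kills every column $p\neq 0$ of $E_2$ and identifies $E_2^{0,q}=H^q_r(\mathcal U(\mathcal G),V_k)^{(\mathcal G/\mathcal U(\mathcal G))(k)}$; the spectral sequence therefore degenerates at $E_2$, and reading off the surviving column gives
\[H^n_r(\mathcal G,V_k)\cong H^n_r(\mathcal U(\mathcal G),V_k)^{(\mathcal G/\mathcal U(\mathcal G))(k)},\]
which is the assertion (in the paper's notation $\mathcal G/\mathcal U(k)$).

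The only real obstacle is confirming the two facts feeding the collapse: that $\mathcal G/\mathcal U(\mathcal G)$ is linearly reductive in characteristic zero --- standard, since its identity component is reductive and finite groups are linearly reductive over fields of characteristic zero --- and that $H^q_r(\mathcal U(\mathcal G),V_k)$ is genuinely a $k$-rational module over the quotient so that the vanishing applies; both are part of the machinery of \cite{Jan}, and the computation of $H^0_r$ as $(\mathcal G/\mathcal U(\mathcal G))(k)$-invariants is immediate from the definitions. Everything else is formal bookkeeping with the Grothendieck spectral sequence; see also \cite{Haia}.
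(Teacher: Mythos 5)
Your proposal is correct and is essentially the paper's own argument: the paper likewise invokes the Grothendieck spectral sequence of \cite[Proposition 6.6]{Jan} for the composite functor $V\mapsto V^{\mathcal U(\mathcal G)}\mapsto (V^{\mathcal U(\mathcal G)})^{\mathcal G/\mathcal U(\mathcal G)}$ and lets it collapse because the quotient is reductive, citing \cite{Haia} for the details. You have merely written out explicitly the linear-reductivity-in-characteristic-zero argument that the paper delegates to its references.
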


The  extension
\[\xymatrix{
1\ar[r]&{\mathcal U}({\mathcal G})\ar[r]&\mathcal G\ar[r]&\mathcal G/{\mathcal U}({\mathcal G})\ar[r]&1
}
\]
splits  (see \cite{Mos1}).
Hence we have an affine action $\mathcal G\to {\rm Aut} \left({\mathcal U}({\mathcal G})\right)\ltimes {\mathcal U}({\mathcal G})$.
Consider the  coordinate ring $k[{\mathcal U}({\mathcal G})]$  as a $k$-rational $\mathcal G$-module.
Then by \cite[Part I. 4.7]{Jan}, for a $k$-rational $\mathcal G$-module $V$, we have
\[H^{\ast}({\mathcal G}, k[{\mathcal U}({\mathcal G})]\otimes V_{k})\cong
H^{\ast}({\mathcal U}({\mathcal G}),k[{\mathcal U}({\mathcal G})]\otimes V_{k})^{\mathcal G/{\mathcal U}({\mathcal G})(k)}\cong \left\{\begin{array}{ccccc}
  V_{k}^{\mathcal G/{\mathcal U}({\mathcal G})(k)} & \ast=0 \\
0& \ast>0 
\end{array}
\right. .
\]
Hence we have:
\begin{corollary}\label{VVVk}
For $\ast>0 $,  we have 
\[H^{\ast}_{r}({\mathcal G}, k[{\mathcal U}({\mathcal G})]\otimes V_{k})=0.
\]
\end{corollary}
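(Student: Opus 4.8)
The plan is to obtain the vanishing as the $\ast>0$ part of the isomorphism chain recorded just before the statement, which I would reconstruct in two moves. Put $W=k[\mathcal U(\mathcal G)]\otimes V_{k}$, regarded as a $k$-rational $\mathcal G$-module for the diagonal action, where $\mathcal G$ acts on $k[\mathcal U(\mathcal G)]$ through the affine action $\mathcal G\to \mathrm{Aut}\bigl(\mathcal U(\mathcal G)\bigr)\ltimes \mathcal U(\mathcal G)$ coming from the splitting of $1\to\mathcal U(\mathcal G)\to\mathcal G\to\mathcal G/\mathcal U(\mathcal G)\to 1$. First I would apply Theorem \ref{unirediso} to $W$ in place of $V_{k}$, which yields
\[
H^{\ast}_{r}(\mathcal G,W)\;\cong\;H^{\ast}_{r}\bigl(\mathcal U(\mathcal G),W\bigr)^{\mathcal G/\mathcal U(\mathcal G)(k)}.
\]
Thus it is enough to compute $H^{\ast}_{r}(\mathcal U(\mathcal G),W)$, since taking $\mathcal G/\mathcal U(\mathcal G)(k)$-invariants then produces $0$ in positive degrees.

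For the second move I would observe that, after restriction to $\mathcal U(\mathcal G)$, the action on $k[\mathcal U(\mathcal G)]$ is the translation (regular) action; hence, by the tensor identity, $W$ is an induced module $\mathrm{ind}_{1}^{\mathcal U(\mathcal G)}(V_{k})$ from the trivial subgroup. Because $\mathcal U(\mathcal G)/1\cong\mathcal U(\mathcal G)$ is affine, $\mathrm{ind}_{1}^{\mathcal U(\mathcal G)}$ is exact and carries injectives to injectives, so the composite-functor argument for $(-)^{\mathcal U(\mathcal G)}\circ\mathrm{ind}_{1}^{\mathcal U(\mathcal G)}=(-)^{1}$ gives $H^{\ast}_{r}\bigl(\mathcal U(\mathcal G),\mathrm{ind}_{1}^{\mathcal U(\mathcal G)}(V_{k})\bigr)\cong H^{\ast}(\{1\},V_{k})$, which equals $V_{k}$ for $\ast=0$ and vanishes for $\ast>0$. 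This is exactly \cite[Part I. 4.7]{Jan}, already quoted in the paragraph preceding the statement. Substituting into the displayed isomorphism gives $H^{\ast}_{r}(\mathcal G,k[\mathcal U(\mathcal G)]\otimes V_{k})=0$ for $\ast>0$, and the corollary follows.

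I do not anticipate a real obstacle: the only point that needs care is bookkeeping of module structures — confirming that the $\mathcal G$-module $k[\mathcal U(\mathcal G)]$ in the statement is the one attached to the affine action, so that its restriction to $\mathcal U(\mathcal G)$ is genuinely the regular representation and the tensor identity applies. Once that identification is in place, both the collapse supplied by Theorem \ref{unirediso} and the acyclicity of an induced module are formal.
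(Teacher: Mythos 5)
Your proposal is correct and follows essentially the same route as the paper: the displayed computation preceding the corollary is exactly the combination of Theorem \ref{unirediso} applied to $W=k[\mathcal U(\mathcal G)]\otimes V_{k}$ with the acyclicity of the (co)induced module $k[\mathcal U(\mathcal G)]\otimes V_{k}$ over $\mathcal U(\mathcal G)$ via the tensor identity from \cite[Part I. 4.7]{Jan}. Your added care about checking that the restriction to $\mathcal U(\mathcal G)$ of the affine action is the regular representation is the right point to verify, and it holds here.
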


Let $\frak u_{k}$ be the $k$-Lie algebra of ${\mathcal U}({\mathcal G})$.
For a $k$-rational $\mathcal G$-module $V$, 
we consider the Lie algebra cohomology $H^{\ast}(\frak u_{k},V_{k})$.
\begin{theorem}{\rm (\cite[Theorem 5.2]{Hoc})}\label{hocun}
We have a natural isomorphism
\[H^{\ast}_{r}(\mathcal G, V_{k})\cong H^{\ast}(\frak u_{k}, V_{k} )^{\mathcal G/\mathcal U(\mathcal G)(k)}.
\]

\end{theorem}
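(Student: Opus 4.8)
The plan is to reduce the rational cohomology of $\mathcal G$ to the Lie algebra cohomology of its unipotent radical, twisted by invariants under the reductive quotient. First I would invoke Theorem~\ref{unirediso} to replace $H^{\ast}_{r}(\mathcal G,V_k)$ by $H^{\ast}_{r}(\mathcal U(\mathcal G),V_k)^{\mathcal G/\mathcal U(\mathcal G)(k)}$, so that the whole problem is moved inside the unipotent radical: it suffices to produce a natural isomorphism $H^{\ast}_{r}(\mathcal U(\mathcal G),V_k)\cong H^{\ast}(\frak u_k,V_k)$ compatible with the $\mathcal G/\mathcal U(\mathcal G)(k)$-action, since taking $\mathcal G/\mathcal U(\mathcal G)(k)$-invariants is exact in characteristic zero (the quotient being reductive) and therefore commutes with passing to cohomology.

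The core is thus the classical comparison, for a unipotent $k$-group $\mathcal U$ with Lie algebra $\frak u_k$, between the rational (Hochschild) cohomology $H^{\ast}_{r}(\mathcal U,V_k)$ and the Chevalley--Eilenberg Lie algebra cohomology $H^{\ast}(\frak u_k,V_k)$. Here I would follow Hochschild's argument: for a unipotent group the exponential map is a polynomial isomorphism of varieties $\frak u\to\mathcal U$, so one can write down an explicit chain map between the normalized rational cochain complex $C^{\ast}_{r}(\mathcal U,V)$ (polynomial functions on $\mathcal U^{\ast}$ with values in $V$) and the Chevalley--Eilenberg complex $\Lambda^{\ast}\frak u_k^{\ast}\otimes V_k$, for instance by differentiating rational cochains at the identity, and then check it induces an isomorphism on cohomology. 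The cleanest route to the isomorphism claim, rather than a direct comparison of complexes, is a dimension-shifting / acyclic-resolution argument: the coordinate ring $k[\mathcal U]$ is a $\mathcal U$-acyclic (in fact rationally acyclic by Corollary~\ref{VVVk}, applied with $\mathcal G=\mathcal U$ so that $V_k$ is the trivial module tensored in) resolution building block on the algebraic side, and $k[\mathcal U]\cong$ the symmetric algebra on $\frak u_k^{\ast}$ is likewise acyclic for $\frak u_k$-cohomology in positive degrees with the coadjoint-type action; embedding $V_k$ into $k[\mathcal U]\otimes V_k$ and inducting on the length of a filtration of $\frak u_k$ by ideals (each successive quotient being one-dimensional, hence $\mathbb G_a$) reduces everything to the rank-one case $\mathcal U=\mathbb G_a$, where both cohomologies are computed directly: $H^0=V^{\mathcal U}=V^{\frak u}$, $H^1=V_{\frak u}$ (coinvariants), and both vanish above degree one.

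For the rank-one inductive step I would use the Hochschild--Serre spectral sequence on the algebraic side (via \cite[Proposition 6.6]{Jan}, already cited) for $1\to\mathcal U'\to\mathcal U\to\mathbb G_a\to1$ with $\mathcal U'$ a codimension-one normal subgroup, and the analogous Lie-algebra Hochschild--Serre sequence for $0\to\frak u'_k\to\frak u_k\to k\to0$; the natural map between these spectral sequences is an isomorphism on $E_2$ by the induction hypothesis applied to $\mathcal U'$ and to the one-dimensional quotient, and then \cite[Theorem 3.5]{Mc} (the spectral-sequence comparison theorem, as used in the proof of Theorem~\ref{poly1}) gives the isomorphism on the abutments. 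Throughout, one must keep track of the $\mathcal G/\mathcal U(\mathcal G)(k)$-equivariance: the filtration of $\frak u_k$ should be chosen $\mathcal G$-stable, which is possible because $\mathcal G/\mathcal U(\mathcal G)$ is reductive and $\frak u_k$ decomposes into $\mathcal G/\mathcal U(\mathcal G)$-submodules, and all the maps above are then automatically equivariant.

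The main obstacle I expect is not any single conceptual point but the bookkeeping of naturality: one needs the comparison isomorphism $H^{\ast}_{r}(\mathcal U,V_k)\cong H^{\ast}(\frak u_k,V_k)$ to be functorial in $V$ and equivariant for the ambient $\mathcal G$-action on $\mathcal U$ and $\frak u_k$, so that applying $(-)^{\mathcal G/\mathcal U(\mathcal G)(k)}$ to both sides is legitimate; this forces one to make the acyclic resolutions and the differentiation-at-the-identity chain map canonical rather than dependent on choices (e.g. of a complement or of exponential coordinates). In practice this is exactly the content of \cite[Theorem 5.2]{Hoc}, so the honest proof is to cite Hochschild for the unipotent comparison, combine it with Theorem~\ref{unirediso}, and observe that exactness of reductive invariants lets the two operations be interchanged; the only thing left to verify carefully is that Hochschild's isomorphism is indeed $\mathrm{Aut}$-equivariant in the sense needed, which follows from its naturality in the group.
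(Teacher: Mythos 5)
The paper gives no proof of this statement: it is quoted verbatim from Hochschild (\cite[Theorem 5.2]{Hoc}), so the only ``approach'' on record is the citation, which is exactly where your final paragraph lands. Your reconstruction of the underlying argument --- reduce to the unipotent radical via Theorem \ref{unirediso}, compare $H^{\ast}_{r}(\mathcal U(\mathcal G),V_{k})$ with $H^{\ast}(\frak u_{k},V_{k})$ by induction on a filtration of $\mathcal U(\mathcal G)$ with one-dimensional quotients using the two Hochschild--Serre spectral sequences (with equivariance coming from the naturality of the differentiation-at-the-identity comparison map rather than from any choice of filtration), and then commute the invariants functor, which is exact because $\mathcal G/\mathcal U(\mathcal G)$ is linearly reductive in characteristic zero, past cohomology --- is sound and is essentially Hochschild's own proof.
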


\section{de Rham cohomology of solvmanifolds  and Lie algebra cohomology}\label{MO}

Let $G$ be a simply connected solvable Lie group with a lattice $\Gamma$ and $V$ a finite-dimensional $G$-module for a Lie group representation $\rho:G\to  GL(V)$ of a finite-dimensional complex vector space $V$.
We consider the solvmanifold $\Gamma\backslash G$.
Since  we have $\pi_{1}(\Gamma\backslash G)\cong  \Gamma$,
 we have a flat vector bundle $E$ with flat connection $D$ over $\Gamma\backslash G$ whose monodromy is $\rho:\Gamma\to GL(V)$.
Denote by $A^{\ast}(\Gamma\backslash G, E)$ the cochain complex of $E$-valued differential forms on $\Gamma\backslash G$ with the differential $D$.
Since the solvmanifold $\Gamma\backslash G$ is an Eilenberg–MacLane space with the fundamental group $\Gamma$,
the de Rham cohomology $H^{\ast}(\Gamma\backslash G,E)$ is isomorphic to the group cohomology $H^{\ast}(\Gamma,V)$.
Let $\g$ be the Lie algebra of $G$ and 
 $\bigwedge \g^{\ast}\otimes V$ be the cochain complex of the Lie algebra $\g$ with values in the $\g$-module $V$.
Then  we regard $\bigwedge \g^{\ast}\otimes V$  as the cochain complex of left-$G$-invariant differential forms on $\Gamma\backslash G$  and we consider the inclusion
\[\iota:\bigwedge \g^{\ast}\otimes V\hookrightarrow A^{\ast}(\Gamma\backslash G, E).
\]
\begin{theorem}{\rm (\cite[Remark 7.30]{R})}
The inclusion
\[\iota:\bigwedge \g^{\ast}\otimes V\hookrightarrow A^{\ast}(\Gamma\backslash G, E)
\]
induces an  injection
\[H^{\ast}(\g,V)\hookrightarrow H^{\ast}(\Gamma\backslash G,E).
\]
\end{theorem}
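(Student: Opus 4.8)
The plan is to factor the induced map $\iota^{\ast}\colon H^{\ast}(\g,V)\to H^{\ast}(\Gamma\backslash G,E)$ through the continuous cohomology of $G$ and to detect its injectivity at the level of algebraic groups. Concretely, I would first combine the de Rham isomorphism $H^{\ast}(\Gamma\backslash G,E)\cong H^{\ast}(\Gamma,V)$ recalled above with the van Est isomorphism $H^{\ast}(\g,V)\cong H^{\ast}_{c}(G,V)$ --- which holds here because a simply connected solvable Lie group has trivial maximal compact subgroup, so that the complex $\bigwedge\g^{\ast}\otimes V$ of left-invariant $V$-valued forms on $G$ already computes $H^{\ast}_{c}(G,V)$ (see \cite[Chapter IX]{BW}, \cite{HM}) --- in order to identify $\iota^{\ast}$ with the restriction homomorphism $H^{\ast}_{c}(G,V)\to H^{\ast}(\Gamma,V)$ attached to the inclusion $\Gamma\subset G$. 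This identification is the standard compatibility of the van Est map with lattices: writing $A^{\ast}(\Gamma\backslash G,E)$ as the complex of $\Gamma$-equivariant $V$-valued forms on $G$, the inclusion of the $G$-equivariant ones --- namely $\iota$ --- realises the restriction from $G$ to $\Gamma$. Granting this, the theorem reduces to the injectivity of $H^{\ast}_{c}(G,V)\to H^{\ast}(\Gamma,V)$.

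To establish that injectivity I would pass to $k=\C$ and use the $\rho$-relative algebraic hulls $\mathcal{G}_{\Gamma}^{\rho}\subset\mathcal{G}_{G}^{\rho}$ of $\Gamma$ and of $G$ from Section \ref{sefull}. By Proposition \ref{relproa} the inclusions $\Gamma\subset\mathcal{G}_{\Gamma}^{\rho}(k)$ and $G\subset\mathcal{G}_{G}^{\rho}(k)$ are full representations, the representation $\rho$ extends to compatible $k$-rational representations of $\mathcal{G}_{\Gamma}^{\rho}$ and $\mathcal{G}_{G}^{\rho}$ on $V$, and $\mathcal{U}(\mathcal{G}_{\Gamma}^{\rho})=\mathcal{U}(\mathcal{G}_{G}^{\rho})$; write $\mathcal{U}$ for this common unipotent radical and $\frak u_{k}$ for its $k$-Lie algebra. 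Applying Theorem \ref{poly1} to $\Gamma\subset\mathcal{G}_{\Gamma}^{\rho}$ and Theorem \ref{lie1} to $G\subset\mathcal{G}_{G}^{\rho}$, and invoking the naturality of these isomorphisms for the commutative square built from $\Gamma\subset G$ and $\mathcal{G}_{\Gamma}^{\rho}\subset\mathcal{G}_{G}^{\rho}$, the restriction map $H^{\ast}_{c}(G,V)\to H^{\ast}(\Gamma,V)$ is identified with the map $H^{\ast}_{r}(\mathcal{G}_{G}^{\rho},V)\to H^{\ast}_{r}(\mathcal{G}_{\Gamma}^{\rho},V)$ induced by the inclusion of algebraic groups. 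By Theorem \ref{hocun} applied to each of $\mathcal{G}_{G}^{\rho}$ and $\mathcal{G}_{\Gamma}^{\rho}$,
\[H^{\ast}_{r}(\mathcal{G}_{G}^{\rho},V)\cong H^{\ast}(\frak u_{k},V)^{\mathcal{G}_{G}^{\rho}/\mathcal{U}(k)},\qquad H^{\ast}_{r}(\mathcal{G}_{\Gamma}^{\rho},V)\cong H^{\ast}(\frak u_{k},V)^{\mathcal{G}_{\Gamma}^{\rho}/\mathcal{U}(k)},\]
with the same Lie algebra $\frak u_{k}$ appearing on both sides; by the naturality of the spectral sequence of the extension $1\to\mathcal{U}\to\mathcal{G}\to\mathcal{G}/\mathcal{U}\to 1$ that underlies Theorem \ref{hocun}, the above map is the inclusion of the subspace of $\mathcal{G}_{G}^{\rho}/\mathcal{U}(k)$-invariants into the subspace of $\mathcal{G}_{\Gamma}^{\rho}/\mathcal{U}(k)$-invariants of $H^{\ast}(\frak u_{k},V)$. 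Since $\mathcal{G}_{\Gamma}^{\rho}/\mathcal{U}(k)$ is a subgroup of $\mathcal{G}_{G}^{\rho}/\mathcal{U}(k)$, this inclusion is injective, and the injectivity of $\iota^{\ast}$ follows.

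The hardest point will be the identification in the first step, i.e. verifying that $\iota$ followed by the de Rham isomorphism $H^{\ast}(\Gamma\backslash G,E)\cong H^{\ast}(\Gamma,V)$ coincides with the van Est isomorphism $H^{\ast}(\g,V)\cong H^{\ast}_{c}(G,V)$ followed by restriction to $\Gamma$. This requires comparing three cochain models --- left-invariant $V$-valued forms on $G$, continuous cochains of $G$, and a cochain model for $H^{\ast}(\Gamma,V)$ extracted from $A^{\ast}(\Gamma\backslash G,E)$ --- and checking that the resulting squares commute up to cochain homotopy; each ingredient is classical, but the diagram chase is the delicate part, and I would rely on the treatment in \cite[Chapter IX]{BW} and \cite{HM}. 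A secondary technical point is the naturality of the isomorphism of Theorem \ref{hocun} for the inclusion $\mathcal{G}_{\Gamma}^{\rho}\subset\mathcal{G}_{G}^{\rho}$, which I would deduce from the functoriality of the Hochschild--Serre spectral sequence of the extension of $\mathcal{G}$ by its unipotent radical.
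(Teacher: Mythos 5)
Your proposal is correct and coincides with the paper's own ``algebraic proof'' of this statement, which is carried out in Section \ref{cor}: identify $\iota^{\ast}$ with the restriction $H^{\ast}_{c}(G,V)\to H^{\ast}(\Gamma,V)$, apply Theorems \ref{poly1} and \ref{lie1} to the relative hulls $\mathcal G_{\Gamma}^{\rho}\subset \mathcal G_{G}^{\rho}$ sharing the unipotent radical $\mathcal U$, and read off the map as the inclusion of $\mathcal G_{G}^{\rho}/\mathcal U$-invariants into $\mathcal G_{\Gamma}^{\rho}/\mathcal U$-invariants. The only cosmetic difference is that you phrase the invariants via $H^{\ast}(\frak u_{k},V)$ (Theorem \ref{hocun}) where the paper uses $H^{\ast}_{r}(\mathcal U,V)$ (Theorem \ref{unirediso}), which are interchangeable; the paper also recalls the classical alternative of averaging against Haar measure to produce a retraction $\mu$ with $\mu\circ\iota=\mathrm{id}$, which you do not need.
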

The well-known proof of this theorem is given by geometric  techniques.
As we consider  $A^{\ast}(\Gamma\backslash G, E)=\mathcal C^{\infty}(\Gamma\backslash G)\otimes \bigwedge \g^{\ast}\otimes V$,
by using a normalized Haar measure on $G$ and integration on $\Gamma\backslash G$, we can construct the cochain complex homomorphism
\[\mu: A^{\ast}(\Gamma\backslash G, E)\to \bigwedge \g^{\ast}\otimes V
\]
such that $\mu \circ \iota ={\rm id}$.
In this paper, by using our theorems, we will give algebraic proof of this theorem.

As in Proposition \ref{mok}, we call a representation $\rho$ $\Gamma$-admissible if for the representation $
 {\rm Ad} \oplus \rho :G\to {\rm Aut}(\g\otimes\C) \times  GL(V)$,  
$({\rm Ad}\oplus\rho)(G)$ and $({\rm Ad}\oplus\rho)(\Gamma)$ have the same Zariski-closure in $ {\rm Aut}(\g\otimes \C) \times  GL(V)$.
\begin{theorem}\label{Moss}{\rm (\cite{Mosc},\cite[Theorem 7.26]{R})} 
If $\rho$ is $\Gamma$-admissible, then the inclusion 
\[\iota:\bigwedge \g^{\ast}\otimes V\hookrightarrow A^{\ast}(\Gamma\backslash G, E)
\]
induces a cohomology isomorphism
\[H^{\ast}(\g,V)\cong  H^{\ast}(\Gamma\backslash G,E).
\]
\end{theorem}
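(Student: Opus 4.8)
The plan is to identify both $H^{\ast}(\g,V)$ and $H^{\ast}(\Gamma\backslash G,E)$ with the rational cohomology of a single algebraic group, the $\rho$-relative algebraic hull, and then deduce that $\iota$ realizes the resulting isomorphism between them. Put $\mathcal G=\mathcal G_{G}^{\rho}$. By Proposition \ref{relproa}, the map $i_{G}\oplus\rho$ makes $G\subset\mathcal G(\C)$ a full representation and $V$ a $\C$-rational $\mathcal G$-module, and $\dim\mathcal U(\mathcal G)=\dim G={\rm rank}\,\Gamma$ since $\Gamma$ is a lattice. As $\rho$ is $\Gamma$-admissible, Proposition \ref{mok} gives $\mathcal G_{\Gamma}^{\rho}=\mathcal G_{G}^{\rho}=\mathcal G$; since $\mathcal G_{\Gamma}^{\rho}$ is the Zariski-closure of $\Gamma$ in $\mathcal G$, the group $\Gamma$ is Zariski-dense in $\mathcal G$, so $\Gamma\subset\mathcal G(\C)$ is a full representation as well.

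Next I would chain together: $H^{\ast}(\Gamma\backslash G,E)\cong H^{\ast}(\Gamma,V)$ because $\Gamma\backslash G$ is a $K(\Gamma,1)$; $H^{\ast}(\Gamma,V)\cong H^{\ast}_{r}(\mathcal G,V)$ by Theorem \ref{poly1} applied to $\Gamma\subset\mathcal G(\C)$; $H^{\ast}_{r}(\mathcal G,V)\cong H^{\ast}_{c}(G,V)$ by Theorem \ref{lie1} applied to $G\subset\mathcal G(\C)$; and $H^{\ast}_{c}(G,V)\cong H^{\ast}(\g,V)$ by van Est's theorem (see \cite[Chapter IX]{BW}), valid because the maximal compact subgroup of the simply connected solvable group $G$ is trivial, this last isomorphism being realized at the cochain level by the inclusion of the left-$G$-invariant forms $\bigwedge\g^{\ast}\otimes V$ into the smooth $V$-valued de Rham complex of $G$. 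All the cohomologies occurring are finite-dimensional in each degree. (One could instead replace the last two steps by Theorem \ref{hocun}, which identifies $H^{\ast}_{r}(\mathcal G,V)$ with $H^{\ast}({\frak u}_{\C},V)^{\mathcal G/\mathcal U(\mathcal G)(\C)}$, where ${\frak u}_{\C}$ is the Lie algebra of $\mathcal U(\mathcal G)$, and then match this with $H^{\ast}(\g,V)$ through the nilshadow description of $\mathcal U(\mathcal A_{G})=\mathcal U(\mathcal G)$ from the construction preceding Proposition \ref{PAL}.)

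Finally I would conclude that $\iota$ induces an isomorphism. From the averaging cochain map $\mu\colon A^{\ast}(\Gamma\backslash G,E)\to\bigwedge\g^{\ast}\otimes V$ with $\mu\circ\iota={\rm id}$ recalled just before the statement, $\iota_{\ast}$ is a split injection in each degree; combined with the equality $\dim_{\C}H^{n}(\g,V)=\dim_{\C}H^{n}(\Gamma\backslash G,E)$ coming from the chain above, a split injection between finite-dimensional spaces of equal dimension must be an isomorphism. (Alternatively, one checks directly that under the identifications $H^{\ast}(\g,V)\cong H^{\ast}_{c}(G,V)$ and $H^{\ast}(\Gamma\backslash G,E)\cong H^{\ast}(\Gamma,V)$ the map $\iota_{\ast}$ becomes the restriction $H^{\ast}_{c}(G,V)\to H^{\ast}(\Gamma,V)$, which by functoriality factors through $H^{\ast}_{r}(\mathcal G,V)$ along $\Gamma\subset G\subset\mathcal G(\C)$ and is therefore an isomorphism by Theorems \ref{lie1} and \ref{poly1}.) The steps invoking Lemma \ref{hut}, Proposition \ref{relproa} and Proposition \ref{mok} are routine given the earlier results; the part I expect to require the most care is the identification of the Lie-algebra complex $\bigwedge\g^{\ast}\otimes V$ with a complex computing $H^{\ast}_{c}(G,V)$ --- i.e. the van Est input (or its substitute via Theorem \ref{hocun} and the nilshadow) --- and, for the sharper assertion that the composite isomorphism is literally $\iota_{\ast}$, the bookkeeping that passes through the common acyclic complex $A^{\ast}(G)\otimes V$ of smooth forms on the contractible universal cover $G$, with $\iota$ being the inclusion of left-invariant cochains into $\Gamma$-invariant ones.
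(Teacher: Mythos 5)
Your proposal is correct and follows essentially the same route as the paper: the paper derives Theorem \ref{Moss} from Proposition \ref{mok} together with Theorems \ref{poly1} and \ref{lie1} (see the corollaries in Section \ref{cor} on lattices), using the identification of $\iota_{\ast}$ with the restriction map $H^{\ast}_{c}(G,V)\to H^{\ast}(\Gamma,V)$ stated at the end of Section \ref{MO}. Your alternative conclusion via the averaging map $\mu$ and a dimension count is a harmless variant of the same argument.
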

We will see that our theorems can be regarded as generalizations of this theorem.
We note that we have an isomorphism $H^{\ast}_{c}(G,V)\cong H^{\ast}(\g,V)$ and the map $H^{\ast}(\g,V)\to  H^{\ast}(\Gamma\backslash G,E)$
induced by the inclusion $\iota:\bigwedge \g^{\ast}\otimes V\hookrightarrow A^{\ast}(\Gamma\backslash G, E)$
is identified with the map $H^{\ast}_{c}(G,V)\to H^{\ast}(\Gamma,V)$ induced by the inclusion $\Gamma\subset G$.

\section{Baues' aspherical manifolds}\label{BAUSS}
Let $\Gamma$ be a torsion-free virtually polycyclic group and
$\mathcal A_{\Gamma}$  the $\Q$-algebraic hull of $\Gamma$.
Then $\mathcal A_{\Gamma}(\R)$ decomposes as a semi-direct product $\mathcal A_{\Gamma}(\R)=\mathcal A_{\Gamma}/\mathcal U(\mathcal A_{\Gamma})(\R)\ltimes \mathcal U(\mathcal A_{\Gamma})(\R)$.
Let $\frak{u}_{\R}$ be the Lie algebra of $\mathcal  U(\mathcal A_{\Gamma})(\R)$. 
Since the exponential map ${\exp}:{\frak u}_{\R} \longrightarrow \mathcal  U(\mathcal A_{\Gamma})(\R)$ is a diffeomorphism, $\mathcal  U(\mathcal A_{\Gamma})(\R)$ is diffeomorphic to $\R^n$ such that $n={\rm rank}\,\Gamma$.
For the semi-direct product $\mathcal A_{\Gamma}(\R)=\mathcal A_{\Gamma}/\mathcal U(\mathcal A_{\Gamma})(\R)\ltimes \mathcal U(\mathcal A_{\Gamma})(\R)$ and the inclusion $\Gamma\subset \mathcal A_{\Gamma}(\R)$,
we have the $\Gamma$-action on $\mathcal U(\mathcal A_{\Gamma})(\R)=\R^n$.

In \cite{B} Baues showed that  such $\Gamma$-action is properly discontinuous and 
the quotient $M_{\Gamma}=\Gamma\backslash \mathcal \R^{n}$ is a compact  aspherical manifold with $\pi_{1}(M_{\Gamma})=\Gamma$.
We call $M_{\Gamma}$ a standard $\Gamma$-manifold.
Moreover,  Baues showed that a standard $\Gamma$-manifold is unique up to diffeomorphism and 
an infra-solvmanifold with the fundamental group $\Gamma$ is diffeomorphic to the standard $\Gamma$-manifold $M_{\Gamma}$.

Let $A^{\ast}(M_{\Gamma})$ be the de Rham complex of $M_{\Gamma}$.
Then $A^{\ast}(M_{\Gamma}) $ is  the set of   the $\Gamma$-invariant differential forms ${A^{\ast}(\R^{n})}^{\Gamma}$ on $\R^{n}$. 
Let $(\bigwedge \frak{u}_{\R} ^{\ast})^{\mathcal A_{\Gamma}/\mathcal U(\mathcal A_{\Gamma})(\R)}$ be the left-invariant forms on $\R^n=\mathcal U(\mathcal A_{\Gamma})(\R)$ which are fixed by $\mathcal A_{\Gamma}/\mathcal U(\mathcal A_{\Gamma})(\R)$.
Since $\Gamma\subset \mathcal A_{\Gamma}(\R)$, we have the inclusion
\[(\bigwedge \frak{u}_{\R} ^{\ast})^{\mathcal A_{\Gamma}/\mathcal U(\mathcal A_{\Gamma})(\R)} ={A^{\ast}(\R^{n})}^{\mathcal A_{\Gamma}(\R)} \subset {A^{\ast}(\R^n)}^{\Gamma}= A^{\ast}(M_{\Gamma}).\]
By using the geometry of an infra-solvmanifold, Baues showed:

\begin{theorem}{\rm (\cite[Theorem 1.8]{B})}\label{a-t-4}
This inclusion $(\bigwedge \frak{u}_{\R} ^{\ast})^{\mathcal A_{\Gamma}/\mathcal U(\mathcal A_{\Gamma})(\R)}\subset A^{\ast}(M_{\Gamma})$
 induces a cohomology isomorphism
\[H^{\ast}(\frak u)^{\mathcal A_{\Gamma}/\mathcal U(\mathcal A_{\Gamma})(\R)}\cong H^{\ast}(M_{\Gamma},\R).
\]
\end{theorem}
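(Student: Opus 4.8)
The plan is to give an algebraic proof, deducing the statement from Theorem \ref{poly1} and the facts about rational cohomology of algebraic groups in Section 4, by realizing the map $\iota$ through the polynomial de Rham complex on $\R^{n}=\mathcal U(\mathcal A_{\Gamma})(\R)$. As a bare statement the isomorphism is almost immediate: since $M_{\Gamma}$ is a compact aspherical manifold with $\pi_{1}(M_{\Gamma})=\Gamma$, one has $H^{\ast}(M_{\Gamma},\R)\cong H^{\ast}(\Gamma,\R)$; Theorem \ref{poly1}, applied to the full representation $\Gamma\subset\mathcal A_{\Gamma}(\R)$ (regarding $\mathcal A_{\Gamma}$ as $\R$-defined) and the trivial module, gives $H^{\ast}(\Gamma,\R)\cong H^{\ast}_{r}(\mathcal A_{\Gamma},\R)$; and Theorem \ref{hocun} gives $H^{\ast}_{r}(\mathcal A_{\Gamma},\R)\cong H^{\ast}(\frak u_{\R},\R)^{\mathcal A_{\Gamma}/\mathcal U(\mathcal A_{\Gamma})(\R)}$, which, since $\mathcal A_{\Gamma}/\mathcal U(\mathcal A_{\Gamma})$ is reductive so that the functor of its invariants is exact, agrees with $H^{\ast}\big((\bigwedge\frak u_{\R}^{\ast})^{\mathcal A_{\Gamma}/\mathcal U(\mathcal A_{\Gamma})(\R)}\big)$. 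The content of the theorem is that this chain of isomorphisms is the one induced by the explicit inclusion $\iota$, and this is what the proof must supply.

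To this end I would introduce the complex $P^{\ast}:=\R[\mathcal U(\mathcal A_{\Gamma})]\otimes\bigwedge\frak u_{\R}^{\ast}$ of polynomial differential forms on $\R^{n}=\mathcal U(\mathcal A_{\Gamma})$, using the left-invariant forms as an $\R[\mathcal U(\mathcal A_{\Gamma})]$-basis. Since $\mathcal A_{\Gamma}$ normalizes $\mathcal U(\mathcal A_{\Gamma})$, its affine action on $\R^{n}$ preserves $P^{\ast}$, and identifying left-invariant forms with $\bigwedge\frak u_{\R}^{\ast}$ exhibits $P^{j}$ as the rational $\mathcal A_{\Gamma}$-module $\R[\mathcal U(\mathcal A_{\Gamma})]\otimes\bigwedge^{j}\frak u_{\R}^{\ast}$, a module of the shape $k[\mathcal U(\mathcal G)]\otimes V_{k}$. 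By the algebraic Poincar\'e lemma for affine space, $0\to\R\to P^{0}\to P^{1}\to\cdots$ is exact, so $P^{\ast}$ is a resolution of the trivial module $\R$; and by Theorem \ref{poly1} together with Corollary \ref{VVVk}, $H^{i}(\Gamma,P^{j})\cong H^{i}_{r}(\mathcal A_{\Gamma},P^{j})=0$ for $i>0$. Thus $P^{\ast}$ is a $\Gamma$-acyclic resolution of $\R$; since $P^{j}$ is infinite-dimensional, the inductive-limit reductions built into Lemma \ref{ra1} and \cite[Part~I, Lemma~4.17]{Jan} are genuinely used here.

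Now $P^{\ast}$ is a subcomplex of the smooth de Rham complex $A^{\ast}(\R^{n})$, the inclusion being a $\Gamma$-equivariant quasi-isomorphism (both sides resolve $\R$) of complexes of $\Gamma$-acyclic modules --- the sheaf of smooth $j$-forms on $M_{\Gamma}$ is fine, whence $H^{>0}(\Gamma,A^{j}(\R^{n}))=0$. Consequently the inclusion $(P^{\ast})^{\Gamma}\hookrightarrow A^{\ast}(\R^{n})^{\Gamma}=A^{\ast}(M_{\Gamma})$ is a quasi-isomorphism, both sides computing $H^{\ast}(\Gamma,\R)=H^{\ast}(M_{\Gamma},\R)$. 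It remains to identify the source: by Zariski-density of $\Gamma$ in $\mathcal A_{\Gamma}$ one has $(P^{j})^{\Gamma}=H^{0}(\Gamma,P^{j})=H^{0}_{r}(\mathcal A_{\Gamma},P^{j})=(\bigwedge^{j}\frak u_{\R}^{\ast})^{\mathcal A_{\Gamma}/\mathcal U(\mathcal A_{\Gamma})(\R)}$, the last equality being the degree-$0$ instance of the computation preceding Corollary \ref{VVVk}; and on these $\mathcal A_{\Gamma}$-invariant (in particular left-invariant) forms the de Rham differential is the Chevalley--Eilenberg differential. Therefore $(P^{\ast})^{\Gamma}$ coincides as a complex with $(\bigwedge\frak u_{\R}^{\ast})^{\mathcal A_{\Gamma}/\mathcal U(\mathcal A_{\Gamma})(\R)}=A^{\ast}(\R^{n})^{\mathcal A_{\Gamma}(\R)}$, and the inclusion $(P^{\ast})^{\Gamma}\hookrightarrow A^{\ast}(M_{\Gamma})$ is precisely the map $\iota$ of the statement; combined with the reductivity remark of the first paragraph, $H^{\ast}(\iota)$ is the asserted isomorphism $H^{\ast}(\frak u_{\R})^{\mathcal A_{\Gamma}/\mathcal U(\mathcal A_{\Gamma})(\R)}\cong H^{\ast}(M_{\Gamma},\R)$.

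The step I expect to be the crux is the $\Gamma$-acyclicity of the polynomial de Rham complex $P^{\ast}$: this is exactly where Theorem \ref{poly1} transports the vanishing $H^{>0}_{r}(\mathcal A_{\Gamma},\R[\mathcal U(\mathcal A_{\Gamma})]\otimes V)=0$ of Corollary \ref{VVVk} into a statement about the group cohomology of $\Gamma$. One must also take care that the resulting comparison is realized by the honest inclusion $(P^{\ast})^{\Gamma}\hookrightarrow A^{\ast}(M_{\Gamma})$, not merely by an abstract zig-zag of quasi-isomorphisms, and that extending scalars from $\Q$ to $\R$ does not destroy the fullness of $\Gamma\subset\mathcal A_{\Gamma}(\R)$ needed to invoke Theorem \ref{poly1}.
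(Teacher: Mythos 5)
Your proposal is correct and follows essentially the same route as the paper, which states Theorem \ref{a-t-4} as Baues' result but reproves and generalizes it as Theorem \ref{BEXT}: there too the key objects are the polynomial de Rham complex $\R[{\mathcal U}(\mathcal A_{\Gamma})]\otimes\bigwedge\frak u_{\R}^{\ast}$, its acyclicity as a resolution of $\R$, the $\Gamma$-acyclicity of each term obtained from Theorem \ref{poly1} and Corollary \ref{VVVk} (i.e.\ Corollary \ref{VANNN}), and the identification of the $\Gamma$-invariants with the $\mathcal A_{\Gamma}/\mathcal U(\mathcal A_{\Gamma})(\R)$-invariant left-invariant forms via Zariski density. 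The only cosmetic difference is that you run the comparison with the smooth de Rham complex through $\Gamma$-acyclic resolutions, while the paper phrases the same step as an acyclic sheaf resolution of the locally constant sheaf on $M_{\Gamma}$.
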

By Theorem \ref{hocun}, since $M_{\Gamma}$ is  an  aspherical manifold with $\pi_{1}(M_{\Gamma})=\Gamma$, we can say the following fact.
\begin{corollary}
We have an isomorphism
\[H^{\ast}_{r}(\mathcal A_{\Gamma},\R)\cong H^{\ast}(\Gamma,\R).
\]
\end{corollary}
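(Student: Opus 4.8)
The plan is to obtain the isomorphism by stringing together the algebraic description of rational cohomology in Theorem~\ref{hocun}, Baues' geometric computation in Theorem~\ref{a-t-4}, and the asphericity of the standard $\Gamma$-manifold $M_{\Gamma}$.

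First I would apply Theorem~\ref{hocun} to the algebraic hull $\mathcal A_{\Gamma}$ (viewed over $\R$ after base change from $\Q$) and the trivial one-dimensional rational module $V=\R$. The $k$-Lie algebra of $\mathcal U(\mathcal A_{\Gamma})$ is then exactly the nilpotent Lie algebra $\frak u_{\R}$ from Section~\ref{BAUSS}, and the quotient $\mathcal G/\mathcal U(\mathcal G)(k)$ there is $\mathcal A_{\Gamma}/\mathcal U(\mathcal A_{\Gamma})(\R)$, so Theorem~\ref{hocun} gives
\[
H^{\ast}_{r}(\mathcal A_{\Gamma},\R)\cong H^{\ast}(\frak u_{\R},\R)^{\mathcal A_{\Gamma}/\mathcal U(\mathcal A_{\Gamma})(\R)}.
\]
Here I would note that $\mathcal A_{\Gamma}/\mathcal U(\mathcal A_{\Gamma})$ is reductive, so the fixed-point functor on its rational modules is exact; consequently the invariants of the Lie algebra cohomology $H^{\ast}(\frak u_{\R},\R)$ agree with the cohomology of the invariant subcomplex $(\bigwedge \frak{u}_{\R}^{\ast})^{\mathcal A_{\Gamma}/\mathcal U(\mathcal A_{\Gamma})(\R)}$, which is precisely the domain of the map in Theorem~\ref{a-t-4}.

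Next, Theorem~\ref{a-t-4} supplies $H^{\ast}(\frak u_{\R},\R)^{\mathcal A_{\Gamma}/\mathcal U(\mathcal A_{\Gamma})(\R)}\cong H^{\ast}(M_{\Gamma},\R)$, and finally, since $M_{\Gamma}$ is a compact aspherical manifold with $\pi_{1}(M_{\Gamma})=\Gamma$, its real de Rham cohomology computes the group cohomology, $H^{\ast}(M_{\Gamma},\R)\cong H^{\ast}(\Gamma,\R)$. Composing the three isomorphisms yields the claim $H^{\ast}_{r}(\mathcal A_{\Gamma},\R)\cong H^{\ast}(\Gamma,\R)$.

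I do not expect a serious obstacle: the corollary is a formal consequence of results already established or cited. The only place that calls for a word of care is the bookkeeping in the first step — keeping the coefficient field $\R$ and the group ($\R$-points of the $\Q$-defined hull) consistent with the hypotheses of Theorem~\ref{hocun}, and invoking exactness of the invariants functor for the reductive quotient $\mathcal A_{\Gamma}/\mathcal U(\mathcal A_{\Gamma})$ so as to freely interchange ``invariants of cohomology'' with ``cohomology of invariants'' when matching up with Theorem~\ref{a-t-4}.
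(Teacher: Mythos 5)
Your proposal is correct and follows essentially the same route as the paper, which likewise derives the corollary by combining Theorem~\ref{hocun} (applied to $\mathcal A_{\Gamma}$ with trivial coefficients), Baues' Theorem~\ref{a-t-4}, and the asphericity of $M_{\Gamma}$. Your extra remark on using exactness of invariants for the reductive quotient to match the two sides is a sensible point of bookkeeping that the paper leaves implicit.
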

Our main theorems give this fact without using the geometry of an infra-solvmanifold and generalize this isomorphism for the cohomology with values in modules.

\section{Corollaries of Theorem \ref{poly1} and \ref{lie1}}\label{cor}
\subsection{Cohomologies of torsion-free virtually polycyclic groups and simply connected solvable Lie groups}
Let $k$ be a subfield of $\C$  (resp. $k= \R$ or $\C$).
Let $\Gamma$ (resp. $G$) be a torsion-free virtually polycyclic group (resp. simply connected solvable Lie group) and
$\rho :\Gamma\to GL(V_{k})$ (resp. $\rho :G\to GL(V_{k})$)  a finite-dimensional representation.
We consider the algebraic hull of $\mathcal A_{\Gamma}$ (resp. $\mathcal A_{G}$) and the $\rho$-relative algebraic hull $\mathcal G_{\Gamma}^{\rho}$ (resp. $\mathcal G_{G}^{\rho}$) .
Let $\frak u_{k}$ be the $k$-Lie algebra of  ${\mathcal U}(\mathcal A_{\Gamma})$ (resp. ${\mathcal U}(\mathcal A_{G})$).
 It is also the  Lie algebra of ${\mathcal U}(\mathcal G_{\Gamma}^{\rho})$ (resp. ${\mathcal U}(\mathcal G_{G}^{\rho})$)  by Proposition \ref{relproa}.

By Theorem \ref{poly1}, Theorem  \ref{lie1},  Proposition \ref{relproa} and Proposition \ref{hocun} we have the following results.
\begin{corollary}\label{pppis}
For a $k$-rational $\mathcal G_{\Gamma}^{\rho}$-module (resp. $\mathcal G_{G}^{\rho}$-module)  $W$ (e.g. $W=V$), we have  isomorphisms
\[H^{\ast}(\Gamma,W_{k})\cong H^{\ast}_{r}(\mathcal G_{\Gamma}^{\rho},W_{k})\cong H^{\ast}(\frak u_{k},W_{k})^{\mathcal G_{\Gamma}^{\rho}/{\mathcal U}(\mathcal G_{\Gamma}^{\rho})(k)}.
\]
(resp. 
\[H^{\ast}_{c}(G,W_{k})\cong H^{\ast}_{r}(\mathcal G_{G}^{\rho},W_{k})\cong H^{\ast}(\frak u_{k},W_{k})^{\mathcal G_{G}^{\rho}/{\mathcal U}(\mathcal G_{G}^{\rho})(k)}.
\])
\end{corollary}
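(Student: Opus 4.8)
The plan is simply to assemble three results that are already in place. First I would apply Proposition \ref{relproa}: its third bullet says that $i_{\Gamma}\oplus\rho\colon\Gamma\to\mathcal G_{\Gamma}^{\rho}$ (resp. $i_{G}\oplus\rho\colon G\to\mathcal G_{G}^{\rho}$) is an injective full representation, and its first bullet shows that $\rho$ extends to a $k$-rational representation of $\mathcal G_{\Gamma}^{\rho}$, so that $W=V$ is indeed a legitimate choice of $k$-rational $\mathcal G_{\Gamma}^{\rho}$-module. Thus we are exactly in the hypotheses of Theorem \ref{poly1} with the $k$-defined algebraic group taken to be $\mathcal G_{\Gamma}^{\rho}$, and of Theorem \ref{lie1} with it taken to be $\mathcal G_{G}^{\rho}$. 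Applying those theorems to the module $W$ gives the first isomorphism,
\[H^{\ast}(\Gamma,W_{k})\cong H^{\ast}_{r}(\mathcal G_{\Gamma}^{\rho},W_{k})\]
in the polycyclic case and $H^{\ast}_{c}(G,W_{k})\cong H^{\ast}_{r}(\mathcal G_{G}^{\rho},W_{k})$ in the Lie case.

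For the second isomorphism I would invoke Theorem \ref{hocun} directly for the algebraic group $\mathcal G_{\Gamma}^{\rho}$ (resp. $\mathcal G_{G}^{\rho}$), which yields
\[H^{\ast}_{r}(\mathcal G_{\Gamma}^{\rho},W_{k})\cong H^{\ast}(\frak u_{k},W_{k})^{\mathcal G_{\Gamma}^{\rho}/\mathcal U(\mathcal G_{\Gamma}^{\rho})(k)}\]
and similarly for $\mathcal G_{G}^{\rho}$. The only point to remark here is that the $k$-Lie algebra entering Theorem \ref{hocun} is that of $\mathcal U(\mathcal G_{\Gamma}^{\rho})$; but by the second bullet of Proposition \ref{relproa} we have $\mathcal U(\mathcal G_{\Gamma}^{\rho})\cong\mathcal U(\mathcal A_{\Gamma})$ (resp. $\mathcal U(\mathcal G_{G}^{\rho})\cong\mathcal U(\mathcal A_{G})$) as $k$-defined algebraic groups, so this Lie algebra is precisely the $\frak u_{k}$ fixed in the statement. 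Composing with the first isomorphism, and using the naturality of all three isomorphisms with respect to the inclusions of groups, proves the corollary.

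I do not expect any genuine obstacle: the substance has been front-loaded into Theorem \ref{poly1}, Theorem \ref{lie1} and the structural Proposition \ref{relproa}, and the corollary is their formal consequence. The one spot that merits a line of justification is that $\mathcal G_{\Gamma}^{\rho}$ is itself $k$-defined, so that the phrase ``$k$-rational $\mathcal G_{\Gamma}^{\rho}$-module'' is meaningful and Theorem \ref{poly1} applies verbatim; this holds because $\mathcal A_{\Gamma}$ is $\Q$-defined, hence $k$-defined, and $\rho$ has image in $GL(V_{k})$, so the Zariski closure of the image of $i_{\Gamma}\oplus\rho$ in $\mathcal A_{\Gamma}\times GL(V)$ is defined over $k$ (and likewise $\mathcal G_{G}^{\rho}$ is $\R$- or $\C$-defined). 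Everything else is bookkeeping.
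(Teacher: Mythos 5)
Your proposal is correct and matches the paper exactly: the corollary is stated there as an immediate consequence of Theorem \ref{poly1}, Theorem \ref{lie1}, Proposition \ref{relproa} and Theorem \ref{hocun}, which is precisely the assembly you describe (fullness of $i_{\Gamma}\oplus\rho$ to invoke the main theorems, then Theorem \ref{hocun} with $\frak u_{k}$ identified via the second bullet of Proposition \ref{relproa}). Your extra remark on why $\mathcal G_{\Gamma}^{\rho}$ is $k$-defined is a reasonable point of care that the paper leaves implicit.
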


By the fourth assertion of Proposition \ref{relproa},
under the good condition, we can simplify the statement and imply  Baues' isomorphism.

\begin{corollary}
If the Zariski-closure of $\rho(\Gamma)$ (resp. $\rho(G)$) is minimal,  we have  isomorphisms
\[H^{\ast}(\Gamma,V_{k})\cong H^{\ast}_{r}(\mathcal A_{\Gamma},V_k)\cong H^{\ast}(\frak u_{k},V_k)^{\mathcal A_{\Gamma}/{\mathcal U}(\mathcal A_{\Gamma})(k)}.
\]
(resp.
\[H^{\ast}_{c}(G,V_{k})\cong H^{\ast}(\mathcal A_{G},V_k)\cong H^{\ast}(\frak u_{k},V_k)^{\mathcal A_{G}/{\mathcal U}(\mathcal A_{G})(k)}.
\])
\end{corollary}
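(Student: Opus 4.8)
The plan is to deduce this corollary directly from Corollary \ref{pppis} by identifying the $\rho$-relative algebraic hull with the algebraic hull under the minimality hypothesis, so that almost no new argument is needed.

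First I would invoke the fourth assertion of Proposition \ref{relproa}: when the Zariski-closure of $\rho(\Gamma)$ (resp. $\rho(G)$) in $GL(V)$ is minimal, one has $\mathcal G_{\Gamma}^{\rho}=\mathcal A_{\Gamma}$ (resp. $\mathcal G_{G}^{\rho}=\mathcal A_{G}$) as $k$-defined algebraic groups. By the first assertion of Proposition \ref{relproa} this in particular shows that $\rho$ extends to a $k$-rational representation of $\mathcal A_{\Gamma}$ (resp. $\mathcal A_{G}$), so that $V$ is a genuine $k$-rational $\mathcal A_{\Gamma}$-module (resp. $\mathcal A_{G}$-module); and by the second assertion the $k$-Lie algebra $\frak u_{k}$ is simultaneously the Lie algebra of $\mathcal U(\mathcal G_{\Gamma}^{\rho})$ and of $\mathcal U(\mathcal A_{\Gamma})$ (resp. of $\mathcal U(\mathcal G_{G}^{\rho})$ and of $\mathcal U(\mathcal A_{G})$), so all the objects appearing in the claimed isomorphisms are unambiguously defined.

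Next I would apply Corollary \ref{pppis} with $W=V$: it furnishes isomorphisms $H^{\ast}(\Gamma,V_{k})\cong H^{\ast}_{r}(\mathcal G_{\Gamma}^{\rho},V_{k})\cong H^{\ast}(\frak u_{k},V_{k})^{\mathcal G_{\Gamma}^{\rho}/\mathcal U(\mathcal G_{\Gamma}^{\rho})(k)}$, and the analogous chain $H^{\ast}_{c}(G,V_{k})\cong H^{\ast}_{r}(\mathcal G_{G}^{\rho},V_{k})\cong H^{\ast}(\frak u_{k},V_{k})^{\mathcal G_{G}^{\rho}/\mathcal U(\mathcal G_{G}^{\rho})(k)}$ in the Lie group case. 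Substituting the equalities $\mathcal G_{\Gamma}^{\rho}=\mathcal A_{\Gamma}$ and $\mathcal G_{G}^{\rho}=\mathcal A_{G}$ obtained in the previous step into these chains yields exactly the asserted isomorphisms $H^{\ast}(\Gamma,V_{k})\cong H^{\ast}_{r}(\mathcal A_{\Gamma},V_{k})\cong H^{\ast}(\frak u_{k},V_{k})^{\mathcal A_{\Gamma}/\mathcal U(\mathcal A_{\Gamma})(k)}$ and their analogues for $G$.

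The only point requiring a moment of care — rather than a genuine obstacle — is verifying that the two isomorphisms supplied by Corollary \ref{pppis} are compatible with the identification $\mathcal G_{\Gamma}^{\rho}=\mathcal A_{\Gamma}$, i.e.\ that the action of $\mathcal G_{\Gamma}^{\rho}/\mathcal U(\mathcal G_{\Gamma}^{\rho})$ on $H^{\ast}(\frak u_{k},V_{k})$ is carried under this identification to the natural action of $\mathcal A_{\Gamma}/\mathcal U(\mathcal A_{\Gamma})$. This is immediate from the naturality underlying Corollary \ref{pppis} (which itself rests on Theorem \ref{poly1}, Theorem \ref{lie1} and the isomorphism of Theorem \ref{hocun}) together with the second and fourth assertions of Proposition \ref{relproa}. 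Since the entire substantive content — the comparison of group/continuous cohomology with rational cohomology, and of rational cohomology with invariant Lie algebra cohomology — has already been established, the proof amounts to this bookkeeping, and I would expect it to occupy only a few lines.
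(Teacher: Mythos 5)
Your proposal is correct and follows exactly the route the paper intends: the corollary is obtained from Corollary \ref{pppis} (with $W=V$) by substituting the identification $\mathcal G_{\Gamma}^{\rho}=\mathcal A_{\Gamma}$ (resp. $\mathcal G_{G}^{\rho}=\mathcal A_{G}$) supplied by the fourth assertion of Proposition \ref{relproa} under the minimality hypothesis. The paper states this deduction in one sentence without further detail, so your additional bookkeeping on compatibility of the actions is, if anything, slightly more careful than the original.
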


\begin{remark}
This statement gives an isomorphism
\[H^{\ast}(\Gamma,\Q)\cong H^{\ast}_{r}(\mathcal A_{\Gamma},\Q)\cong H^{\ast}(\frak u_{\Q},\Q)^{\mathcal A_{\Gamma}/{\mathcal U}(\mathcal A_{\Gamma})(\Q)}.
\]
For the standard $\Gamma$-manifold $M_{\Gamma}$,
in \cite[Proposition 13.6]{BG}, Baues  and   Grunewald claim the isomorphism
\[H^{\ast}(M_\Gamma,\Q)\cong H^{\ast}(\frak u_{\Q},\Q)^{\mathcal A_{\Gamma}/{\mathcal U}(\mathcal A_{\Gamma})(\Q)}.
\]
But they do not give an explicit proof. 
\end{remark}

\begin{remark}
The isomorphism in Theorem \ref{pppis} is absolutely algebraic.
In Theorem \ref{BEXT}, we will prove an extension of Baues' isomorphism from a geometric viewpoint.
\end{remark}




\begin{remark}
By Proposition \ref{PAL}, the Lie algebra of ${\mathcal U}(\mathcal A_{G})$ is given by
\[\frak u=\{X-{\rm ad}_{sX}\in {\rm Im}\, {\rm ad}_{s}\ltimes \g\}
\]
as in Section \ref{sefull}.
Since ${\mathcal U}(\mathcal A_{G})$ is  unipotent,
a rational ${\mathcal U}(\mathcal A_{G})$-module is unipotent and hence it is a nilpotent $\frak u$-module.
Hence this corollary gives  "nilpotentizations" of cohomology of solvable Lie algebras.

\end{remark}

\begin{corollary}
Let $\g$ be a  solvable real Lie algebra.
Take the nilpotent Lie algebra
\[\frak u=\{X-{\rm ad}_{sX}\in {\rm Im}\, {\rm ad}_{s}\ltimes \g\}
\]
as in Section \ref{sefull}.
Then for a finite-dimensional real $\g$-module $V$,
taking  a certain nilpotent $\frak u$-module structure on $V$,
 the Lie algebra cohomology $H^{\ast}(\g,V)$ is isomorphic to a subspace of Lie algebra cohomology $H^{\ast}(\frak u,V)$. 
\end{corollary}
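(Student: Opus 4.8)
The plan is to realize $\g$ as the Lie algebra of a simply connected solvable Lie group $G$ and to transport the question through $G$ and its relative algebraic hull. First I would integrate the given finite-dimensional $\g$-module $V$ to a Lie group representation $\rho\colon G\to GL(V)$, which is possible because $G$ is simply connected; then $H^{\ast}(\g,V)\cong H^{\ast}_{c}(G,V)$ as recorded at the end of Section \ref{MO}.

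Next, form the $\rho$-relative algebraic hull $\mathcal G_{G}^{\rho}$ of $G$ (with $k=\R$). By Proposition \ref{relproa} the inclusion $i_{G}\oplus\rho\colon G\hookrightarrow \mathcal G_{G}^{\rho}$ is a full representation, $\rho$ extends to a real rational representation $\mathcal G_{G}^{\rho}\to GL(V)$, and $\mathcal U(\mathcal G_{G}^{\rho})\cong \mathcal U(\mathcal A_{G})$. Applying Theorem \ref{lie1} gives $H^{\ast}_{r}(\mathcal G_{G}^{\rho},V)\cong H^{\ast}_{c}(G,V)$, and applying Theorem \ref{hocun} (equivalently Corollary \ref{pppis}) gives $H^{\ast}_{r}(\mathcal G_{G}^{\rho},V)\cong H^{\ast}(\frak{u}_{\R},V)^{\mathcal G_{G}^{\rho}/\mathcal U(\mathcal G_{G}^{\rho})(\R)}$, where $\frak{u}_{\R}$ is the Lie algebra of $\mathcal U(\mathcal G_{G}^{\rho})$. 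Combining the three isomorphisms, $H^{\ast}(\g,V)$ is identified with the subspace $H^{\ast}(\frak{u}_{\R},V)^{\mathcal G_{G}^{\rho}/\mathcal U(\mathcal G_{G}^{\rho})(\R)}$ of $H^{\ast}(\frak{u}_{\R},V)$.

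It then remains to recognize $\frak{u}_{\R}$ and the module. By Proposition \ref{PAL} the Lie algebra of $\mathcal U(\mathcal A_{G})$ is exactly $\frak{u}=\{X-{\rm ad}_{sX}\in {\rm Im}\,{\rm ad}_{s}\ltimes\g\}$, and by the second assertion of Proposition \ref{relproa} this coincides with the Lie algebra of $\mathcal U(\mathcal G_{G}^{\rho})$, so $\frak{u}_{\R}=\frak{u}$. The $\frak{u}$-action on $V$ is the one obtained by restricting the extended rational $\mathcal G_{G}^{\rho}$-action to $\mathcal U(\mathcal G_{G}^{\rho})$ and differentiating; since $\mathcal U(\mathcal G_{G}^{\rho})$ is unipotent, $V$ is a unipotent module and hence a nilpotent $\frak{u}$-module, which is the ``certain nilpotent $\frak{u}$-module structure'' of the statement (it is in general unrelated to the original $\g$-action, which is precisely why a new structure must be chosen).

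The argument is essentially an assembly of the results quoted above, so no new idea is needed; the step requiring the most care is the identification in the last paragraph — checking that the abstract Lie algebra $\frak{u}_{\R}$ produced by Theorem \ref{hocun} is literally the $\frak{u}$ constructed in Section \ref{sefull} (this is where Propositions \ref{PAL} and \ref{relproa} are used), and that the module structure obtained by restriction and differentiation is the nilpotent $\frak{u}$-module structure referred to in the statement rather than the original, in general incompatible, $\g$-action.
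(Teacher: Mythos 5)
Your proposal is correct and follows essentially the same route as the paper: the corollary is obtained by integrating $V$ to a representation of the simply connected group $G$, invoking $H^{\ast}_{c}(G,V)\cong H^{\ast}(\g,V)$, and then chaining Theorem \ref{lie1}, Theorem \ref{hocun} (i.e.\ Corollary \ref{pppis} for $\mathcal G_{G}^{\rho}$), Proposition \ref{relproa}, and Proposition \ref{PAL} to identify the target with the $\mathcal G_{G}^{\rho}/\mathcal U(\mathcal G_{G}^{\rho})(\R)$-invariant subspace of $H^{\ast}(\frak u,V)$ for the unipotent (hence nilpotent) module structure. Your care in distinguishing the new nilpotent $\frak u$-module structure from the original $\g$-action matches the remark the paper places just before the corollary.
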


\subsection{Cohomology of lattices in simply connected solvable Lie groups}
In this subsection,  $k= \R$ or $\C$.
Let $G$ be a simply connected solvable Lie group with a lattice $\Gamma$.
Let $\rho :G\to GL(V_{k})$ be a finite-dimensional representation.
We consider the commutative diagram
\[\xymatrix{
\mathcal G_{\Gamma}^{\rho}\ar[r]&\mathcal G_{G}^{\rho}\\
\Gamma\ar[u]^{i_{G}\oplus \rho}\ar[r]& G\ar[u]_{i_{G}\oplus \rho}
}
\]
as in Section \ref{sefull}.
Then by our results, the inclusions $i_{G}\oplus \rho:\Gamma\to \mathcal G_{\Gamma}^{\rho}$ and $i_{G}\oplus \rho:G\to \mathcal G_{G}^{\rho}$ induce isomorphisms
\[H^{\ast}(\Gamma,V_{k})\cong H^{\ast}_{r}(\mathcal G_{\Gamma}^{\rho},V_{k})\cong H^{\ast}_{r}(\mathcal U(\mathcal G_{\Gamma}^{\rho}),V_{k})^{\mathcal G_{\Gamma}^{\rho}/\mathcal U(\mathcal G_{\Gamma}^{\rho})(k)}\]
and
\[H^{\ast}_{c}(G,V_{k})\cong H^{\ast}_{r}(\mathcal G_{G}^{\rho},V_{k})\cong H^{\ast}_{r}(\mathcal U(\mathcal G_{G}^{\rho}),V_{k})^{\mathcal G_{G}^{\rho}/\mathcal U(\mathcal G_{G}^{\rho})(k)}.\]
Since we have $\mathcal U(\mathcal G_{\Gamma}^{\rho})=\mathcal U(\mathcal G_{G}^{\rho})$,
the induced map $H^{\ast}_{c}(G,V_{k})\to H^{\ast}(\Gamma,V_{k})$ is identified with the inclusion
\[H^{\ast}_{r}(\mathcal U(\mathcal G_{G}^{\rho}),V_{k})^{\mathcal G_{G}^{\rho}/\mathcal U(\mathcal G_{G}^{\rho})(k)}\subset H^{\ast}_{r}(\mathcal U(\mathcal G_{\Gamma}^{\rho}),V_{k})^{\mathcal G_{\Gamma}^{\rho}/\mathcal U(\mathcal G_{\Gamma}^{\rho})(k)}.
\]
Hence our theorem gives the following result without using the geometry of the solvmanifold $\Gamma\backslash G$.
\begin{corollary}
The inclusion $\Gamma\to G$ induces an injection
\[H^{\ast}_{c}(G,V_{k})\hookrightarrow H^{\ast}(\Gamma,V_{k}).
\]
\end{corollary}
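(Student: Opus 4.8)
The plan is to read off the statement from the structural description of the comparison map $H^{\ast}_{c}(G,V_{k})\to H^{\ast}(\Gamma,V_{k})$ assembled just above, so that essentially nothing is left beyond a naturality check. First I would apply Theorem \ref{lie1} to the representation $i_{G}\oplus\rho:G\to \mathcal G_{G}^{\rho}$, which is a full (and injective) representation by Proposition \ref{relproa}, to obtain an isomorphism $H^{\ast}_{c}(G,V_{k})\cong H^{\ast}_{r}(\mathcal G_{G}^{\rho},V_{k})$ induced by that inclusion, and Theorem \ref{poly1} to the full representation $i_{G}\oplus\rho:\Gamma\to \mathcal G_{\Gamma}^{\rho}$ to obtain $H^{\ast}(\Gamma,V_{k})\cong H^{\ast}_{r}(\mathcal G_{\Gamma}^{\rho},V_{k})$. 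Since both isomorphisms are induced by inclusions, the commutative square relating $\Gamma,G,\mathcal G_{\Gamma}^{\rho},\mathcal G_{G}^{\rho}$ from Section \ref{sefull} identifies the map in question with the restriction $H^{\ast}_{r}(\mathcal G_{G}^{\rho},V_{k})\to H^{\ast}_{r}(\mathcal G_{\Gamma}^{\rho},V_{k})$ induced by $\mathcal G_{\Gamma}^{\rho}\hookrightarrow \mathcal G_{G}^{\rho}$.

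Next I would apply Theorem \ref{unirediso} to both algebraic groups and invoke the crucial equality $\mathcal U(\mathcal G_{\Gamma}^{\rho})=\mathcal U(\mathcal G_{G}^{\rho})$ of $k$-defined algebraic groups (Proposition \ref{relproa}). This exhibits $H^{\ast}_{r}(\mathcal G_{G}^{\rho},V_{k})$ and $H^{\ast}_{r}(\mathcal G_{\Gamma}^{\rho},V_{k})$ as two subspaces of one and the same space $H^{\ast}_{r}(\mathcal U(\mathcal G_{G}^{\rho}),V_{k})$, namely the invariants under $\mathcal G_{G}^{\rho}/\mathcal U(\mathcal G_{G}^{\rho})(k)$ and under its subgroup $\mathcal G_{\Gamma}^{\rho}/\mathcal U(\mathcal G_{\Gamma}^{\rho})(k)$ (the inclusion of these quotient groups coming from $\mathcal G_{\Gamma}^{\rho}\subset\mathcal G_{G}^{\rho}$ together with the coincidence of unipotent radicals). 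Since the restriction map on $H^{\ast}_{r}(\mathcal U(-),V_{k})$ induced by the identity $\mathcal U(\mathcal G_{\Gamma}^{\rho})=\mathcal U(\mathcal G_{G}^{\rho})$ is the identity, the comparison map becomes the tautological inclusion
\[H^{\ast}_{r}(\mathcal U(\mathcal G_{G}^{\rho}),V_{k})^{\mathcal G_{G}^{\rho}/\mathcal U(\mathcal G_{G}^{\rho})(k)}\subset H^{\ast}_{r}(\mathcal U(\mathcal G_{\Gamma}^{\rho}),V_{k})^{\mathcal G_{\Gamma}^{\rho}/\mathcal U(\mathcal G_{\Gamma}^{\rho})(k)},\]
which is obviously injective; this gives the corollary, and via $H^{\ast}_{c}(G,V_{k})\cong H^{\ast}(\g,V_{k})$ it recovers the classical injectivity of $H^{\ast}(\g,V_{k})\to H^{\ast}(\Gamma\backslash G,E)$ from Section \ref{MO}.

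The only real work is the naturality bookkeeping: one must verify that the isomorphisms supplied by Theorems \ref{poly1}, \ref{lie1} and \ref{unirediso} are compatible with the morphisms of the commutative diagram of Section \ref{sefull}, so that the composite identification is genuinely the inclusion of invariant subspaces rather than merely an abstract isomorphism of the two cohomology spaces. For the comparison maps this is immediate from functoriality of group and continuous cohomology; for Theorem \ref{unirediso} it follows from the naturality of the spectral sequence of an algebraic group extension (\cite[Proposition 6.6]{Jan}) applied to the morphism of extensions $1\to \mathcal U(\mathcal G_{\Gamma}^{\rho})\to \mathcal G_{\Gamma}^{\rho}\to \mathcal G_{\Gamma}^{\rho}/\mathcal U(\mathcal G_{\Gamma}^{\rho})\to 1$ into $1\to \mathcal U(\mathcal G_{G}^{\rho})\to \mathcal G_{G}^{\rho}\to \mathcal G_{G}^{\rho}/\mathcal U(\mathcal G_{G}^{\rho})\to 1$, whose left vertical arrow is the identity. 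I expect this compatibility check to be the main (and essentially the only) obstacle; once it is in place the injectivity is immediate.
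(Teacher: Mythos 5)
Your proposal is correct and follows essentially the same route as the paper: apply Theorems \ref{poly1} and \ref{lie1} to the full representations into the relative algebraic hulls, rewrite both sides via Theorem \ref{unirediso}, and use $\mathcal U(\mathcal G_{\Gamma}^{\rho})=\mathcal U(\mathcal G_{G}^{\rho})$ to identify the comparison map with the inclusion of $\mathcal G_{G}^{\rho}/\mathcal U(\mathcal G_{G}^{\rho})(k)$-invariants into $\mathcal G_{\Gamma}^{\rho}/\mathcal U(\mathcal G_{\Gamma}^{\rho})(k)$-invariants. The naturality bookkeeping you flag is left implicit in the paper, but your treatment of it is the intended one.
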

Moreover, as the following result,
by Proposition \ref{mok},  our theorems imply the Mostow theorem.
\begin{corollary}
If  $\mathcal G_{\Gamma}^{\rho}=\mathcal G_{G}^{\rho}$ (equivalently $\rho$ is $\Gamma$-admissible),
then we have 
\[H^{\ast}_{r}(\mathcal U(\mathcal G_{G}^{\rho}),V_{k})^{\mathcal G_{G}^{\rho}/\mathcal U(\mathcal G_{G}^{\rho})(k)}= H^{\ast}_{r}(\mathcal U(\mathcal G_{\Gamma}^{\rho}),V_{k})^{\mathcal G_{\Gamma}^{\rho}/\mathcal U(\mathcal G_{\Gamma}^{\rho})(k)}.
\]
and hence $\Gamma\to G$ induces an isomorphism
\[H^{\ast}_{c}(G,V_{k})\cong H^{\ast}(\Gamma,V_{k}).
\]

\end{corollary}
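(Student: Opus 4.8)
The plan is to read off the statement from Proposition \ref{mok} together with the chain of isomorphisms already recorded in the paragraph preceding it, so that essentially no new work is required. First I would invoke Proposition \ref{mok} to replace the hypothesis $\mathcal G_{\Gamma}^{\rho}=\mathcal G_{G}^{\rho}$ by the equivalent condition that $\rho$ is $\Gamma$-admissible; the two are interchangeable, so the parenthetical clause needs nothing further.

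Next I would observe that the equality $\mathcal G_{\Gamma}^{\rho}=\mathcal G_{G}^{\rho}$ is genuinely stronger than the equality $\mathcal U(\mathcal G_{\Gamma}^{\rho})=\mathcal U(\mathcal G_{G}^{\rho})$ that always holds by Proposition \ref{relproa}: it also forces the reductive quotients $\mathcal G_{\Gamma}^{\rho}/\mathcal U(\mathcal G_{\Gamma}^{\rho})$ and $\mathcal G_{G}^{\rho}/\mathcal U(\mathcal G_{G}^{\rho})$, and their groups of $k$-points, to coincide. Hence
\[H^{\ast}_{r}(\mathcal U(\mathcal G_{G}^{\rho}),V_{k})^{\mathcal G_{G}^{\rho}/\mathcal U(\mathcal G_{G}^{\rho})(k)}\quad\text{and}\quad H^{\ast}_{r}(\mathcal U(\mathcal G_{\Gamma}^{\rho}),V_{k})^{\mathcal G_{\Gamma}^{\rho}/\mathcal U(\mathcal G_{\Gamma}^{\rho})(k)}\]
are the invariants of one and the same module under one and the same group, so they are literally equal; this is the first displayed assertion.

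For the isomorphism $H^{\ast}_{c}(G,V_{k})\cong H^{\ast}(\Gamma,V_{k})$ I would recall, from the discussion preceding the corollary, that Theorem \ref{poly1}, Theorem \ref{lie1} and Theorem \ref{hocun} give the identifications $H^{\ast}(\Gamma,V_{k})\cong H^{\ast}_{r}(\mathcal U(\mathcal G_{\Gamma}^{\rho}),V_{k})^{\mathcal G_{\Gamma}^{\rho}/\mathcal U(\mathcal G_{\Gamma}^{\rho})(k)}$ and $H^{\ast}_{c}(G,V_{k})\cong H^{\ast}_{r}(\mathcal U(\mathcal G_{G}^{\rho}),V_{k})^{\mathcal G_{G}^{\rho}/\mathcal U(\mathcal G_{G}^{\rho})(k)}$, under which the map induced by $\Gamma\hookrightarrow G$ becomes the inclusion of the former subspace into the latter. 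By the previous step that inclusion is the identity of a single space, so the induced map $H^{\ast}_{c}(G,V_{k})\to H^{\ast}(\Gamma,V_{k})$ is an isomorphism, which is the Mostow theorem (Theorem \ref{Moss}).

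The one point that needs care — and which has essentially been dealt with already in the paragraph before the statement — is functoriality: one must check that the composite isomorphism of Corollary \ref{pppis} is natural with respect to the commutative square relating $\Gamma\hookrightarrow G$ and $\mathcal G_{\Gamma}^{\rho}\hookrightarrow\mathcal G_{G}^{\rho}$, so that the restriction map is genuinely the inclusion of invariants and not merely some abstract map between isomorphic spaces. This follows from the naturality of the isomorphism in Theorem \ref{hocun} and of the comparison maps in Theorem \ref{poly1} and Theorem \ref{lie1}. Beyond this bookkeeping I do not expect any obstacle.
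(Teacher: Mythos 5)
Your proposal is correct and follows the same route as the paper: the paper records this corollary as an immediate consequence of the identifications displayed just before it (the induced map $H^{\ast}_{c}(G,V_{k})\to H^{\ast}(\Gamma,V_{k})$ is identified with the inclusion of the $\mathcal G_{G}^{\rho}/\mathcal U(\mathcal G_{G}^{\rho})(k)$-invariants into the $\mathcal G_{\Gamma}^{\rho}/\mathcal U(\mathcal G_{\Gamma}^{\rho})(k)$-invariants of the same cohomology, which collapses to the identity once $\mathcal G_{\Gamma}^{\rho}=\mathcal G_{G}^{\rho}$), together with Proposition \ref{mok} for the parenthetical equivalence. The only cosmetic slip is that the invariant-subspace description used here comes from Theorem \ref{unirediso} (rational cohomology of the unipotent radical) rather than Theorem \ref{hocun}, but the two are interchangeable for this purpose.
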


\subsection{Nilpotent Case}
Let $\Gamma$ be a torsion-free nilpotent group.
Then we have (see \cite[Chapter II]{R} and \cite[Chapter 2]{OV}):
\begin{itemize}
\item $\Gamma$ is a lattice of some simply connected nilpotent Lie group $N$.
\item $N$ can be the group $\mathcal U(\R)$ of real points of a $\Q$-defined algebraic unipotent group $\mathcal U$.
\item $\Gamma\subset \mathcal U (\Q)$ and $\Gamma$ is Zariski-dense in $\mathcal U$.
\end{itemize}
 We denote by $\frak u_{k}$ the $k$-Lie algebra of $\mathcal U$.
Then $\frak u_{\R}$ is the Lie algebra of $N$.
By our results, we have:
\begin{corollary}
For a $k$-rational $\mathcal U$-module $V$, the inclusion  $\Gamma\subset \mathcal U (\Q)$ induces  cohomology isomorphisms
\[H^{\ast}(\frak u_{k},V_{k})\cong H^{\ast}(\mathcal U, V_{k})\cong H^{\ast}(\Gamma, V_{k}).
\]
\end{corollary}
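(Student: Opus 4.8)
The plan is to obtain this as a direct consequence of Theorem \ref{poly1} and Theorem \ref{hocun}, exploiting the fact that a unipotent algebraic group is its own unipotent radical, so that the ``semisimple invariants'' appearing in the general statements disappear.

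First I would record that $\Gamma$, being torsion-free nilpotent, is in particular torsion-free virtually polycyclic, and that the three listed facts provide an inclusion $\Gamma\subset {\mathcal U}(\Q)$ with Zariski-dense image. The point to check is that this inclusion is a \emph{full} representation. Since $\mathcal U$ is unipotent we have ${\mathcal U}({\mathcal U})={\mathcal U}$, hence $\dim {\mathcal U}({\mathcal U})=\dim {\mathcal U}=\dim N$, where $N$ is the simply connected nilpotent Lie group having $\Gamma$ as a lattice. On the other hand, because $\Gamma$ is a lattice in $N$ we have ${\rm rank}\,\Gamma=\dim N$ (see \cite[Chapter II]{R}, \cite[Chapter 2]{OV}), so $\dim {\mathcal U}({\mathcal U})={\rm rank}\,\Gamma$, which is exactly fullness. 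Finally $\mathcal U$ is $\Q$-defined, hence $k$-defined for the given subfield $k\subset\C$, and $\Gamma\subset {\mathcal U}(\Q)\subset{\mathcal U}(k)$, so the hypotheses of Theorem \ref{poly1} are met.

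Next, applying Theorem \ref{poly1} to this full representation and to the $k$-rational $\mathcal U$-module $V$ gives the isomorphism $H^{\ast}_{r}({\mathcal U},V_{k})\cong H^{\ast}(\Gamma,V_{k})$, induced by the inclusion. For the other isomorphism I would invoke Theorem \ref{hocun} with $\mathcal G={\mathcal U}$: its unipotent radical is all of $\mathcal U$, so the quotient ${\mathcal U}/{\mathcal U}({\mathcal U})$ is the trivial group and the invariants are taken over $\{e\}$, yielding $H^{\ast}_{r}({\mathcal U},V_{k})\cong H^{\ast}(\frak u_{k},V_{k})^{\{e\}}=H^{\ast}(\frak u_{k},V_{k})$. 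Concatenating the two isomorphisms gives the statement; both are induced by $\Gamma\subset{\mathcal U}(\Q)$, since the map of Theorem \ref{poly1} is the inclusion-induced one and the isomorphism of Theorem \ref{hocun} is natural.

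There is essentially no obstacle here: all the substance is packaged inside Theorems \ref{poly1} and \ref{hocun}, and the only step needing a moment's attention is the verification of fullness, i.e. the identification ${\rm rank}\,\Gamma=\dim{\mathcal U}$, which is precisely where the lattice hypothesis (equivalently, that $N$ is the Malcev completion of $\Gamma$) is used. It is worth noting that, specialized to $V_{k}=\R$ with trivial action, this corollary recovers Nomizu's classical isomorphism $H^{\ast}(\frak u_{\R},\R)\cong H^{\ast}(\Gamma,\R)$, and more generally its versions with coefficients in a module.
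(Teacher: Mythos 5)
Your proposal is correct and follows exactly the route the paper intends: the paper gives no explicit proof beyond ``By our results,'' and the intended argument is precisely to check fullness of $\Gamma\subset\mathcal U(\Q)$ via ${\rm rank}\,\Gamma=\dim N=\dim\mathcal U(\mathcal U)$, apply Theorem \ref{poly1}, and specialize Theorem \ref{hocun} to the case where the unipotent radical is the whole group. Your verification of fullness using the lattice criterion is the right (and only) point needing attention, and you handle it correctly.
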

\begin{remark}
This corollary is an algebraic analogy with the results given by Nomizu \cite{Nom} and  Lambe-Priddy \cite{LP}.
An isomorphism
\[H^{\ast}(\frak u_{\R}, \R)\cong  H^{\ast}(\Gamma, \R)
\]
was given by the de Rham cohomology of the nilmanifold $\Gamma\backslash N $ in \cite{Nom}.
Moreover  an
 isomorphism
\[H^{\ast}(\frak u_{\Q}, \Q)\cong  H^{\ast}(\Gamma, \Q)
\]
was given by the simplicial rational de Rham cohomology of the simplicial classifying complex  of $\Gamma$ in \cite{LP}.
\end{remark}

\section{New proof of the Dekimpe-Igodt surprising cohomology vanishing theorem}\label{DDEKimp}

Let $\Gamma$ (resp. $G$) 
be a torsion-free virtually polycyclic group (resp. simply connected solvable Lie group) and
$\rho :\Gamma\to GL(V_{k})$ (resp. $\rho :G\to GL(V_{k})$)
 a finite-dimensional representation.
Consider the $\rho$-relative algebraic hull $\mathcal G_{\Gamma}^{\rho}$ (resp $\mathcal G_{G}^{\rho}$).
Take a splitting $\mathcal G_{\Gamma}^{\rho}=\mathcal G_{\Gamma}^{\rho}/\mathcal U(\mathcal G_{\Gamma}^{\rho})\ltimes \mathcal U(\mathcal G_{\Gamma}^{\rho})$  (resp. $\mathcal G_{G}^{\rho}=\mathcal G_{G}^{\rho}/\mathcal U(\mathcal G_{G}^{\rho})\ltimes \mathcal U(\mathcal G_{G}^{\rho})$).
We simply write $\mathcal T=\mathcal G_{\Gamma}^{\rho}/\mathcal U(\mathcal G_{\Gamma}^{\rho})$ (resp. $\mathcal G_{G}^{\rho}/\mathcal U(\mathcal G_{G}^{\rho})$)
 and $\mathcal U=\mathcal U(\mathcal G_{\Gamma}^{\rho})$. (resp. $\mathcal U(\mathcal G_{G}^{\rho})$).
Then, considering $W_{k}=k[\mathcal U]\otimes V_{k}$ in Corollary \ref{pppis}, by Corollary \ref{VVVk},
we have the following result.
\begin{corollary}\label{VANNN}
For $\ast>0 $,  we have 
\[H^{\ast}(\Gamma, k[{\mathcal U}]\otimes V_{k})=0
\]
(resp. 
\[H^{\ast}(G, k[{\mathcal U}]\otimes V_{k})=0.)
\]
\end{corollary}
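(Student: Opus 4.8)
The plan is to deduce Corollary \ref{VANNN} as a direct combination of the main computational corollary (Corollary \ref{pppis}) and the vanishing result for coefficients in the coordinate ring of the unipotent radical (Corollary \ref{VVVk}). The starting point is the observation that $W_{k}=k[\mathcal U]\otimes V_{k}$ carries a natural $k$-rational $\mathcal G_{\Gamma}^{\rho}$-module structure (resp. $\mathcal G_{G}^{\rho}$-module structure): the coordinate ring $k[\mathcal U]$ of the unipotent radical is a $k$-rational $\mathcal G_{\Gamma}^{\rho}$-module via the affine action $\mathcal G_{\Gamma}^{\rho}\to {\rm Aut}(\mathcal U)\ltimes \mathcal U$ coming from the splitting $\mathcal G_{\Gamma}^{\rho}=\mathcal T\ltimes\mathcal U$, and $V_{k}$ is a $k$-rational $\mathcal G_{\Gamma}^{\rho}$-module by the first assertion of Proposition \ref{relproa}; hence so is the tensor product. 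Thus $W_{k}$ is an admissible choice of coefficient module in Corollary \ref{pppis}.

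Next I would apply Corollary \ref{pppis} with this $W$. It gives an isomorphism
\[
H^{\ast}(\Gamma,W_{k})\cong H^{\ast}_{r}(\mathcal G_{\Gamma}^{\rho},W_{k})
\]
(resp. $H^{\ast}_{c}(G,W_{k})\cong H^{\ast}_{r}(\mathcal G_{G}^{\rho},W_{k})$). Now invoke Corollary \ref{VVVk} applied to the $k$-defined algebraic group $\mathcal G=\mathcal G_{\Gamma}^{\rho}$ (resp. $\mathcal G_{G}^{\rho}$), whose unipotent radical is precisely $\mathcal U=\mathcal U(\mathcal G_{\Gamma}^{\rho})$ (resp. $\mathcal U(\mathcal G_{G}^{\rho})$): it asserts $H^{\ast}_{r}(\mathcal G_{\Gamma}^{\rho},k[\mathcal U]\otimes V_{k})=0$ for $\ast>0$. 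Chaining the two yields $H^{\ast}(\Gamma,k[\mathcal U]\otimes V_{k})=0$ for $\ast>0$, which is exactly the claim; the $G$-case is identical with $H^{\ast}_{c}$ in place of $H^{\ast}$.

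The only real point requiring care is the compatibility of module structures: one must check that the $\mathcal G_{\Gamma}^{\rho}$-module $k[\mathcal U]\otimes V_{k}$ used when citing Corollary \ref{VVVk} is the same as the one to which Corollary \ref{pppis} is applied, i.e. that the action of $\mathcal U$ on $k[\mathcal U]$ coming from the splitting agrees with the translation action used in the proof of Corollary \ref{VVVk} (which in turn relied on \cite[Part I. 4.7]{Jan}). This is automatic once one fixes the splitting $\mathcal G_{\Gamma}^{\rho}=\mathcal T\ltimes\mathcal U$ and uses it consistently, since the affine action $\mathcal G_{\Gamma}^{\rho}\to{\rm Aut}(\mathcal U)\ltimes\mathcal U$ restricts on $\mathcal U$ to left translation. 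So I expect the main (minor) obstacle to be purely bookkeeping: making explicit that the coefficient module in both cited corollaries is literally $k[\mathcal U]\otimes V_{k}$ with its standard $\mathcal G_{\Gamma}^{\rho}$-structure, after which the statement is immediate and no further computation is needed.
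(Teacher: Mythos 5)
Your proposal is correct and follows exactly the paper's own argument: the paper likewise obtains Corollary \ref{VANNN} by taking $W_{k}=k[\mathcal U]\otimes V_{k}$ in Corollary \ref{pppis} and then invoking the vanishing of Corollary \ref{VVVk} for the relative algebraic hull, whose unipotent radical is $\mathcal U$. Your extra remarks on the compatibility of the $\mathcal G_{\Gamma}^{\rho}$-module structure on $k[\mathcal U]\otimes V_{k}$ are sound bookkeeping that the paper leaves implicit.
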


Since the exponential map $\exp: \frak u\to \mathcal U$ is an isomorphism of $k$-defined algebraic variety,
as we regard $\frak u_{k}=k^{r}$ with $r={\rm rank}\, \Gamma$ (resp. $\dim G$),
the coordinate ring $k[{\mathcal U}]$ can be regarded as the space $P(k^{r})$ of the $k$-polynomial functions on $\frak u_{k}=k^{r}$.

We assume $k=\R$.
A group action on $\R^{r}$ is called bounded polynomial  if for some integer $d$ the action is represented by $d$-bounded degree polynomial diffeomorphisms of  $\R^{r}$  with  $d$-bounded degree polynomial inverse.
For the $\R$-defined  algebraic group $\mathcal  G_{\Gamma}^{\rho}=\mathcal T\ltimes \mathcal U$,
considering $\frak u_{\R}=\R^{r}$,
we have the algebraic group action of $\mathcal G(\R)$ on $\R^{r}$.
By the fundamental theory of algebraic groups, this action is bounded polynomial.
Hence the $\Gamma$-action  on $\frak u_{\R}=\R^{r}$ is also bounded polynomial.
Take $\mathcal S=\mathcal Z(\mathcal U)\cap \mathcal T$ where $\mathcal Z(\mathcal U)$ is the centralizer of $\mathcal U$.
Then $\mathcal G(\R)/\mathcal S(\R)=(\mathcal T(\R)/\mathcal S(\R))\ltimes \mathcal U(\R)$ is the $\R$-points of  algebraic hull of $\Gamma$ 
with the Zariski-dense inclusion $\Gamma\to (\mathcal T(\R)/\mathcal S(\R))\ltimes \mathcal U(\R)$
(see \cite[Appendix A]{B}).
Hence the $\Gamma$-action on  $ \mathcal U(\R)$ is the action for the construction of Baues' standard $\Gamma$-manifold as Section \ref{BAUSS}.
This implies that  the $\Gamma$-action on $\frak u_{\R}=\R^{r}$ is bounded polynomial and crystallographic (i.e. properly discontinuous and cocompact).

In \cite{BD}, Benoist and Dekimpe showed that a bounded polynomial crystallographic  action  of  $\Gamma$ is unique up to conjugation by a bounded polynomial diffeomorphism.

Hence Corollary \ref{VANNN} says the following statement.
\begin{theorem}\label{vanppp}
Let $\Gamma$ be  a torsion-free virtually polycyclic group.
Suppose  $\Gamma$  admits a bounded polynomial crystallographic  action   on $\R^{r}$.
Consider the vector space $P(\R^{r})$ of the polynomial functions on $\R^{r}$ as a $\Gamma$-module.
Then for any   representation $\rho:\Gamma\to GL(V_{\R})$  of a finite-dimensional $\R$-vector space $V$, for $\ast> 0$ we have
\[H^{\ast}(\Gamma, P(\R^{r})\otimes V_{\R})=0.\]
\end{theorem}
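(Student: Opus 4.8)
The plan is to deduce the theorem from Corollary \ref{VANNN} together with the Benoist--Dekimpe rigidity theorem, the point being that the abstract action in the hypothesis must agree, up to a controlled change of coordinates, with the action coming from the algebraic hull. First I would pin down $r$: since $\Gamma$ is torsion-free, a properly discontinuous cocompact action of $\Gamma$ on $\R^{r}$ is automatically free, so $\Gamma\backslash\R^{r}$ is a closed aspherical manifold with $\pi_{1}=\Gamma$; comparing with the standard $\Gamma$-manifold $M_{\Gamma}$ of Section \ref{BAUSS} (both dimensions equal the cohomological dimension of $\Gamma$) forces $r={\rm rank}\,\Gamma$. Call the given action $\alpha$.

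Next, fix a finite-dimensional representation $\rho\colon\Gamma\to GL(V_{\R})$, form the $\rho$-relative algebraic hull $\mathcal G_{\Gamma}^{\rho}=\mathcal T\ltimes\mathcal U$, and use the exponential map to identify $\frak u_{\R}$ with $\R^{r}$. As recorded in the discussion preceding the statement, the resulting $\Gamma$-action $\beta$ on $\R^{r}$ is bounded polynomial and crystallographic, and the associated $\Gamma$-module structure on the coordinate ring $\R[\mathcal U]$ is exactly the natural one on the polynomial algebra $P(\R^{r})$. I would then invoke the Benoist--Dekimpe uniqueness theorem to get a bounded polynomial diffeomorphism $\phi\colon\R^{r}\to\R^{r}$ conjugating $\alpha$ to $\beta$, i.e. $\phi\circ\alpha(\gamma)=\beta(\gamma)\circ\phi$ for every $\gamma\in\Gamma$.

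Since $\phi$ and $\phi^{-1}$ are polynomial maps, the pullback $\phi^{\ast}\colon P(\R^{r})\to P(\R^{r})$, $f\mapsto f\circ\phi$, is a linear isomorphism, and the intertwining relation shows it is an isomorphism of $\Gamma$-modules from $(P(\R^{r}),\beta)$ to $(P(\R^{r}),\alpha)$. Tensoring with $V_{\R}$, carrying the $\rho$-action and the diagonal $\Gamma$-action, produces an isomorphism of $\Gamma$-modules between $P(\R^{r})\otimes V_{\R}$ with the given structure and $\R[\mathcal U]\otimes V_{\R}$. Therefore, for $\ast>0$,
\[H^{\ast}(\Gamma,P(\R^{r})\otimes V_{\R})\cong H^{\ast}(\Gamma,\R[\mathcal U]\otimes V_{\R})=0\]
by Corollary \ref{VANNN}, which is the assertion.

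The genuine content lies entirely upstream: Corollary \ref{VANNN} (which rests on the spectral sequence for the unipotent radical and the acyclicity of the coordinate ring, Corollary \ref{VVVk}) and the identification, made just before the statement, of the algebraic-hull action on $\frak u_{\R}$ as a bounded polynomial crystallographic action, so that Benoist--Dekimpe applies. Given these, the argument above is bookkeeping; the one mild subtlety to check is that the conjugacy must be by a \emph{bounded polynomial} diffeomorphism rather than an arbitrary one, since this is precisely what guarantees that $\phi^{\ast}$ preserves $P(\R^{r})$, and it is why the boundedness hypothesis in the theorem cannot be dropped.
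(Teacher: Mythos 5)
Your proposal is correct and follows essentially the same route as the paper: identify $\R[\mathcal U]$ with $P(\R^{r})$ via the exponential map, observe that the algebraic-hull action on $\frak u_{\R}=\R^{r}$ is bounded polynomial and crystallographic, invoke the Benoist--Dekimpe uniqueness theorem to transport the given action to that one, and conclude from Corollary \ref{VANNN}. You have merely made explicit two points the paper leaves implicit (that $r={\rm rank}\,\Gamma$ and that the conjugating bounded polynomial diffeomorphism induces a $\Gamma$-module isomorphism on polynomial functions), which is a faithful filling-in rather than a different argument.
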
 

\begin{remark}
\begin{enumerate}
\item
If a virtually polycyclic group $\Gamma$ is not torsion-free, then we can not construct the standard $\Gamma$-manifold.
But, for every virtually polycyclic group $\Gamma$, there exists a finite index normal torsion-free subgroup $\Gamma^{\prime}$ in $\Gamma$.
Hence, assuming that $\Gamma$  admits a bounded polynomial crystallographic  action   on $\R^{r}$,
 for $\ast>0$, by the vanishing $H^{\ast}(\Gamma^{\prime},P(\R^{r})\otimes V_{\R})=0$ for torsion-free case,
we can easily see
\[H^{\ast}(\Gamma,P(\R^{r})\otimes V_{\R})=0.
\]
(see \cite[Chapter III 10]{Bro})
\item
Denote by $P(\R^{r}, V_{\R})$ the vector  space of polynomial maps from $\R^{r}$ to $V_{\R}$.
It is known that  the map $P(\R^{r})\otimes V_{\R}\to P(\R^{r}, V_{\R})$
such that $f\otimes v\mapsto (\R^{r}\ni a\mapsto f(a)v)$ is an isomorphism (cf. \cite[page 16]{Hocin}).
Denote by ${\mathcal P}(\R^{r})$ the group of polynomial diffeomorphisms of $\R^{r}$.
Since  the map  $P(\R^{r})\otimes V_{\R}\to P(\R^{r}, V_{\R})$ is ${\mathcal P}(\R^{r}) \times GL(V_{\R})$-equivariant,
for a polynomial $\Gamma$-action on $\R^{r}$, we can identify the $\Gamma$-module $P(\R^{r})\otimes V_{\R}$ with the $\Gamma$-module $P(\R^{r}, V_{\R})$.

\item
In \cite{Deki},  for the special bounded polynomial action ( called "canonical type" ) of a  virtually polycyclic group $\Gamma$,  Dekimpe and Igodt showed the cohomology vanishing 
\[H^{\ast}(\Gamma,P(\R^{r}, V_{\R}))=0\] for $\ast>0$.
They said such cohomology vanishing is surprising.
The proof which is given by Dekimpe and Igodt is very hard (see \cite[Section 3]{Deki}).
Now we obtain a new proof of such vanishing theorem.
\end{enumerate}
\end{remark}

In this paper we give a new application of the vanishing theorem.
Take the standard $\Gamma$-manifold  $\Gamma\backslash \mathcal U(\R)$.
We consider  the space
\[\left(\bigwedge {\frak u_{\R}}^{\ast}\otimes V_{\R}\right)^{\mathcal T(\R)}\]  of $\mathcal T(\R)$-invariant elements of the cochain complex of the  $\R$-Lie algebra ${\frak u}_{\R}$ of $\mathcal U$  with values in the $\R$-rational $\mathcal U$-module $V$.
Then,  as Section \ref{BAUSS},  we have the natural inclusion
\[\left(\bigwedge {\frak u_{\R}}^{\ast}\otimes V_{\R}\right)^{\mathcal T(\R)}\subset A^{\ast}(M_{\Gamma}, E)
\]
where $A^{\ast}(M_{\Gamma}, E)$ is the de Rham complex with values in the flat bundle $E$ corresponding to $\rho :\Gamma\to GL(V_{\R})$.
We prove an extension of  Theorem \ref{a-t-4} for any finite-dimensional representation $\rho$ without using the geometry of an infra-solvmanifold. 
(On the other hand, we use polynomial geometry)
\begin{theorem}\label{BEXT}
The inclusion 
\[\left(\bigwedge {\frak u_{\R}}^{\ast}\otimes V_{\R}\right)^{\mathcal T(\R)}\subset A^{\ast}(M_{\Gamma}, E)
\] 
induces a cohomology isomorphism
\[H^{\ast}\left( {\frak u_{\R}}, V_{\R}\right)^{\mathcal T(\R)}\cong H^{\ast}(M_{\Gamma},E)
.\]
\end{theorem}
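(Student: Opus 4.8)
The plan is to interpolate between the two complexes in the statement by the subcomplex $A^{\ast}_{P}(M_{\Gamma},E)$ of $\Gamma$-invariant \emph{polynomial} $E$-valued forms on $\mathcal U(\R)=\R^{r}$, and to show that both resulting inclusions are quasi-isomorphisms. First I would record two reductions. Since $M_{\Gamma}=\Gamma\backslash\R^{r}$ is aspherical with $\pi_{1}(M_{\Gamma})=\Gamma$, one has $H^{\ast}(M_{\Gamma},E)\cong H^{\ast}(\Gamma,V_{\R})$; and since $\mathcal T$ is linearly reductive, the functor of $\mathcal T(\R)$-invariants is exact, so the complex $\bigl(\bigwedge{\frak u_{\R}}^{\ast}\otimes V_{\R}\bigr)^{\mathcal T(\R)}$ with the Chevalley--Eilenberg differential has cohomology $H^{\ast}({\frak u_{\R}},V_{\R})^{\mathcal T(\R)}$ (which equals $H^{\ast}_{r}(\mathcal G_{\Gamma}^{\rho},V_{\R})\cong H^{\ast}(\Gamma,V_{\R})$ by Theorem \ref{hocun} and Theorem \ref{poly1}). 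Hence it suffices to prove that $\iota$ is a quasi-isomorphism.

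Identifying $\mathcal U(\R)=\R^{r}$ via the exponential map, the left-invariant $1$-forms on $\mathcal U(\R)$ have polynomial coefficients and frame the cotangent bundle, so $A^{\ast}_{P}(\R^{r})=\R[\mathcal U]\otimes\bigwedge{\frak u_{\R}}^{\ast}$; this is an isomorphism of rational $\mathcal G_{\Gamma}^{\rho}$-modules, where $\bigwedge{\frak u_{\R}}^{\ast}$ carries the trivial $\mathcal U(\mathcal G_{\Gamma}^{\rho})$-action and the coadjoint $\mathcal T$-action and $\R[\mathcal U]$ carries the affine action. Tensoring with the rational $\mathcal G_{\Gamma}^{\rho}$-module $V$ gives $A^{p}_{P}(\R^{r})\otimes V_{\R}\cong\R[\mathcal U]\otimes\bigwedge^{p}{\frak u_{\R}}^{\ast}\otimes V_{\R}$ as rational $\mathcal G_{\Gamma}^{\rho}$-modules, and the exterior derivative restricts on left-invariant forms to the Chevalley--Eilenberg differential. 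Since left-invariant forms are $\mathcal G_{\Gamma}^{\rho}(\R)$-invariant, in particular $\Gamma$-invariant, $\iota$ factors as
\[\bigl(\textstyle\bigwedge{\frak u_{\R}}^{\ast}\otimes V_{\R}\bigr)^{\mathcal T(\R)}\ \overset{\iota_{1}}{\hookrightarrow}\ A^{\ast}_{P}(M_{\Gamma},E):=\bigl(A^{\ast}_{P}(\R^{r})\otimes V_{\R}\bigr)^{\Gamma}\ \overset{\iota_{2}}{\hookrightarrow}\ A^{\ast}(M_{\Gamma},E).\]
Here $\iota_{1}$ is an isomorphism of complexes: because $\Gamma$ is Zariski-dense in $\mathcal G_{\Gamma}^{\rho}$ and $\R[\mathcal U]\otimes\bigwedge^{p}{\frak u_{\R}}^{\ast}\otimes V_{\R}$ is rational, its $\Gamma$-invariants coincide with its $\mathcal G_{\Gamma}^{\rho}(\R)$-invariants, and by the degree-zero part of Corollary \ref{VVVk} (i.e.\ \cite[Part I.\ 4.7]{Jan}) these equal $\bigl(\bigwedge^{p}{\frak u_{\R}}^{\ast}\otimes V_{\R}\bigr)^{\mathcal T(\R)}$; thus $A^{p}_{P}(M_{\Gamma},E)$ is finite-dimensional with $\dim A^{p}_{P}(M_{\Gamma},E)=\dim\bigl(\bigwedge^{p}{\frak u_{\R}}^{\ast}\otimes V_{\R}\bigr)^{\mathcal T(\R)}$ for all $p$, so the injective cochain map $\iota_{1}$ is bijective in each degree, hence an isomorphism of complexes.

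For $\iota_{2}$ I would apply the comparison theorem for acyclic resolutions. Both $A^{\ast}_{P}(\R^{r})\otimes V_{\R}$ and $A^{\ast}(\R^{r})\otimes V_{\R}$ are resolutions of the $\Gamma$-module $V_{\R}$: exactness in positive degrees by the polynomial, respectively smooth, Poincar\'e lemma, and a $\Gamma$-equivariant augmentation $V_{\R}\hookrightarrow(\text{constant }V\text{-valued functions})$ in degree $0$. The terms of the smooth complex are $\Gamma$-acyclic by the usual softness/partition-of-unity argument; the terms of the polynomial complex are of the form $\R[\mathcal U]\otimes W_{p}$ with $W_{p}=\bigwedge^{p}{\frak u_{\R}}^{\ast}\otimes V_{\R}$ a rational $\mathcal G_{\Gamma}^{\rho}$-module, hence satisfy $H^{>0}(\Gamma,\R[\mathcal U]\otimes W_{p})=0$ by Corollary \ref{VANNN} (equivalently Corollary \ref{VVVk} together with Theorem \ref{poly1}). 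The inclusion of the polynomial into the smooth complex is therefore a morphism of $\Gamma$-acyclic resolutions of $V_{\R}$ lying over $\mathrm{id}_{V_{\R}}$, so it induces on cohomology of $\Gamma$-invariants the canonical isomorphism $H^{\ast}(\Gamma,V_{\R})\xrightarrow{\ \sim\ }H^{\ast}(\Gamma,V_{\R})$; that is, $\iota_{2}$ is a quasi-isomorphism. Combining, $\iota=\iota_{2}\circ\iota_{1}$ induces $H^{\ast}({\frak u_{\R}},V_{\R})^{\mathcal T(\R)}\cong H^{\ast}(M_{\Gamma},E)$.

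The main obstacle is exactly the $\Gamma$-acyclicity of the polynomial de Rham terms: this is where "polynomial geometry" enters, and it is nothing but the Dekimpe--Igodt vanishing theorem (Corollary \ref{VANNN}) applied to the modules $\R[\mathcal U]\otimes W_{p}$. The remaining points — that $A^{\ast}_{P}(\R^{r})\cong\R[\mathcal U]\otimes\bigwedge{\frak u_{\R}}^{\ast}$ as rational $\mathcal G_{\Gamma}^{\rho}$-modules, that the exterior derivative restricts to the Chevalley--Eilenberg differential on left-invariant forms, that the augmentations are $\Gamma$-equivariant, and the standard polynomial Poincar\'e lemma — are routine but should be checked with care.
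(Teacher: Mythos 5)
Your proposal is correct and takes essentially the same route as the paper: interpolate through the complex of $\Gamma$-invariant polynomial forms, identify it with $\left(\bigwedge {\frak u_{\R}}^{\ast}\otimes V_{\R}\right)^{\mathcal T(\R)}$ via Zariski density and the degree-zero part of \cite[Part I. 4.7]{Jan}, and then compare with smooth forms using the polynomial Poincar\'e lemma and the vanishing of Corollary \ref{VANNN}. The only difference is one of packaging: the paper carries out the final comparison as a map of acyclic resolutions of the locally constant sheaf $\mathcal E$ on $M_{\Gamma}$, while you compare $\Gamma$-acyclic resolutions of the $\Gamma$-module $V_{\R}$ directly in group cohomology — equivalent arguments since $M_{\Gamma}$ is aspherical with fundamental group $\Gamma$.
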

\begin{proof}
Let  $A^{\ast}(\mathcal U(\R), V)$ be the cochain complex of $V$-valued $\mathcal C^{\infty}$-differential forms on $\mathcal U(\R)$ and $A_{poly}^{\ast}(\mathcal U(\R), V)\subset A^{\ast}(\mathcal U(\R), V)$ be the subcomplex of   $V$-valued $\R$-polynomial differential forms.
Then as a $\R$-rational $\mathcal  G_{\Gamma}^{\rho}$-module,
we have
\[A_{poly}^{\ast}(\mathcal U(\R), V_{\R})=\bigwedge {\frak u_{\R}}^{\ast}\otimes V_{\R}\otimes \R[{\mathcal U}].
\]
Since $\Gamma$ is Zariski-dense in  $\mathcal  G_{\Gamma}^{\rho}$,
we have
\[A_{poly}^{\ast}(\mathcal U(\R), V)^{\Gamma}=A_{poly}^{\ast}(\mathcal U(\R), V_{\R})^{\mathcal  G_{\Gamma}^{\rho}}=\left(\bigwedge {\frak u_{\R}}^{\ast}\otimes V_{\R}\right)^{\mathcal T(\R)}.\]
By Corollary \ref{VANNN}, for any $s$, 
we have
\[H^{\ast}(\Gamma, A_{poly}^{s}(\mathcal U(\R), V_{\R}))=0
\]
for $\ast>0$.
Since $\mathcal U(\R)=\R^{r}$ by the exponential map,
we have \[H^{\ast}(A_{poly}^{\ast}(\mathcal U(\R), V_{\R}))=0\] for $\ast>0$.

Let  $\mathcal A^{\ast}(M_{\Gamma},E_{\Gamma})$ be the complex of sheaves of differential forms with values in $E$.
Consider the subcomplex $\mathcal A_{poly}^{\ast}(M_{\Gamma},E)$ of  $\mathcal A^{\ast}(M_{\Gamma},E)$  such that for an open set $U\in M_{\Gamma}$, the section
$\mathcal A_{poly}^{\ast}(M_{\Gamma},E)(U)$ of $U$ consists of the forms 
$\omega \in \mathcal A^{\ast}(M_{\Gamma},E)(U)$ 
so that $\pi^{\ast}\omega$ is a polynomial forms on $\pi^{-1}(U)$
where $\pi:\mathcal U(\R)\to M_{\Gamma}$ is the quotient map.
Then,
 for each $s$, for  small $U$ we have $\mathcal A_{poly}^{s}(M_{\Gamma},E)(U)=A_{poly}^{s}(\mathcal U(\R), V_{\R})$ and 
 the sheaf $\mathcal A_{poly}^{s}(M_{\Gamma},E)$ is the locally constant sheaf corresponding to  the $\Gamma$-module $A_{poly}^{\ast}(\mathcal U(\R), V_{\R})$.
Hence by   the above argument, the complex
$\mathcal A_{poly}^{\ast}(M_{\Gamma},E)$
is an acyclic resolution of the locally constant sheaf $\mathcal E$ defined by $E$.
We have the diagram
\[\xymatrix{
\mathcal E\ar[r]\ar@{=}[d]  &\mathcal A_{poly}^{0}(M_{\Gamma},E)\ar[r]\ar[d]  &\mathcal A_{poly}^{1}(M_{\Gamma},E)\ar[r] \ar[d]  & \dots\\
\mathcal E\ar[r]  &\mathcal A^{0}(M_{\Gamma},E)\ar[r] &\mathcal A^{1}(M_{\Gamma},E)\ar[r]  & \dots
}.
\]
Let $A_{poly}^{\ast}(M_{\Gamma},E)$ be the global section of $\mathcal A_{poly}^{s}(M_{\Gamma},E)(M)$.
By the standard argument of sheaf cohomology and the de Rham theorem (see e.g. \cite[Section 4]{Voi}),
the inclusion
\[A_{poly}^{\ast}(M_{\Gamma},E)\subset A^{\ast}(M_{\Gamma}, E) 
\]
induces a cohomology isomorphism.
By
\[\left(\bigwedge {\frak u_{\R}}^{\ast}\otimes V_{\R}\right)^{\mathcal T(\R)}=A_{poly}^{\ast}(\mathcal U(\R), V_{\R})^{\Gamma}= A_{poly}^{\ast}(M_{\Gamma},E),
\]
 the theorem follows.
\end{proof}

By Corollary \ref{VANNN},  we can also give the continuous cohomology version of the cohomology vanishing on a bounded  polynomial simply transitive action of a simply connected solvable Lie group $G$.
For the $\R$-defined  algebraic group $\mathcal  G_{G}^{\rho}=\mathcal T\ltimes \mathcal U$ with a Zariski-dense inclusion $G\subset \mathcal T\ltimes \mathcal U$, 
 the restriction of the projection $p:\mathcal T\ltimes \mathcal U\to \mathcal U$ on $G$ is a diffeomorphism onto $\mathcal U(\R)$ and  by this diffeomorphism,  the restricted action of $G$ on $
\mathcal U(\R)$ can be identified with the action of left translation on $G$ (see \cite[Section 2]{B}).
Hence as the above argument, the action of $G$ on $\frak u_{\R}=\R^{r}$ is bounded polynomial and simply transitive.
In \cite{BD}, Benoist and Dekimpe showed that a bounded polynomial  simply transitive action  of  $G$ is unique up to conjugation by a bounded polynomial diffeomorphism.
Hence we have the following result.
\begin{theorem}
Let $G$ be a simply connected solvable Lie group.
Suppose  $G$ admits a bounded polynomial  simply transitive action    on $\R^{r}$.
Consider the vector space $P(\R^{r})$ of the polynomial functions on $\R^{r}$ as a continuous $G$-module.
Then, for any   representation $\rho:G\to GL(V_{\R})$  of a finite-dimensional $\R$-vector space $V$, for $\ast> 0$, we have
\[H^{\ast}_{c}(G, P(\R^{r})\otimes V_{\R})=0.\]

\end{theorem}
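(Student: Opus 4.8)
The plan is to reduce the statement, via the Benoist--Dekimpe uniqueness theorem, to the ``resp.'' case of Corollary \ref{VANNN}. The key point is to identify the continuous $G$-module $P(\R^{r})\otimes V_{\R}$ with the continuous $G$-module $\R[\mathcal U]\otimes V_{\R}$, where $\mathcal U=\mathcal U(\mathcal G_{G}^{\rho})$ is the unipotent radical of the $\rho$-relative algebraic hull of $G$ and $V$ is regarded as a rational $\mathcal G_{G}^{\rho}$-module by the first part of Proposition \ref{relproa}.

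First I would note that a simply transitive smooth action of $G$ on $\R^{r}$ forces $r=\dim G$, since the orbit map $G\to\R^{r}$ is then a diffeomorphism. By the fullness of $i_{G}\oplus\rho:G\to\mathcal G_{G}^{\rho}$ (third part of Proposition \ref{relproa}) we have $\dim\mathcal U=\dim G=r$, and since $\exp:\frak u_{\R}\to\mathcal U(\R)$ is an isomorphism of $\R$-defined varieties we may regard $\mathcal U(\R)=\frak u_{\R}=\R^{r}$ and $\R[\mathcal U]=P(\R^{r})$ as vector spaces. Taking the splitting $\mathcal G_{G}^{\rho}=\mathcal T\ltimes\mathcal U$, the resulting affine action $\mathcal G_{G}^{\rho}\to{\rm Aut}(\mathcal U)\ltimes\mathcal U$ restricts, as recalled just before the statement, to a bounded polynomial simply transitive action $\beta$ of $G$ on $\mathcal U(\R)=\R^{r}$; with respect to $\beta$, the induced $G$-action on $P(\R^{r})$ is by construction exactly the restriction to $G$ of the $\mathcal G_{G}^{\rho}$-module $\R[\mathcal U]$. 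Hence $P(\R^{r})\otimes V_{\R}$ equipped with the $\beta$-action and $\rho$ is the restriction to $G$ of the rational $\mathcal G_{G}^{\rho}$-module $\R[\mathcal U]\otimes V_{\R}$.

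Next I would invoke Benoist--Dekimpe \cite{BD}: the hypothesized bounded polynomial simply transitive action $\alpha$ of $G$ on $\R^{r}$ and the action $\beta$ above are conjugate by a bounded polynomial diffeomorphism $\phi:\R^{r}\to\R^{r}$, which by definition has a polynomial inverse. Pullback along $\phi$ therefore maps $P(\R^{r})$ isomorphically onto $P(\R^{r})$ and intertwines the two $G$-module structures; tensoring with $(V_{\R},\rho)$, the continuous $G$-module $P(\R^{r})\otimes V_{\R}$ built from $\alpha$ is isomorphic to the continuous $G$-module $\R[\mathcal U]\otimes V_{\R}$. Finally, the ``resp.'' part of Corollary \ref{VANNN}, applied with $W_{k}=\R[\mathcal U]\otimes V_{\R}$, gives $H^{\ast}_{c}(G,\R[\mathcal U]\otimes V_{\R})=0$ for $\ast>0$, and transporting this along the above $G$-module isomorphism yields $H^{\ast}_{c}(G,P(\R^{r})\otimes V_{\R})=0$ for $\ast>0$, as desired.

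The only substantial ingredient is the Benoist--Dekimpe uniqueness of bounded polynomial simply transitive actions; the rest is bookkeeping. The step I expect to need the most care in the write-up is the identification, as $G$-modules, of $(P(\R^{r}),\beta)$ with the restriction of the $\mathcal G_{G}^{\rho}$-module $\R[\mathcal U]$, since this threads together the splitting of $\mathcal G_{G}^{\rho}$, the exponential identification $\mathcal U(\R)=\R^{r}$, and the description of the restricted $G$-action; one must also record the routine but essential observation that it is precisely the boundedness and polynomiality of $\phi$ and $\phi^{-1}$ that make $\phi^{\ast}$ preserve $P(\R^{r})$ (a general diffeomorphism would not).
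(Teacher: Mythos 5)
Your proposal is correct and follows essentially the same route as the paper: the paper likewise deduces the theorem from Corollary \ref{VANNN} applied to $\R[\mathcal U]\otimes V_{\R}$, observes that the restriction of the algebraic action of $\mathcal G_{G}^{\rho}=\mathcal T\ltimes\mathcal U$ to $G$ on $\mathcal U(\R)=\R^{r}$ is a bounded polynomial simply transitive action, and then invokes the Benoist--Dekimpe uniqueness theorem to transport the vanishing to an arbitrary such action. Your write-up merely makes explicit the module identifications and the role of the polynomial inverse of the conjugating diffeomorphism, which the paper leaves implicit.
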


\end{document}